\documentclass{amsart}
%---BEGIN PACKAGES---
\usepackage{amsmath,amssymb,amsthm}
\usepackage{graphicx}%arXiv
%\usepackage[dvips]{graphicx}
%==============================
%\renewcommand{\includegraphics}[2][]{}
%==============================
\usepackage{bm}%\bm{}．
\usepackage{braket}%\Set{}．
%\usepackage{color}
%\usepackage{lineno}
%\linenumbers
%---END:PACKAGES---

%---BEGIN:NEWTHEOREM---
\newtheorem{theorem}{Theorem}[section]
\newtheorem{proposition}[theorem]{Proposition}

\newtheorem{lemma}[theorem]{Lemma}
\theoremstyle{definition}
\newtheorem{definition}[theorem]{Definition}
\newtheorem{remark}[theorem]{Remark}
\newtheorem{example}[theorem]{Example}
%---END:NEWTHEOREM----

\numberwithin{equation}{section}%

%---BEGIN:NEWCOMMANDS---
\newcommand{\mgr}{MGR}%multiple group rack
\newcommand{\oss}{oriented spatial surface}
\newcommand{\diag}{Y-oriented diagram}
\newcommand{\surf}{oriented spatial surface with an $S^1$-orientation}
\newcommand{\surfs}{oriented spatial surfaces with $S^1$-orientations}
\newcommand{\Reidemeister}{Y-oriented Reidemeister moves on $S^2$}
%%%
\newcommand{\ccycl}[2]{\sum_{{p}=1}^{#1}\sum_{{q}=1}^{#2}\theta\left(\la\upsilon\star^{{p}-1}a\star^{{q}-1}{b}\ra\la{a}*^{{q}-1}{b}\ra\la{b}\ra\right)}%
%%%
\newcommand{\oline}[1]{\mathbin{\overline{#1}}}
\newcommand{\otilde}[1]{\mathbin{\widetilde{#1}}}
\newcommand{\PT}{\widetilde{\partial}}
\newcommand{\la}{\langle}
\newcommand{\ul}{\underline}
\newcommand{\ra}{\rangle}
\newcommand{\lr}[1]{\langle#1\rangle}
\newcommand{\col}{\operatorname{Col}}
\newcommand{\type}{\operatorname{type}}
%%%
\newcommand{\Z}{\mathbb{Z}}
\newcommand{\R}{\mathbb{R}}
\newcommand{\si}{c}
\newcommand{\Wai}{Y}
\newcommand{\ai}{i}
\newcommand{\jei}{j}
\newcommand{\kei}{k}
%---END:NEWCOMMANDS---

%---BEGIN:RE-NEWCOMMANDS---
%enumerate:(1), (2).
%---END:RE-NEWCOMMANDS---

%---BEGIN:SAGYOYOH---
%\renewcommand{\textit}[1]{\textcolor{red}{{\it#1}}}%remove this finally．
%\newcommand{\cyan}[1]{\textcolor{cyan}{#1}}
%\newcommand{\red}[1]{\textcolor{red}{{\boldmath {\bf \textit{#1}}}}}
%\newcommand{\gray}[1]{{\color[gray]{0.8}{#1}}}
%\newcommand{\comment}[1]{\begin{center}\fbox{\Large\textbf{#1}}\end{center}}
%\renewcommand\thefootnote{\textcolor{red}{\hspace{-1em}$\leftarrow$\arabic{footnote}}}
%---END;SAGYOYOH---

\makeatletter
\@namedef{subjclassname@2020}{\textup{2020} Mathematics Subject Classification}
\makeatother
\begin{document}

\title[Cohomology of racks and multiple group racks]{(Co)homology of racks and multiple group racks for compact oriented surfaces in the 3-sphere}
\author[S.~Matsuzaki]{Shosaku Matsuzaki}
\address[S.~Matsuzaki]{Liberal Arts Education Center, Ashikaga University, 268-1 Ohmae-cho, Ashikaga-shi, Tochigi 326-8558, Japan}
\email{matsuzaki.shosaku@g.ashikaga.ac.jp}
\author[T.~Murao]{Tomo Murao}
\address[T.~Murao]{Science and Technology Unit, Natural Sciences Cluster, Research and Education Faculty, Kochi University, 2-5-1 Akebono-cho, Kochi-shi, Kochi 780-8072, Japan}
\email{tmurao@kochi-u.ac.jp}
\keywords{rack; multiple group rack; \surf; cocycle invariant}
\subjclass[2020]{Primary~57K12, Secondary~57K10}
\thanks{This work was supported by JSPS KAKENHI (Grant Numbers: JP20K22312, JP21K13796).}
\maketitle

%%%%%%%%%%%%%
%%%%%%%%%%%%%
\begin{abstract}
We introduce (co)homology theory for multiple group racks and construct cocycle invariants of compact oriented surfaces in the 3-sphere using their 2-cocycles, where a multiple group rack is a rack consisting of a disjoint union of groups. We provide a method to find new rack cocycles and multiple group rack cocycles from existing rack cocycles and quandle cocycles. Evaluating cocycle invariants, we determine several symmetry types, caused by chirality and invertibility, of compact oriented surfaces in the 3-sphere.
\end{abstract}

%%%%%%%%%%%%%
%%%%%%%%%%%%%
%\tableofcontents
\section{Introduction}\label{SEC:intro}
In the study of low dimensional topology, it is important to understand the isotopy classes of (incompressible) surfaces embedded in a 3-manifold.
In particular, in knot theory, we can obtain rich information about a knot from its Seifert surface, which is an orientable surface whose boundary is the knot.
There are many studies about the isotopy classes of Seifert surfaces of knots:
any fibered knot has only a single isotopy type of minimal genus Seifert surfaces~\cite{BZ1967}, and A.~Hatcher and W.~Thurston~\cite{HT1985} classified the isotopy classes of minimal genus Seifert surfaces for 2-bridge knots, etc.
The abundance of Seifert surfaces later led O.~Kakimizu introduce a simplicial complex that records the structure of Seifert surfaces, whose vertices are isotopy classes of minimal genus Seifert surfaces of a given knot~\cite{K1992}.
The complex, called the Kakimizu complex, has been determined for prime knots of 10 or less crossings~\cite{K2005} by using the theory of sutured manifolds established by D.~Gabai~\cite{G1984}.
\par
In this paper, we study the isotopy classes of oriented compact surfaces embedded in $S^3$, called oriented spatial surfaces, which do not necessarily have the boundaries.
A closed spatial surface can be regarded as a generalization of a handlebody-knot by taking its boundary, where a handlebody-knot is a handlebody embedded in $S^3$, which is a generalization of a knot to higher genera.
It is known that if the complements of two knots are homeomorphic, then the knots are ambient isotopic up to their mirror images~\cite{GL1989}.
However, M.~Motto~\cite{M1990} gave infinitely many different handlebody-knots whose complements are homeomorphic by using a cut and paste argument and a study of mapping class groups of surfaces.
Further, we can quite easily have different spatial surfaces whose complements are homeomorphic.
For example, an annulus can be embedded in $S^3$ in infinite ways by only twisting it so that its complements are homeomorphic.
Consequently, we need powerful tools having more abundant information than the topology of the complement for classifications of spatial surfaces.
\par
D.~Joice~\cite{Joyce82} and S.~V.~Matveev~\cite{Matveev82} independently introduced a quandle, 
which is an algebra whose axioms correspond to the Reidemeister moves for knots.
A quandle yields various invariants of knots such as coloring numbers, (shadow) quandle  cocycle invariants, etc.
Similarly, other algebras: a rack~\cite{FennRourke92}, birack~\cite{FRS1993}, biquandle~\cite{FJK2004,FennRourkeSanderson95,KR2003}, multiple conjugation quandle~\cite{Ishii15-1}
and multiple conjugation biquandle~\cite{IIKKMO2018_1,IN2017} were introduced, whose axioms correspond to the Reidemeister moves for related topological objects:
knots, surface-knots, framed knots and handlebody-knots.
Recently, the first author~\cite{Matsuzaki19} introduced the Reidemeister moves of \oss s, and the authors and A.~Ishii~\cite{IMM20} extracted an algebra, called a multiple group rack (\mgr), from them.
Using \mgr s, they constructed an invariant called a coloring number and distinguished some \oss s.
\par
R.~Fenn et al.~\cite{FennRourkeSanderson95} introduced a chain complex for a rack, and J.~S.~Carter et al.~\cite{CJKLS2003} developed the (co)homology theory for a quandle.
The motivation of the theory was to construct invariants for knots and surface-knots.
According to the theory, we can obtain a powerful invariant, called a (shadow) quandle cocycle invariant, if a 2-cocycle of the chain complex is given.
Other (co)homology theory are also developed on biracks and biquandles~\cite{CES2004,CEGN2014}, multiple conjugation quandles~\cite{CIST2017} and multiple conjugation biquandles~\cite{IIKKMO2018,IIKKMO2021} 
to provide invariants of the corresponding topological objects (cf.~\cite{AS2005,CJKLS2003,IshiiIwakiriJangOshiro13}).
In this paper, we introduce (co)homology theory for \mgr s and construct (shadow) \mgr{} cocycle invariants of \oss s.
\par
Although it is not easy to find rack 2-cocycles or \mgr{} 2-cocycles in general, we introduce a way to construct a ``new'' rack 2-cocycle from an arbitrary rack 2-cocycle.
Further, we also introduce a way to construct an \mgr{} 2-cocycle from a rack 2-cocycle.
Calculating MGR cocycle invariants, we distinguish some \oss s which can never be distinguished by MGR coloring numbers.
Moreover, we consider several symmetries that are naturally caused by both chirality and invertibility of \oss s.
We give examples of \oss s with the symmetries, respectively.
\par
This paper is organized as follows.
In Section~\ref{SEC:def_of_sp_surf}, we recall the notion of \oss s and introduce $S^1$-orientations of them.
In Section~\ref{SEC_def_mgr}, we recall racks and \mgr s and introduce colorings for a diagram of \surfs.
In Section~\ref{sec:homology}, we introduce a (co)homology theory of \mgr s and define shadow \mgr{}  cocycle invariants.
In Section~\ref{SEC:2-COCYCLE_racks}, we recall a rack structure of $Q^n$ for a rack $Q$ 
and construct rack 2-cocycles of $Q^n$.
In Section~\ref{SEC:2-COCYCLE_mgr}, we provide a method to construct \mgr{} 2-cocycles from a given rack 2-cocycles.
In Section~\ref{SEC:SYMMETRIES}, we study five symmetries for \oss s and give a table of arbitrary genus \oss s with the symmetries by using our shadow \mgr{} cocycle invariants.

%%%%%%%%%%%%%
%%%%%%%%%%%%%
\section{An \surf}\label{SEC:def_of_sp_surf}
An \textit{\oss} is a compact oriented surface embedded in the 3-sphere $S^3=\R^3\cup\{\infty\}$.
Two \oss{s} $F$ and $F'$ are \textit{equivalent}, denoted by $F\cong{F}'$,
if there is an orientation-preserving self-homeomorphism $h$ of $S^3$ such that $h(F)=F'$,
where we note that the orientation of $h(F)$ also coincides with that of $F'$.
Unless otherwise noted, we suppose the following conditions.
\begin{itemize}
\item Each component of any \oss{} $F$ has non-empty boundary.
\item $F$ has no disk components.
\end{itemize}
The \oss{} obtained by reversing the orientation of $F$ is the \textit{inverse}\footnote{The inverse of $F$ has called  the reverse of $F$ in \cite{IMM20}.} of $F$,
denoted by $-F$.
For an orientation-reversing self-homeomorphism $r$ of $S^3$,
the \oss{} $r(F)$ is the \textit{mirror image} of $F$,
denoted by $F^*$.
In figures of this paper,
the front side and the back side of an \oss{} are colored by light gray and dark gray,
respectively (see Fig.~\ref{FIG:color_gray}).

		\begin{figure}[htbp]\centerline{
		\includegraphics{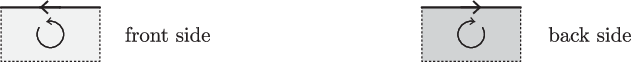}}
		\caption{The front side and the back side.}
		\label{FIG:color_gray}
		\end{figure}

A \textit{spatial trivalent graph} is a trivalent finite graph, whose vertices are of valency 3, embedded in $S^3$.
A trivalent graph may have a circle component, which has no vertices.
Let $D$ be a diagram of a spatial trivalent graph.
We obtain an \oss{} $F$ from $D$ by taking a regular neighborhood of $D$ in $S^2=\R^2\cup\{\infty\}$ and perturbing it around all crossings of $D$ according to its over/under information, and we give an orientation so that the front side of $F$ faces into the positive direction of the $z$-axis of $\R^3$ (see Fig.~\ref{FIG:construction_of_spatial_surface}).
Then we say that $D$ \textit{represents} the \oss{} $F$ and call $D$ a \textit{diagram} of $F$.
Any \oss{} is equivalent to an \oss{} obtained by the process (see~\cite{Matsuzaki19}).

		\begin{figure}[htbp]\centerline{
		\includegraphics{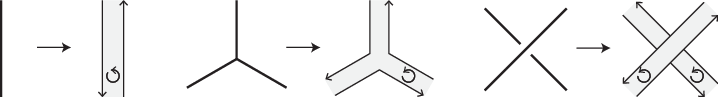}}
		\caption{The process for obtaining an \oss.}
		\label{FIG:construction_of_spatial_surface}
		\end{figure}

\begin{theorem}[\cite{Matsuzaki19}]\label{theorem_oriented_surface}
Two \oss{s} are equivalent if and only if their diagrams are related by finitely many Reidemeister moves on $S^2$ depicted in Fig.~\ref{FIG:spatial_surface_Reidemeister_move}.
\end{theorem}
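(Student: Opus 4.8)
The plan is to prove the two implications separately; the converse is where the work lies. For the ``if'' direction one checks that each of the local moves in Fig.~\ref{FIG:spatial_surface_Reidemeister_move} turns a diagram into a diagram of an equivalent \oss{}. Concretely, I would exhibit for each move an ambient isotopy of $S^3$ supported in a small ball that carries one local surface picture to the other and is the identity outside; since the surface attached to a diagram by the regular-neighborhood-and-perturbation recipe of Fig.~\ref{FIG:construction_of_spatial_surface} depends only on the diagram, such a local diagram move is realized by an isotopy of $F$. The only delicate point is orientations: every move must be verified to respect the transverse (front/back) orientation, so that the homeomorphism realizing it is orientation-preserving both on $S^3$ and on the surface. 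This is a finite, move-by-move verification.

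For the ``only if'' direction I would argue as follows. By the construction recalled just above the statement, every \oss{} is equivalent to one represented by a diagram, so we may take $F$ and $F'$ together with diagrams $D$ and $D'$, each $F$ being the band surface obtained by blackboard thickening from a trivalent spatial graph $G$ that is a spine of $F$. Suppose $F\cong F'$ via an orientation-preserving homeomorphism $h$ of $S^3$ taking the orientation of $F$ to that of $F'$. Since every orientation-preserving self-homeomorphism of $S^3$ is ambient isotopic to the identity, there is an ambient isotopy $\{h_t\}_{t\in[0,1]}$ through orientation-preserving homeomorphisms with $h_0=\mathrm{id}$ and $h_1=h$; then $\{h_t(F)\}$ is an ambient isotopy from $F$ to $F'$ and $\{h_t(G)\}$ an ambient isotopy of trivalent spatial graphs ending at a spine of $F'$. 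Two spines of the same surface are related by contractions and expansions of trees inside the surface, that is, by isotopies together with IH-type moves, all of which are among, or are generated by, the moves of Fig.~\ref{FIG:spatial_surface_Reidemeister_move}. Hence it suffices to convert the ambient isotopy of the spatial trivalent graph $G$ into a finite sequence of diagram moves.

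For that step I would invoke, or reprove along classical lines, the Reidemeister theorem for diagrams of trivalent spatial graphs with pliable vertices: putting the isotopy $\{h_t\}$ in general position with respect to the projection $S^3\setminus\{\infty\}\to\R^2$, the diagram changes only at finitely many parameter values, and at each of them the change is elementary --- the classical moves R2 and R3 in the part of the diagram away from vertices, the move sliding a band past a trivalent disk, the move rotating the three bands around a vertex, and the IH/contraction move. Under blackboard thickening the framing of each band is carried along automatically, so the only genuinely new feature compared with the spatial-graph case is that a move changing a band's framing (the classical R1) must be excluded, which is why Fig.~\ref{FIG:spatial_surface_Reidemeister_move} includes a writhe-preserving substitute for R1 rather than R1 itself. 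Concatenating the elementary changes expresses the passage from $D$ to a diagram of $F'$ as finitely many of the listed moves, and a final such sequence identifies that diagram with $D'$, completing the argument.

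The step I expect to be the main obstacle is the bookkeeping at trivalent vertices. One has to be sure the finite list of vertex-related moves (band-slide, vertex-rotation, IH/contraction) is complete, i.e.\ that every way a generic isotopy can drag the diagram across a disk-with-three-bands, and every change of spine preserving the surface, is generated by those moves, with the front/back coloring pinning down the local models up to them. An omission here would break the theorem, and proving completeness requires a careful general-position analysis of the interaction between the projection and the local model of a vertex --- the part of the argument that does not reduce to routine checking of diagrams.
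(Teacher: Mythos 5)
The paper itself gives no proof of this statement: it is imported verbatim from \cite{Matsuzaki19}, so there is no in-text argument to compare yours against, only the strategy of that reference. Your outline follows the route one would expect (and, at the level of strategy, the route of the cited paper): realize each listed move by a local ambient isotopy for the easy direction; for the converse, use triviality of the orientation-preserving mapping class group of $S^3$ to replace the homeomorphism by an ambient isotopy, track a trivalent spine through it, reduce to a general-position (Reidemeister-type) theorem for diagrams of blackboard-framed spatial trivalent graphs, account for the non-uniqueness of spines via isotopy-plus-IH (Whitehead) moves inside the surface, and explain why the classical R1 is replaced by a framing-preserving substitute. That is the correct architecture, and you correctly identify where the real work is.

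As written, however, this is a proof plan rather than a proof: the two statements carrying all the weight are asserted, not established. You flag the first yourself --- completeness of the finite list of local moves near a trivalent vertex requires an actual general-position analysis of how an isotopy of a disk-with-three-bands interacts with the projection, and an omitted move would falsify the theorem, which is precisely the content being claimed. The second is equally essential and is stated without justification: you need (i) that any two trivalent spines of the compact surface $F'$ (here $h(G)$ and the spine underlying $D'$) are related inside $F'$ by isotopy and IH moves, and (ii) that each such IH move, performed on an embedded surface in arbitrary position in $S^3$, is realized by the diagram moves of Fig.~\ref{FIG:spatial_surface_Reidemeister_move} after first bringing the relevant piece of the surface into diagram position --- which again funnels through (i.e.\ presupposes) the general-position step. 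A further small omission is the treatment of circle components of the spine (annulus components of $F$), for which the framed-link Reidemeister theorem is the relevant input. None of these points is wrong, but none is routine, so the proposal should be regarded as a correct strategy with the decisive lemmas left unproven; a complete argument is exactly what \cite{Matsuzaki19} supplies.
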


		\begin{figure}[htbp]\centerline{
		\includegraphics{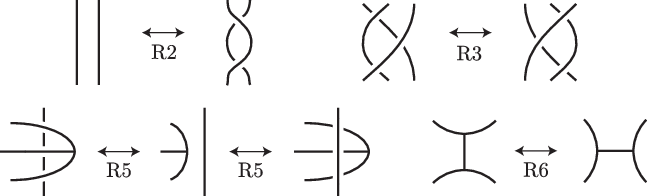}}
		\caption{The Reidemeister moves for \oss{s}.}
		\label{FIG:spatial_surface_Reidemeister_move}
		\end{figure}

An \textit{$S^1$-orientation} of an \oss{} $F$ is a collection of orientations of the cores of all annulus components of $F$.
Two \surfs{} $(F,o)$ and $(F',o')$ are \textit{equivalent},
denoted by $(F,o)\cong(F',o')$,
if there is an orientation-preserving self-homeomorphism $h$ of $S^3$
such that $h(F)=F'$ and $h$ sends $o$ to $o'$.
We regard an $S^1$-orientation $o$ of $F$ as an oriented link contained in $F$.
Then the inverse, denoted by $-o$,
is also an $S^1$-orientation of $F$,
and the mirror image, denoted by $o^*$, is an $S^1$-orientation of $F^*$.
For example, we have \surfs{} $(F,-o)$, $(-F,o)$, $(-F,-o)$, $(F^*,o^*)$, $(F^*,-o^*)$, $(-F^*,o^*)$ and $(-F^*,-o^*)$ from $(F,o)$ as illustrated in Fig.~\ref{fig:s1orientation}.

	\begin{figure}[htpb]\centerline{
	\includegraphics{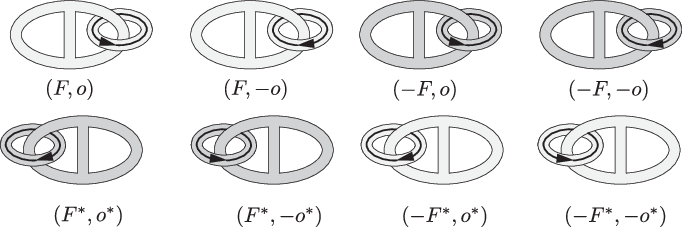}}
	\caption{Examples of \surf.}
	\label{fig:s1orientation}
	\end{figure}
	
\begin{remark}\label{REM:S1orientation}{\rm
We can regard a framed oriented link as an \surf,
since we can reverse the orientation of any annulus component $C$ by an isotopy in the regular neighborhood of $C$ while keeping its core orientation.
}\end{remark}

Let $D$ be a diagram of an \oss{} $F$.
A \textit{Y-orientation} of $D$ is a collection of orientations of all edges and circle components of $D$ without sources and sinks with respect to the orientation as shown in Fig.~\ref{fig:Y_orientations}.
Suppose that $D$ is equipped with a $Y$-orientation;
we obtain the $S^1$-orientation $o$ of $F$ that agrees with the Y-orientation of circle components of $D$.
Then we say that $D$ \textit{represents} the \surf{} $(F,o)$ and call $D$ a \textit{\diag} of $(F,o)$.
Any \surf{} is represented by a \diag.
\textit{Y-oriented Reidemeister moves} are the moves depicted in Fig.~\ref{FIG:spatial_surface_Reidemeister_move} between two \diag s that are identical except in the disk where the move is applied.

	\begin{figure}[htpb]\centerline{
	\includegraphics{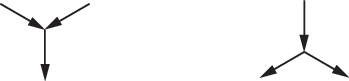}}
	\caption{Y-orientations.}
	\label{fig:Y_orientations}
	\end{figure}

\begin{proposition}[\cite{Ishii15-2}]\label{proposition_Y_orientation_relation}
Any two Y-orientations of a diagram of \oss s can be transformed into each other by finitely many \Reidemeister{}  and reversing the orientations of some circle components.
\end{proposition}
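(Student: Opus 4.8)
The plan is to strip off the circle components, reduce the edge-orientation question to a short list of purely local orientation changes, and then realise those by the moves of Fig.~\ref{FIG:spatial_surface_Reidemeister_move}. First I would dispose of the circle components: reversing the orientation of a circle component of $D$ is one of the two operations permitted in the statement and affects nothing else, so it suffices to treat the two given Y-orientations $\omega,\omega'$ after making them agree on every circle component. Treating each connected component of the underlying trivalent graph separately (the moves used on one component can be taken, after pushing the others aside by \Reidemeister{} that merely carry the orientation along, to avoid the rest of $D$), we are reduced to the following: $G$ is a connected finite trivalent graph, $D$ a diagram of a spatial trivalent graph with underlying graph $G$, and $\omega,\omega'$ are two orientations of the edges of $G$ having no source or sink; we must pass from $(D,\omega)$ to $(D,\omega')$ by \Reidemeister.

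Next I would isolate the combinatorial core. Call $\omega'$ obtained from $\omega$ by an \emph{elementary reversal} if it differs from $\omega$ exactly on (a) a single loop edge of $G$, or (b) a single non-loop edge $e$ directed in $\omega$ from a vertex $v$ having two outgoing edges to a vertex $w$ having two incoming edges; one checks at once that in both cases the result is again source/sink-free, and that (b) interchanges the local types at $v$ and at $w$. The claim is that any two source/sink-free orientations of a connected trivalent graph are joined by a finite sequence of elementary reversals. I would prove this by induction on the number of edges on which the two orientations disagree, using the elementary fact that a source/sink-free orientation of a trivalent graph with at least one vertex always has a vertex with two outgoing edges and a vertex with two incoming edges (all out-degrees lie in $\{1,2\}$ while their average is $3/2$), together with the observation that the disagreement set, at each vertex where the local types coincide, splits into coherently directed cycles. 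If some edge of the disagreement set is already eligible for a type-(a) or type-(b) reversal, apply it and the disagreement count strictly drops; otherwise one first runs a short sequence of elementary reversals along a path of disagreement edges to manufacture such an eligible edge and then applies it, with a net decrease. The induction is routine, though arranging this dichotomy so that progress is always possible requires some bookkeeping.

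It remains — and this is the decisive step, and the main obstacle — to realise each elementary reversal by finitely many \Reidemeister. Here the key point is that a Y-oriented Reidemeister move need not carry the orientation through unchanged inside the disk where it is performed: by definition the two \diag s are required only to be identical \emph{outside} that disk, so whenever a picture occurring on one side of a move of Fig.~\ref{FIG:spatial_surface_Reidemeister_move} admits more than one valid internal Y-orientation compatible with its boundary data, passing through the move and back flips the internal orientation while leaving the underlying diagram unchanged. For a type-(b) reversal I would first bring $v$ and $w$ adjacent by \Reidemeister{} that carry the orientation along, so that the configuration ``a vertex with two outgoing edges joined by $e$ to a vertex with two incoming edges'' sits inside a disk meeting the rest of $D$ in four arcs, then exhibit the explicit sequence of moves of Fig.~\ref{FIG:spatial_surface_Reidemeister_move} that flips $e$ inside that disk, and finally undo the preliminary moves; a type-(a) reversal is handled similarly with the loop edge and a disk meeting $D$ in two arcs. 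The obstacle is to carry this out for \emph{every} arrangement of the exterior arcs — a finite case analysis over the local pictures — while verifying that no source or sink is ever transiently created, so that all intermediate diagrams are genuine \diag s. Granting this, the proposition follows by composing the circle-component reversals set aside at the start with the \Reidemeister{} sequences realising the chain of elementary reversals that carries $\omega$ to $\omega'$ on the edges.
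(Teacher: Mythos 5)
First, a point of comparison: the paper does not prove this proposition at all --- it is imported verbatim from \cite{Ishii15-2}, so there is no in-paper argument to measure yours against. That said, your overall architecture is the natural one: dispose of circle components by the permitted reversals, reduce to a connectivity statement for source/sink-free orientations of a trivalent graph under local edge reversals, and realize each local reversal by performing a move of Fig.~\ref{FIG:spatial_surface_Reidemeister_move} and then undoing it. Your central observation --- that a Y-oriented Reidemeister move is only required to agree with the original diagram \emph{outside} the disk, so a move-and-inverse pair can flip the internal Y-orientation while leaving the diagram unchanged --- is correct and is indeed the mechanism that makes any proof of this statement work.

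However, both substantive steps are left unproved, and one of your sketches is wrong as stated. (1) For the combinatorial lemma, you propose to manufacture an eligible edge by elementary reversals ``along a path of disagreement edges,'' but there are configurations in which no disagreement edge can become eligible without first reversing an edge \emph{outside} the disagreement set. Concretely, in the cube graph orient both quadrilateral faces coherently and all four connecting edges toward the top face; let $\omega'$ reverse the top face only. Every top vertex then has out-degree $1$, so no edge of the disagreement cycle is type-(b) eligible, and the only way forward is to first reverse a connecting (agreement) edge, temporarily increasing the disagreement count, walk the reversal around the cycle, and then restore it. So the induction must be organized around coherent cycles with excursions through agreement edges, and ``net decrease'' needs an honest measure; as written the inductive step fails. (2) The realization of an elementary reversal by the moves of Fig.~\ref{FIG:spatial_surface_Reidemeister_move} is deferred entirely. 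One needs at least one explicit sequence (say, collapsing the configuration $v$--$e$--$w$ by an R5/R6 combination to a picture whose internal Y-orientation is forced by the boundary data, then re-expanding the other way), checked against every arrangement of the four exterior legs and with every intermediate diagram verified to be Y-orientable; some leg arrangements will require auxiliary moves first. Since this step is the actual content of the proposition, the proposal as it stands is an outline with the right shape rather than a proof.
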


The following theorem immediately follows from Theorem~\ref{theorem_oriented_surface} and Proposition~\ref{proposition_Y_orientation_relation}.

\begin{theorem}\label{theorem_oriented_surface_with_S1}
Two \surfs{} are equivalent if and only if their \diag{s} are related by finitely many \Reidemeister{}.
\end{theorem}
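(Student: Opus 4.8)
The plan is to reduce both implications to Theorem~\ref{theorem_oriented_surface} by transporting the $S^1$-orientation along a sequence of Reidemeister moves, using Proposition~\ref{proposition_Y_orientation_relation} to dispose of the auxiliary choice of orientations on the edges.

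For the ``if'' implication, suppose two \diag s $D$ and $D'$ are related by \Reidemeister. Forgetting the Y-orientations, $D$ and $D'$ are then related by Reidemeister moves on $S^2$, so Theorem~\ref{theorem_oriented_surface} shows that the underlying \oss s $F$ and $F'$ are equivalent. To see that the equivalence can be chosen compatibly with the $S^1$-orientations, I would note that a single Y-oriented Reidemeister move is supported in a disk and, by inspection of Figure~\ref{FIG:spatial_surface_Reidemeister_move}, leaves the orientations of all circle components unchanged; hence it is realized by an ambient isotopy of $S^3$ carrying the induced $S^1$-orientation along, and the composition of these isotopies gives an orientation-preserving self-homeomorphism of $S^3$ sending $(F,o)$ to $(F',o')$.

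For the ``only if'' implication, suppose $(F,o)\cong(F',o')$, and fix a \diag{} $D$ of $(F,o)$ and a \diag{} $D'$ of $(F',o')$. Since $F\cong F'$, Theorem~\ref{theorem_oriented_surface} supplies a finite sequence of Reidemeister moves on $S^2$ from $D$ to $D'$ (ignoring Y-orientations). Starting from the Y-orientation of $D$, I would propagate a Y-orientation through this sequence move by move: it is unchanged outside each move disk, and one checks from Figure~\ref{FIG:spatial_surface_Reidemeister_move} that it always extends across the disk to a Y-orientation of the new diagram, so that every step becomes a Y-oriented Reidemeister move. This produces a \diag{} $(D',\omega)$ joined to $D$ by \Reidemeister{} and representing some oriented spatial surface with an $S^1$-orientation $(F',\tilde o)$; by the ``if'' implication, $(F',\tilde o)\cong(F,o)\cong(F',o')$. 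It remains to join $(D',\omega)$ to $D'$ with its given Y-orientation by \Reidemeister. As these are two Y-orientations of the single diagram $D'$, Proposition~\ref{proposition_Y_orientation_relation} joins them by \Reidemeister{} together with reversals of orientations of some circle components; since such reversals can be slid past Y-oriented Reidemeister moves and collected at the end, and there their total effect on the circle components is trivial once we know $\tilde o=o'$, only \Reidemeister{} are needed. Concatenating yields the desired sequence of \Reidemeister{} from $D$ to $D'$.

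The main obstacle is precisely the equality $\tilde o=o'$: propagating a Y-orientation along an \emph{arbitrary} Reidemeister sequence provided by Theorem~\ref{theorem_oriented_surface} only guarantees $(F',\tilde o)\cong(F',o')$, and one must exclude that passing from $\tilde o$ to $o'$ genuinely requires reversing the core of some annulus component --- which is \emph{not} a \Reidemeister, and whose nontriviality for knotted or twisted annuli is exactly what makes the invertibility invariants of Section~\ref{SEC:SYMMETRIES} meaningful. I would resolve this by not taking the Reidemeister sequence arbitrarily: realize the equivalence $(F,o)\cong(F',o')$ by an ambient isotopy of $S^3$, so that $o$ is literally dragged onto $o'$, and run the argument of Theorem~\ref{theorem_oriented_surface} on that isotopy; then the Y-orientation propagated along the resulting Reidemeister sequence induces exactly $o'$ at the end, so $\tilde o=o'$ and the proof closes.
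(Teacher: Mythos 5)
Your proposal is correct and follows the same route the paper intends: the theorem is stated there as an immediate consequence of Theorem~\ref{theorem_oriented_surface} and Proposition~\ref{proposition_Y_orientation_relation}, and your argument is precisely a careful unpacking of that one-line proof, using Theorem~\ref{theorem_oriented_surface} to produce the move sequence and Proposition~\ref{proposition_Y_orientation_relation} to reconcile the two Y-orientations of the final diagram. You correctly identify and resolve the one point the paper glosses over --- that the circle-component reversals allowed in Proposition~\ref{proposition_Y_orientation_relation} must be shown to be unnecessary, which requires arranging $\tilde o = o'$ by running the Reidemeister sequence along an ambient isotopy realizing the equivalence.
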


%%%%%%%%%%%%%
%%%%%%%%%%%%%
\section{An \mgr{} coloring for \surfs{}}\label{SEC_def_mgr}
In this section, we recall a notion of multiple group racks.
A \textit{rack}~\cite{FennRourke92} is a non-empty set ${Q}$ with a binary operation $*:{Q}^2\to{Q}$ satisfying the following axioms.
\begin{itemize}
\item For any ${a}\in{Q}$,
the map $S_{{a}}:{Q}\to{Q}$ defined by $S_{{a}}({x})={x}*{a}$ is bijective.
\item For any ${a},{b},{c}\in{Q}$,
$({a}*{b})*{c}=({a}*{c})*({b}*{c})$.
\end{itemize}
A rack ${Q}$ is a \textit{quandle}~\cite{Joyce82,Matveev82} if ${a}*{a}={a}$ for any ${a}\in{Q}$.
For a rack ${Q}$, a \textit{$Q$-set} is a non-empty set $\Wai$ with a map $\star:\Wai\times{Q}\to\Wai;(\upsilon,a)\mapsto\upsilon\star a$ satisfying the following axioms.
\begin{itemize}
\item For any ${a}\in{Q}$,
the map $T_{{a}}:\Wai\to\Wai$ defined by $T_{{a}}(\upsilon)=\upsilon\star{a}$ is bijective.
\item For any $\upsilon\in\Wai$ and ${a},{b}\in{Q}$,
$(\upsilon\star{a})\star{b}=(\upsilon\star{b})\star({a}*{b})$.
\end {itemize}
A rack ${Q}$ itself is a ${Q}$-set with its binary operation:
$\upsilon\star{a}=\upsilon*{a}$ for any $\upsilon,{a}\in{Q}$.
Any singleton set $\{\upsilon\}$ is a ${Q}$-set with the map $\star$ defined by $\upsilon\star{a}=\upsilon$ for ${a}\in{Q}$, which we call a \textit{trivial} ${Q}$-set.
We denote $S_{{a}}^{{i}}({b})$ by $b*^{{i}}{a}$ for any ${a},{b}\in{Q}$ and ${i}\in\Z$, where we note that $S_{{a}}^0=\mathrm{id}_{{Q}}$.
We denote $T_{{a}}^{{i}}(\upsilon)$ by $\upsilon\star^{{i}}{a}$ for any $\upsilon\in\Wai$, ${a}\in{Q}$ and ${i}\in\Z$, where we note that ${T}_{{a}}^0=\mathrm{id}_{\Wai}$.
For any $\upsilon\in\Wai$, ${a},{a}_1,\ldots,{a}_n\in{Q}$ and ${i}_1,\ldots,{i}_n\in\Z$, we often abbreviate parentheses and write
${S}_{{a}_n}^{{i}_n}\circ\cdots\circ{S}_{{a}_1}^{{i}_1} ({a})$ by
${a}*^{{i}_1}{a}_1*^{{i}_2}\cdots*^{{i}_n}{a}_n$, and we write
${T}_{{a}_n}^{{i}_n}\circ\cdots\circ{T}_{{a}_1}^{{i}_1} ({\upsilon})$ by
${\upsilon}\star^{{i}_1}{a}_1\star^{{i}_2}\cdots\star^{{i}_n}{a}_n$.
\par
We define the \textit{type of a rack ${Q}$ with a ${Q}$-set $\Wai$} by
\[
\type{Q_{\Wai}}=\min\Set{n>0|
\begin{array}{l}
\text{${a}*^n{b}=a$ for any ${a},{b}\in{Q}$,}\\
\text{$\upsilon\star^n{c}=\upsilon$ for any $\upsilon\in\Wai$ and ${c}\in{Q}$}
\end{array}
},
\]
where we set $\min\emptyset:=\infty$.
We write $\type{{Q}_{\{\upsilon\}}}$ by $\type{{Q}}$ and call it the \textit{type of a rack ${Q}$},
where we note that $\type{Q}$ divides $\type{Q}_\Wai$.
For $q\in Q$, we define $\type q:=\min\Set{n>0|q*^nq=q}$.
If ${Q}$ and $\Wai$ are finite,
$\type{Q_{\Wai}}$ is finite since the set $\{S_{{a}}^n\mid{n}\in\Z\}\cup\{T_{{a}}^n\mid{n}\in\Z\}$ is finite for any ${a}\in{Q}$.
\par
We give some examples of racks and quandles.
We define the binary operation $*:\Z_n^2\to\Z_n$ by ${i}*{j}:=2{j}-{i}$ for any ${i},{j}\in\Z_n$,
where $\Z_n$ is the cyclic group $\Z/n\Z$ of order ${n}$.
Then $\Z_n$ is a quandle, called the \textit{dihedral quandle} of order ${n}$ and denoted by $R_{{n}}$.
Let ${R}$ be a ring and ${M}$ a left $R[t^{\pm1},s]/(s(t+s-1))$-module.
We define a binary operation $*:{M}^2\to{M}$ by $x*y:=tx+sy$ for any $x,y \in{M}$.
Then $M$ is a rack, called the \textit{$(t,s)$-rack}.
When $s=1-t$, the $(t,s)$-rack is called the \textit{Alexander quandle}.

\begin{definition}[\cite{IMM20}]\label{def:multiplegrouprack}
A \textit{multiple group rack (\mgr)} ${X}=\bigsqcup_{\lambda\in\Lambda}{G}_{\lambda}$ is a disjoint union of groups ${G}_{\lambda}$ ($\lambda\in\Lambda$) with a binary operation $*:{X}^2\to{X}$ satisfying the following axioms.
\begin{enumerate}
\item For any ${x}\in{X}$ and ${y}_1,{y}_2\in{G}_{\lambda}$,
${x}*({y}_1{y}_2)=({x}*{y}_1)*{y}_2$ and ${x}*e_{\lambda}={x}$,
where $e_{\lambda}$ is the identity of ${G}_{\lambda}$.
\item For any ${x},{y},{z}\in{X}$, $(x*y)*z=(x*z)*(y*z)$.
\item For any ${x}_1,{x}_2\in{G}_{\lambda}$ and ${y}\in{X}$,
$({x}_1{x}_2)*{y}=({x}_1*{y})({x}_2*{y})$,
where ${x}_1*{y}$, ${x}_2*{y}\in{G}_\mu$ for some $\mu\in\Lambda$.
\end{enumerate}
\end{definition}

An \mgr{} is a rack with the binary operation $*$.
An \mgr{} $X=\bigsqcup_{\lambda\in\Lambda}G_\lambda$ is called a \textit{multiple conjugation quandle}~\cite{Ishii15-1} if ${x}_1*{x}_2={x}_2^{-1}{x}_1{x}_2$ for any ${x_1},{x_2}\in{G}_\lambda$.
\par
An \textit{$X$-set of an \mgr{}} ${X}=\bigsqcup_{\lambda \in \Lambda}{G}_{\lambda}$ is a non-empty set $\Wai$ with a map $\star:\Wai\times{X}\to\Wai;(\upsilon,x)\mapsto \upsilon\star x$ satisfying the following axioms.
\begin{enumerate}
\item For any $\upsilon\in\Wai$ and ${x}_1,{x}_2\in{G}_{\lambda}$,
$\upsilon\star e_{\lambda}=\upsilon,\quad\upsilon\star({x}_1{x}_2)=(\upsilon\star{x}_1)\star{x}_2$,
where $e_\lambda$ is the identity of $G_\lambda$.
\item For any $\upsilon\in\Wai$ and ${x},{y}\in{X}$,
$(\upsilon\star{x})\star{y}=(\upsilon\star{y})\star({x}*{y})$.
\end{enumerate}
By the first axiom, the map $T_x:\Wai\to\Wai;\upsilon\to\upsilon\star{x}$ is bijective for any $x\in{X}$.
An \mgr{} $X=\bigsqcup_{\lambda\in\Lambda}G_{\lambda}$ itself is an $X$-set with its binary operation:
$\upsilon\star{x}=\upsilon*{x}$ for any $\upsilon,{x}\in{X}$.
Any singleton set $\{\upsilon\}$ is an ${X}$-set with the map $\star$ defined by $\upsilon\star{x}=\upsilon$ for ${x}\in{X}$, which is called a \textit{trivial} ${X}$-set.
The index set $\Lambda$ is an ${X}$-set with the map $\star$ defined by $\lambda\star{x}=\mu$ when $S_{{x}}({G}_{\lambda})\subset{G}_{\mu}$ for $\lambda$, $\mu\in\Lambda$ and ${x}\in{X}$.
\par
Let ${G}$ be a group with the identity $e$.
A \textit{${G}$-family of racks} \cite{IMM20} $({Q},\{*^{g}\}_{{g}\in{G}})$ is a set ${Q}$ with a family of binary operations $*^g:{Q}^2\to{Q}$ ($g\in{G}$) satisfying the following axioms.
\begin{itemize}
\item For any ${a},{b}\in{Q}$ and ${g},{h}\in{G}$,
${a}*^{{g}{h}}{b}=({a}*^g{b})*^h{b}\text{ and } {a}*^e{b}={a}$.
\item For any ${a},{b},{c}\in{Q}$ and ${g},{h}\in{G}$,
$({a}*^{g}{b})*^{h}{c}=({a}*^{h}{c})*^{{h}^{-1}{g}{h}}({b}*^{h}{c})$.
\end{itemize}
A $G$-family of racks $({Q},\{*^{g}\}_{{g}\in{G}})$ is called a \textit{$G$-family of quandles}~\cite{IshiiIwakiriJangOshiro13} if ${a}*^{g}{a}={a}$ for any ${a}\in{Q}$ and ${g}\in{G}$.
A ${G}$-family of racks $({Q},\{*^{{g}}\}_{{g}\in{G}})$ yields an \mgr{} $Q\times G=\bigsqcup_{{a}\in{Q}}\left(\{a\}\times{G}\right)$ with
\[
({a},{g})*({b},{h})=\left({a}*^{h}{b},{h}^{-1}{g}{h}\right),\quad
({a},{g})({a},{h})=({a},{g}{h})
\]
for any ${a},{b}\in{Q}$ and ${g},{h}\in{G}$ (see~\cite{IMM20}).
For a rack $Q$ and any $k\in\Z_{>0}\cup\{\infty\}$,
$(Q,\{*^{i}\}_{i\in\Z_{k\type{Q}}})$ is a $\Z_{k\type{Q}}$-family of racks, where we set $\Z_\infty=\Z$.
Then, we obtain the \mgr{} $Q\times\Z_{k\type{Q}}=\bigsqcup_{{a}\in{Q}}\left(\{{a}\}\times\Z_{k\type{Q}}\right)$, called the \textit{associated \mgr{}} of $Q$.

\begin{remark}\label{frequentlyused_mgr}
Let $\Wai$ be a $Q$-set and $Q\times\Z_{k\type{Q_\Wai}}$ the associated \mgr.
Then $\Wai$ is also a $(Q\times\Z_{k\type{Q_\Wai}})$-set with the map $\star:\Wai\times (Q\times\Z_{k\type{Q_\Wai}})\to \Wai$ defined by ${\upsilon}\star\left(a,i\right)={\upsilon}\star^i{a}$ for any $\upsilon\in\Wai$, $a\in Q$ and $i\in\Z$.
In this paper, we often use the \mgr{} $Q\times\Z_{k\type{Q_\Wai}}$ and the $Q\times\Z_{k\type{Q_\Wai}}$-set (see Proposition~\ref{prop_main_mgr_ccycle}).
\end{remark}

Let $D$ be a \diag{} of an \surf{} $(F,o)$.
We denote by $\mathcal{A}(D)$ the set of arcs of $D$, where an arc is a piece of a curve each of whose endpoints is an undercrossing or a vertex.
We denote by $\mathcal R(D)$ the set of complementary regions of $D$.
In this paper, a direction of an arc is often represented by the normal orientation, which is obtained by rotating the usual orientation counterclockwise by $\pi/2$ on the diagram.
\par
Let $X=\bigsqcup_{\lambda\in\Lambda}G_\lambda$ be an \mgr.
An \textit{$X$-coloring} of $D$ is a map $\si:\mathcal{A}(D)\to{X}$ satisfying the conditions (i), (ii) and (iii) depicted in Fig.~\ref{FIG:X_Y_coloring} at each crossing and vertex of $D$.
We denote by $\col_X(D)$ the set of $X$-colorings of $D$.
The cardinality $\#\col_X(D)$ is an invariant of $(F,o)$ \cite{IMM20}.
Let $\Wai$ be an $X$-set.
An \textit{$X_{\Wai}$-coloring} of $D$ is a map $\si:\mathcal A(D)\cup\mathcal R(D)\to X\cup \Wai$ such that the restriction $\si|_{\mathcal A(D)}$ is an $X$-coloring of $D$ and that the condition (iv) in Fig.~\ref{FIG:X_Y_coloring} is satisfied at any adjacent regions and the arc between them.
We denote by $\col_X(D)_{\Wai}$ the set of $X_{\Wai}$-colorings of $D$.

	\begin{figure}[htpb]\centerline{
	\includegraphics{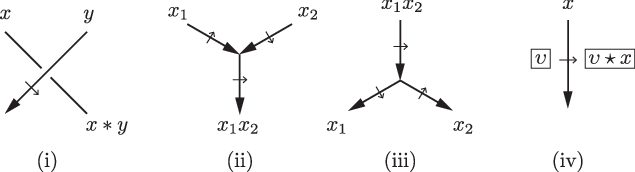}}
	\caption{Rules of a coloring, where $\upsilon\in\Wai$, $x,y\in{X}$ and ${x}_1,{x}_2\in{G}_{\lambda}$.}
	\label{FIG:X_Y_coloring}
	\end{figure}
	
\begin{proposition}\label{prop:coloring_invariant}
Let $X=\bigsqcup_{\lambda\in\Lambda}G_\lambda$ be an \mgr{} and let $\Wai$ be an $X$-set.
Suppose that $D$ is a \diag,
and that $D'$ is a \diag{} obtained by applying one of \Reidemeister{} to $D$ once.
For any $X_{\Wai}$-coloring $\si$ of $D$, there is a unique $X_{\Wai}$-coloring $\si'$ of $D'$ which coincides with $c$ except in the disk where the move is applied.
\end{proposition}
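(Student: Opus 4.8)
The plan is to verify the statement move by move over the finite list of \Reidemeister{} shown in Fig.~\ref{FIG:spatial_surface_Reidemeister_move}. The diagrams $D$ and $D'$ are identical outside the disk $\Delta$ where the move is applied, so the arcs and regions of $D$ meeting $S^2\setminus\operatorname{int}\Delta$ are canonically identified with those of $D'$; requiring $\si'$ to agree with $\si$ there forces $\si'$ on all of them, and in particular on every arc and region of $D'$ incident to $\partial\Delta$. Because the colouring conditions (i)--(iv) of Fig.~\ref{FIG:X_Y_coloring} are local and $D,D'$ agree outside $\Delta$, the only conditions that could fail for $\si'$ are those at the crossings, trivalent vertices, and pairs of adjacent regions lying inside $\Delta$. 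Hence it suffices to show that, given the colours on the arcs and regions crossing $\partial\Delta$, there is exactly one colouring of the arcs and regions contained in $\operatorname{int}\Delta$ making (i)--(iv) hold throughout $\Delta$.

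For uniqueness I would, in each move, propagate the colours from $\partial\Delta$ inward. Every newly created arc gets a forced colour: at a crossing, rule (ii) writes one colour as $S_y^{\pm1}$ of an adjacent one and $S_y$ is bijective by the rack axiom; at a trivalent vertex, rule (iii) writes the colour of one incident edge as a product, inverse, or quotient in the group $G_\lambda$ of the colours of the other two, with axioms (1) and (3) of an \mgr{} ensuring the resulting elements lie in the correct groups. Likewise every newly created region gets a forced colour, since crossing an arc coloured $x$ rule (iv) writes one region colour as $T_x^{\pm1}$ of its neighbour and $T_x$ is bijective. This determines $\si'$ on $\operatorname{int}\Delta$ uniquely. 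For existence one substitutes these forced values into those conditions inside $\Delta$ that were not used in the propagation and checks that each becomes an identity: for the moves involving neither a triple point nor a vertex these reduce to the invertibility of $S_a$ and $T_x$ together with axiom (1) of the \mgr{} and of the $X$-set, and for the moves in which a strand passes a trivalent vertex they reduce to \mgr{} axioms (1) and (3) (for the arc colours, via $(x_1x_2)*y=(x_1*y)(x_2*y)$ and $x*(y_1y_2)=(x*y_1)*y_2$) and to $X$-set axiom (1) (for the region colours, via $\upsilon\star(x_1x_2)=(\upsilon\star x_1)\star x_2$).

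The main obstacle is the Reidemeister-type move with a triple point. There the two ways of colouring the bottom strand force $(x*y)*z=(x*z)*(y*z)$, which is Definition~\ref{def:multiplegrouprack}(2), while the shadow colouring of the central region forces $(\upsilon\star x)\star y=(\upsilon\star y)\star(x*y)$, which is the second $X$-set axiom. Once the orientation and normal-direction conventions of Fig.~\ref{FIG:X_Y_coloring} are fixed these identities match exactly, so the check goes through, and all remaining moves are strictly simpler. For the arc part alone this recovers the invariance of $\#\col_X(D)$ established in \cite{IMM20}; the only genuinely new bookkeeping is for the region colours, and it reduces to the $X$-set axioms in precisely the same manner.
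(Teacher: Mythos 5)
Your proof is correct and follows essentially the same route as the paper: a move-by-move local verification that the colouring rules propagate uniquely and consistently, with the rack/\mgr{} axioms handling the arc colours and the $X$-set axioms handling the region colours. The paper merely compresses the arc part by citing Proposition~4.2 of \cite{IMM20} and observes that region colours are determined by the arc colours together with one region colour, whereas you carry out the verification directly; the mathematical content is the same.
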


\begin{proof}
Using the axiom of $\Wai$-sets, colors of regions are uniquely determined by those of arcs and one region.
For any $X$-coloring $\si$ of $D$, there is a unique $X$-coloring of $D'$ which coincides with $\si$ except in the disk where the move is applied (Proposition~4.2 in~\cite{IMM20}).
Therefore, the claim follows.
\end{proof}

By Theorem~\ref{theorem_oriented_surface_with_S1} and Proposition~\ref{prop:coloring_invariant},
the cardinality $\#\col_X(D)_{\Wai}$ is an invariant of $(F, o)$,
called the \textit{$X_Y$-coloring number} of $(F, o)$, and denoted by $\operatorname{col}_X(F,o)_\Wai$.
Unfortunately, this invariant is not essential since we have the equality $\#\col_X(D)_{\Wai}=\#\Wai\cdot\#\col_X(D)$.
Proposition~\ref{prop:coloring_invariant} is used in Proposition~\ref{prop:Y-orientation} to define a shadow \mgr{} cocycle invariant of $(F,o)$.

\begin{proposition}\label{prop:coloring_not_distinguish}
For an \surf{} $(F,o)$, it follows that
$\operatorname{col}_X(F,o)_{\Wai}=\operatorname{col}_X(-F^*,-o^*)_{\Wai}$,
where $X$ is an \mgr{} and $\Wai$ is an $X$-set.
\end{proposition}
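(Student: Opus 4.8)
The plan is to reduce the assertion to the case of ordinary (non-shadow) colourings and then to produce an explicit bijection of colouring sets. Since $\#\col_X(D)_{\Wai}=\#\Wai\cdot\#\col_X(D)$ for every \diag{} $D$, and $\#\col_X(D)$ is an invariant of the underlying \surf{} \cite{IMM20}, it suffices to prove $\#\col_X(D)=\#\col_X(D')$ for a \diag{} $D$ of $(F,o)$ and a \diag{} $D'$ of $(-F^*,-o^*)$ chosen below. (Treating the shadow colourings directly would be less convenient, since it would require comparing $\Wai$ with $\Wai$ under the action $\upsilon\star^{-1}x$, and these need not be isomorphic as $X$-sets.)

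For $D'$ I would take the diagram obtained from $D$ by switching every crossing and reversing every edge- and circle-orientation of the $Y$-orientation. This represents $(-F^*,-o^*)$: the reflection $(x,y,z)\mapsto(x,y,-z)$ of $S^3$ fixes $S^2$ pointwise, interchanges over/under at every crossing, and reverses the co-orientation of a surface lying close to $S^2$, so it sends $(F,o)$ to $(F^*,o^*)$; reversing in addition the co-orientation of the surface and the core orientations yields exactly the \surf{} represented, with the standard convention, by the switched-crossing diagram with its circle components reoriented, and reversing the edge-orientations as well alters the $Y$-orientation only within the class allowed by Proposition~\ref{proposition_Y_orientation_relation}.

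Next I would build a bijection $\Phi\colon\col_X(D)\to\col_X(D')$. The algebraic input is that group-inversion is compatible with the \mgr{} operation: the axioms yield $e_\lambda*y=e_\mu$, hence $S_{y^{-1}}=S_y^{-1}$ and $x^{-1}*y=(x*y)^{-1}$, and therefore $(x*^{\varepsilon}y)^{-1}=x^{-1}*^{\varepsilon}y$ for $\varepsilon=\pm1$; equivalently, $\iota(x)=x^{-1}$ is an isomorphism from $X$ onto the \mgr{} with rack operation $*^{-1}$ and the opposite multiplications on the groups $G_\lambda$. Because switching all crossings re-cuts the strands (an over-arc of $D$ becomes a broken under-strand of $D'$ and conversely), $\Phi$ cannot be defined arc by arc; instead I would work with the semi-arcs of $D$, which are the same as those of $D'$, and define the colour that $\Phi(c)$ assigns to an arc of $D'$ to be $\iota$ of the $c$-colour of a semi-arc at one end of that arc. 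One then checks, crossing by crossing and vertex by vertex, that $\Phi(c)$ is indeed an $X$-coloring of $D'$, and that the symmetric construction recovers $c$, so that $\Phi$ is bijective; together with the reduction above and Theorem~\ref{theorem_oriented_surface_with_S1} this gives the proposition.

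The hard part will be this last verification. The guiding principle — invert the colours and interchange the roles of over- and under-arcs — is simple, but making it precise forces one to fix the sign and normal-orientation conventions of the coloring rules of Fig.~\ref{FIG:X_Y_coloring} and to track how a crossing switch redistributes a given colouring among the re-cut arcs; this is a finite but delicate case analysis. Everything else — the reduction via $\#\col_X(D)_{\Wai}=\#\Wai\cdot\#\col_X(D)$, the identity $x^{-1}*y=(x*y)^{-1}$, and the appeal to Reidemeister invariance — is routine.
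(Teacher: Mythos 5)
Your overall strategy (exhibit a diagram of $(-F^*,-o^*)$ built from $D$ and then a bijection of coloring sets) is the right one, but the specific route you choose creates a problem that your sketch does not resolve, and the paper avoids it entirely by a different choice of diagram. The paper takes $D^*=r(D)$ for the planar reflection $r:(x,y)\mapsto(-x,y)$ and then reverses all arc orientations. Because $r$ is a homeomorphism of $S^2$ carrying $D$ onto $D^*$, arcs go to arcs, regions to regions, and no strand is re-cut; the reflection flips the left/right (normal-direction) conventions once and the orientation reversal flips them back, so assigning to each arc and region of $-D^*$ the \emph{same} color as its preimage is again an $X_\Wai$-coloring. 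This gives the bijection immediately, works directly for shadow colorings (so the reduction via $\#\col_X(D)_{\Wai}=\#\Wai\cdot\#\col_X(D)$ is unnecessary), and is also what makes Proposition~\ref{prop:minusinvers_localchainminus} a one-line consequence.

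The gap in your version is the well-definedness of $\Phi$. Switching every crossing moves the cut points of the strands from the old undercrossings to the old overcrossings, so an arc of $D'$ is a union of semi-arcs of $D$ whose $c$-colors are \emph{different}: they jump at the former undercrossings, which now lie in the interior of the new arc. Locally, the new over-arc at a former crossing consists of semi-arcs colored $x$ and $x*y$, so your rule ``$\iota$ of the $c$-colour of a semi-arc at one end'' yields $x^{-1}$ or $(x*y)^{-1}=x^{-1}*y$ depending on which end you pick, and these agree only when $x^{-1}$ is fixed by $S_y$. Fixing a preferred end does not rescue the construction: the coloring condition of $D'$ at that crossing constrains the single color of the new over-arc against both adjacent under-arc pieces, and the ``delicate case analysis'' you defer is precisely where the recipe fails as stated. (Your algebraic observations $S_{y^{-1}}=S_y^{-1}$ and $(x*y)^{-1}=x^{-1}*y$ are correct, and $\iota$ is indeed an isomorphism onto the dual structure; but that alone does not produce a well-defined map on arcs of the crossing-switched diagram. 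Any repair would have to transport colors along the new arcs by the accumulated rack action, which is essentially a new proof you have not given.) I recommend replacing the crossing-switched diagram with the reflected one, after which the bijection is the identity on colors.
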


\begin{proof}
Let $D$ be a \diag{} of $(F,o)$.
We may assume that $D\subset\mathbb R^2\subset S^2$.
Then $D^*:=r(D)$ is the \diag{} of $(F^*,o^*)$ for the map $r:\R^2\to\R^2;(x,y)\mapsto (-x,y)$.
The \diag{} $-D^*$ obtained by reversing the orientation of arcs of $D^*$ represents $(-F^*,-o^*)$.
For any $X_Y$-coloring $c$ of $D$, we can obtain the symmetric $X_Y$-coloring $c^*$ of $-D^*$ as illustrated in Fig.~\ref{FIG:X_Y_coloring_proof_mirror}.
Then $\#\col_X(D)_{\Wai}=\#\col_X(-D^*)_{\Wai}$.
\end{proof}

	\begin{figure}[htpb]\centerline{
	\includegraphics{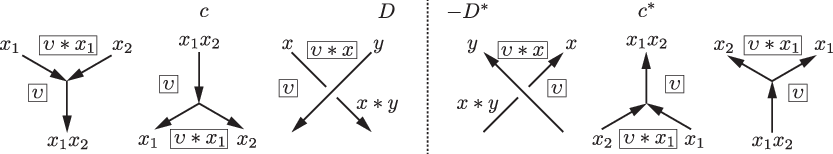}}
	\caption{An $X_Y$-colorings of $D$ and that of $-D^*$.}
	\label{FIG:X_Y_coloring_proof_mirror}	
	\end{figure}

We will distinguish $(F,o)$ and $(-F^*,-o^*)$ by using a cocycle invariant, introduced in Section~\ref{sec:homology}, for some \surf{} $(F,o)$.

%%%%%%%%%%%%%
%%%%%%%%%%%%%
\section{(Co)homology of an \mgr{} and shadow cocycle invariants of \surfs}\label{sec:homology}
We denote by $\Z[{X}]$ the free abelian group generated by a non-empty set ${X}$.
For a rack ${Q}$ and a ${Q}$-set $\Wai$, we define a free abelian group $C_n^{\mathrm{R}}({Q})_{\Wai}$ by
\[
C_n^{\mathrm{R}}({Q})_{\Wai}=
\begin{cases}
\displaystyle
\Z\left[
\Wai\times{Q}^n
\right]&(n\in\Z_{\ge0}),\\
0&(n\in\Z_{<0}).
\end{cases}
\]
We represent any element $\left(\upsilon;{a}_1,\ldots,{a}_n\right)\in\Wai\times{Q}^n$ by the noncommutative multiplication form $\la\upsilon\ra\la{a}_1\ra\cdots\la{a}_n\ra$.
We define the boundary homomorphism $\partial_{n}:C_{n}^{\mathrm{R}}({Q})_{\Wai}\to{C}_{n-1}^{\mathrm{R}}({Q})_{\Wai}$ by
\begin{align*}
\partial_{n}\left(\la\upsilon\ra\la{a}_1\ra\cdots\la {a}_{{n}}\ra\right)
&=\sum_{i=1}^{{n}}(-1)^{i}
\la\upsilon\ra\la{a}_1\ra\cdots\la{a}_{i-1}\ra\la{a}_{i+1}\ra\cdots\la{a}_{n}\ra\\
&-\sum_{i=1}^{{n}}(-1)^{i}
\la\upsilon\star{a}_{i}\ra\la{a}_1*{a}_{i}\ra\cdots\la{a}_{i-1}*{a}_{i}\ra\la{a}_{i+1}\ra\cdots\la{a}_{n}\ra
\end{align*}
for any generator $\la\upsilon\ra\la{a}_1\ra\cdots\la {a}_{n}\ra\in{C}_n({Q})_{\Wai}$.
Then $C_*^{\mathrm{R}}({Q})_{\Wai}:=(C_n^{\mathrm{R}}({Q})_{\Wai}, \partial_n)_{n\in\Z}$ is a chain complex, which is the \textit{rack chain complex} of $(Q,\Wai)$ (\cite{CJKLS1999, CJKLS2003, CJKS2001}).
For any abelian group $A$, the cochain complex $C^*_{\mathrm{R}}({Q};{A})_{\Wai}:=(C^n_{\mathrm{R}}({Q};{A})_{\Wai},\delta_n)_{n\in\Z}$ is defined in the ordinary way,
where $C^n_{\mathrm{R}}({Q};{A})_{\Wai}=\operatorname{Hom}_{\Z}(C_n^{\mathrm{R}}({Q})_{\Wai};A)$, and where the coboundary homomorphism $\delta_n$ sends a cochain $h\in{C}^n_{\mathrm{R}}({Q};{A})_{\Wai}$ into $h\circ\partial_{n+1}\in{C}^{n+1}_{\mathrm{R}}({Q};{A})_{\Wai}$.
An $n$-cocycle of $C^*_{\mathrm{R}}({Q};{A})_{\Wai}$ is called a \textit{shadow rack $n$-cocycle} of $(Q,\Wai)$.
If $Y$ is trivial, we often abbreviate $\Wai$ and $\upsilon\in\Wai$:
we write $C_*^{\mathrm{R}}(Q)_{\Wai}$ by $C_*^{\mathrm{R}}(Q)$,
$C^*_{\mathrm{R}}(Q;A)_{\Wai}$ by $C^*_{\mathrm{R}}(Q;A)$ and
$\la\upsilon\ra\la a_1\ra\cdots\la a_n\ra\in C_n(Q)_Y$ by $\la a_1\ra\cdots\la a_n\ra\in C_n(Q)$.
We often call an element of $C^n_{\mathrm{R}}(Q;A)$ a \textit{rack $n$-cocycle} of $Q$ simply.

\begin{remark}\label{REM:iikae_ccycle}{\rm
A rack $Q$ is a $Q$-set with $a\star b=a\ast b$ for any $a,b\in Q$.
We can identify an $n$-cocycle of $C^{n}_{\mathrm{R}}({Q};A)_Q$ with an $(n+1)$-cocycle of $C^{n+1}_{\mathrm{R}}({Q};A)$, since $C_{n}^{\mathrm{R}}(Q)_Q=\Z[Q\times Q^{n}]$ and $C_{n+1}^{\mathrm{R}}({Q})=\Z\left[\{\upsilon\}\times Q^{n+1}\right]$.
For example, a Mochizuki's 3-cocycle~\cite{M2005},
which is a 3-cocycle of $C^{3}_{\mathrm{R}}(R_p;\Z_p)$ of the dihedral quandle $R_p$ with the trivial $R_p$-set,
is identified with a 2-cocycle of $C^{2}_{\mathrm{R}}(R_p;\Z_p)_{R_p}$.
}\end{remark}

For an \mgr{} ${X}=\bigsqcup_{\lambda\in\Lambda}G_{\lambda}$ and an $X$-set $\Wai$,
we define a free abelian group $C_n(X)_{\Wai}$ by
\[
C_n({X})_{\Wai}=
\begin{cases}
\displaystyle
\Z\left[
\bigsqcup_{n_1+\cdots+n_k=n}
\Wai\times\prod_{i=1}^{k}
\biggl(\bigsqcup_{\lambda\in\Lambda}{G_\lambda}^{{n}_i}\biggr)
\right]&(n\in\Z_{\ge0}),\\
0      &(n\in\Z_{<0}).
\end{cases}
\]
Here, the indices $n_1,\ldots,n_k$ in the disjoint union run over all the partitions of $n$.
We represent
$\left(\upsilon;x_{1,1},\ldots,{x}_{1,n_1};\ldots;{x}_{k,1},\ldots,{x}_{k,n_k}\right)\in\Wai\times\bigsqcup_{\lambda\in\Lambda}{{G}_\lambda}^{n_1}\times\cdots\times\bigsqcup_{\lambda\in\Lambda}{{G}_\lambda}^{n_k}
$ by the noncommutative multiplication form $\la\upsilon\ra\la{x}_{1,1},\ldots,{x}_{1,n_1}\ra\cdots\la{x}_{k,1},\ldots,{x}_{k,n_k}\ra$.
For example,
\begin{align*}
  C_3(X)_{\Wai}
&=\Z\Biggl[
  \Bigl(\Wai
  \times\bigsqcup_{\lambda\in\Lambda}{G_\lambda}
  \times\bigsqcup_{\lambda\in\Lambda}{G_\lambda}
  \times\bigsqcup_{\lambda\in\Lambda}{G_\lambda}\Bigr)
  \sqcup
  \Bigl(\Wai
  \times\bigsqcup_{\lambda\in\Lambda}{G_\lambda}
  \times\bigsqcup_{\lambda\in\Lambda}{G_\lambda}^2\Bigr)\\
& \hspace{2em}\sqcup
  \Bigl(\Wai
  \times\bigsqcup_{\lambda\in\Lambda}{G_\lambda}^2
  \times\bigsqcup_{\lambda\in\Lambda}{G_\lambda}\Bigr)
  \sqcup
  \Bigl(\Wai
  \times\bigsqcup_{\lambda\in\Lambda}{G_\lambda}^3\Bigr)
  \Biggr]\\
&=\Z\Biggl[
  \Set{\la\upsilon\ra\la{x}\ra\la{y}\ra\la{z}\ra|
  \begin{array}{l}
  \text{$\upsilon\in\Wai$,}\\
  \text{${x},{y},{z}\in{X}$}
  \end{array}}
  \sqcup
  \Set{\la\upsilon\ra\la{x}\ra\la{y}_1,{y}_2\ra|
  \begin{array}{l}
  \text{$\upsilon\in\Wai$, ${x}\in{X}$}\\
  \text{$\lambda\in\Lambda$, ${y}_1,{y}_2\in{G}_{\lambda}$}
  \end{array}}\\
& \hspace{2em}\sqcup
  \Set{\la\upsilon\ra\la{x}_1,{x}_2\ra\la{y}\ra|
  \begin{array}{l}
  \text{$\upsilon\in\Wai$, ${y}\in{X}$,}\\
  \text{$\lambda\in\Lambda$, ${x}_1,{x}_2\in{G}_{\lambda}$}
  \end{array}}
  \sqcup
  \Set{\la\upsilon\ra\la{x}_1,{x}_2,{x}_3\ra|
  \begin{array}{l}
  \text{$\upsilon\in\Wai$, $\lambda\in\Lambda$,}\\
  \text{${x}_1,{x}_2,{x}_3\in{G}_{\lambda}$}
  \end{array}}
  \Biggr].
\end{align*}
\par
Let $\bm{x}$ be a sequence $x_1,\ldots,{x}_m$ of elements of $G_\lambda$ and let $y\in X$.
We denote by $\bm{x}*y$ the sequence $x_1*y,\ldots,{x}_m*y$.
The notation $\la\upsilon\ra\la\bm{x}_1\ra\cdots\la\bm{x}_k\ra*y$ means $\la\upsilon\star{y}\ra\la\bm{x}_1*y\ra\cdots\la\bm{x}_k*y\ra$ for $\la\upsilon\ra\la\bm{x}_1\ra\cdots\la\bm{x}_k\ra\in\Wai\times\bigsqcup{G}_\lambda^{n_1}\times\cdots\times\bigsqcup{G}_\lambda^{n_k}$.
We denote by $|\bm{x}|$ the length of $\bm{x}$, that is, $|\bm{x}|=m$ and set $|\la\upsilon\ra\la\bm{x}_1\ra\cdots\la\bm{x}_k\ra|=|\bm{x}_1|+\cdots+|\bm{x}_k|$.
We define $0*y=0$ for $0\in C_n({X})_{\Wai}$.
The notation
$
\la\upsilon\ra\la\bm{x}_{1}\ra\cdots\la\bm{x}_{{p}}\ra
\bigl(
*{w}\la\bm{y}_{0}\ra+\sum_{{i}=1}^{{q}}(-1)^{{i}}\la\bm{y}_{{i}}\ra
\bigr)
\la\bm{z}_{1}^{}\ra\cdots\la\bm{z}_{{r}}^{}\ra
$
means the element 
$
\left(\la\upsilon\ra\la\bm{x}_{1}\ra\cdots\la\bm{x}_{{p}}\ra*{w}\right)
\la\bm{y}_{0}\ra\la\bm{z}_{1}^{}\ra\cdots\la\bm{z}_{{r}}^{}\ra+\sum_{{i}=1}^{{q}}(-1)^{{i}}\la\upsilon\ra\la\bm{x}_{1}\ra\cdots\la\bm{x}_{{p}}\ra\la\bm{y}_{{i}}\ra
\la\bm{z}_{1}^{}\ra\cdots\la\bm{z}_{{r}}^{}\ra
$,
where $|\bm{y}_0|=|\bm{y}_i|$ for any $i$.
The notation
$
\la\upsilon\ra\la\bm{x}_{1}\ra\cdots\la\bm{x}_{{p}}\ra
\left(*{w}\la\ra-\la\ra\right)
\la\bm{z}_{1}^{}\ra\cdots\la\bm{z}_{{r}}^{}\ra
$
means the element 
$
(\la\upsilon\ra\la\bm{x}_{1}\ra\cdots\la\bm{x}_{{p}}\ra*{w})
\la\bm{z}_{1}^{}\ra\cdots\la\bm{z}_{{r}}^{}\ra
-\la\upsilon\ra\la\bm{x}_{1}\ra\cdots\la\bm{x}_{{p}}\ra\la\bm{z}_{1}^{}\ra\cdots\la\bm{z}_{{r}}^{}\ra.
$
\par
For $\la\bm{x}\ra=\la{x}_1,\ldots,{x}_m\ra$,
where ${x}_1,\ldots,{x}_m\in G_\lambda$,
we define an operator $\PT\la\bm{x}\ra$ by
\[
\PT\la\bm{x}\ra
=*{x}_1\la\bm{x}^{0}\ra+\displaystyle\sum_{{i}=1}^{{m}}(-1)^{{i}}\la\bm{x}^{i}\ra.
\]
Here, $\bm{x}^{i}$ is the sequence ${x}_1,\ldots,x_ix_{i+1},\ldots,{x}_m$ for any $i$ with $0<i<m$, $\bm{x}^{0}$ is the sequence ${x}_2,\ldots,{x}_m$, and $\bm{x}^{m}$ is the sequence ${x}_1,\ldots,{x}_{m-1}$.
We set $\la\bm{x}^{0}\ra=\la\bm{x}^{i}\ra=\la\ra$ if $m=1$.
\par
We define the boundary homomorphism $\partial_n:C_n(X)_{\Wai}\to C_{n-1}(X)_{\Wai}$ by
\[
\partial_n\left(\la\upsilon\ra\la\bm{x}_1\ra\cdots\la \bm{{x}}_{{k}}\ra\right)
=\begin{cases}
\displaystyle\sum_{i=1}^{{k}}(-1)^{|\la\upsilon\ra\la\bm{{x}}_1\ra\cdots\la\bm{{x}}_{{i-1}}\ra|}
\la\upsilon\ra\la\bm{{x}}_1\ra\cdots\PT\la\bm{{x}}_i \ra\cdots\la\bm{{x}}_{{k}}\ra&(n\in\Z_{>0}),\\
0& (n\in\Z_{\le0}).
\end{cases}
\]
where $\la\upsilon\ra\la\bm{x}_1\ra\cdots\la\bm{{x}}_{{k}}\ra$ is any generator of ${C}_n({X})_{\Wai}$, $n=|\la\upsilon\ra\la\bm{x}_1\ra\cdots\la\bm{{x}}_{{k}}\ra|$,
and where $|\la\upsilon\ra\la\bm{{x}}_1\ra\cdots\la\bm{{x}}_{{i-1}}\ra|=0$ when $i=1$.
We often write $\partial_n$ by $\partial$.

\begin{example}\label{example_partial_3}
For any generator $\la\upsilon\ra\la{x}\ra\la{y}\ra$ of $C_2({X})_{\Wai}$,
\begin{align*}
  \partial_2(\la\upsilon\ra\la{x}\ra\la{y}\ra)
&=\la\upsilon\ra\widetilde{\partial}\la{x}\ra\la{y}\ra
+(-1)^1\la\upsilon\ra\la{x}\ra\widetilde{\partial}\la{y}\ra\\
&=\la\upsilon\ra(*{x}\la\ra-\la\ra)\la y\ra
  -\la\upsilon\ra\la{x}\ra(*{y}\la\ra-\la\ra)\\
&=\la\upsilon\star{x}\ra\la{y}\ra
 +\la\upsilon\ra\la{x}\ra
 -\la\upsilon\ra\la{y}\ra
 -\la\upsilon\star{y}\ra\la{x}*{y}\ra.
\end{align*}
For any generator $\la\upsilon\ra\la{x}_{1},{x}_{2}\ra$ of $C_2({X})_{\Wai}$,
\begin{align*}
  \partial_2(\la\upsilon\ra\la{x}_{1},{x}_{2}\ra)
&=\la\upsilon\ra\widetilde{\partial}\la {x}_{1},{x}_{2}\ra\\
&=\la\upsilon\ra(*{x}_1\la{x}_2\ra-\la{x}_1{x}_2\ra+\la{x}_1\ra)\\
&=\la\upsilon\star{x}_1\ra\la{x}_2\ra
 +\la\upsilon\ra\la{x}_1 \ra
 -\la\upsilon\ra\la{x}_1{x}_2\ra.
\end{align*}
For any generator $\la\upsilon\ra\la{x}\ra\la{y}\ra\la{z}\ra$ of $C_3({X})_{\Wai}$,
\begin{align*}
\partial_3(\la\upsilon\ra\la{x}\ra\la{y}\ra\la{z}\ra)
&= \la\upsilon\ra\widetilde{\partial}\la{x}\ra\la{y}\ra\la{z}\ra
  +(-1)^1\la\upsilon\ra\la{x}\ra\widetilde{\partial}\la{y}\ra\la{z}\ra
  +(-1)^2\la\upsilon\ra\la{x}\ra\la{y}\ra\widetilde{\partial}\la{z}\ra\\
&=\la\upsilon\star{x}\ra\la{y}\ra\la{z}\ra
  +\la\upsilon\ra\la{x}\ra\la{z}\ra
  +\la\upsilon\star{z}\ra\la{x}*{z}\ra\la{y}*{z}\ra
  -\la\upsilon\ra\la{y}\ra\la{z}\ra
  -\la\upsilon\star{y}\ra\la{x}*{y}\ra\la{z}\ra
  -\la\upsilon\ra\la{x}\ra\la{y}\ra.
\end{align*}
For any generator $\la\upsilon\ra\la{x}\ra\la{y}_1,{y}_2\ra$ of $C_3({X})_{\Wai}$,
\begin{align*}
\partial_3(\la\upsilon\ra\la{x}\ra\la{y}_1,{y}_2\ra)
&=\la\upsilon\ra\widetilde{\partial}\la{x}\ra\la{y}_1,{y}_2\ra
  +(-1)^1\la\upsilon\ra\la{x}\ra\widetilde{\partial}\la{y}_1,{y}_2\ra\\
&=\la\upsilon\star{x}\ra\la{y}_1,{y}_2\ra
  +\la\upsilon\ra\la{x}\ra\la{y}_1{y}_2\ra
  -\la\upsilon\ra\la{y}_1,{y}_2\ra
  -\la\upsilon\star{y}_1\ra\la{x}*{y}_1\ra\la{y}_2\ra
  -\la\upsilon\ra\la{x}\ra\la{y}_1\ra.
\end{align*}
For any generator $\la\upsilon\ra\la{x}_1,{x}_2\ra\la{y}\ra$ of $C_3({X})_{\Wai}$,
\begin{align*}
\partial_3(\la\upsilon\ra\la{x}_1,{x}_2\ra\la{y}\ra)
&=\la\upsilon\ra\widetilde{\partial}\la{x}_1,{x}_2\ra\la{y}\ra
  +(-1)^2\la\upsilon\ra\la{x}_1,{x}_2\ra\widetilde{\partial}\la{y}\ra\\
&=\la\upsilon\star{x}_1\ra\la{x}_2\ra\la{y}\ra
  +\la\upsilon\ra\la{x}_1\ra\la y\ra
  +\la\upsilon\star{y}\ra\la{x}_1*{y},{x}_2*{y}\ra
  -\la\upsilon\ra\la{x}_1{x}_2\ra\la{y}\ra
  -\la\upsilon\ra\la{x}_1,{x}_2\ra.
\end{align*}
For any generator $\la\upsilon\ra\la{x}_1,{x}_2,{x}_3\ra$ of $C_3({X})_{\Wai}$,
\begin{align*}
\partial_3(\la\upsilon\ra\la{x}_1,{x}_2,{x}_3\ra)
&=\la\upsilon\ra\widetilde{\partial}\la{x}_1,{x}_2,{x}_3\ra\\
&=\la\upsilon\star{x}_1\ra\la{x}_2,{x}_3\ra
+\la\upsilon\ra\la{x}_1,{x}_2{x}_3\ra
-\la\upsilon\ra\la{x}_1{x}_2,{x}_3\ra
-\la\upsilon\ra\la{x}_1,{x}_2\ra.
\end{align*}
\end{example}

\begin{remark}\label{REM_leibniz}{\rm
When $n\ge1$, the boundary homomorphism $\partial_{n}$ is defined inductively on $k$ as follows.
We can check that easily.
\begin{itemize}
\item
$\partial_{{n}}\left(\la\upsilon\ra\la\bm{x}_1\ra\right)
=\la\upsilon\ra\widetilde{\partial}\la\bm{x}_1\ra$,
where ${n}=|\bm{x}_1|$.
\item
$\partial_{{n}}\left(\la\upsilon\ra\la\bm{x}_1\ra\cdots\la\bm{x}_{k+1}\ra\right)
=\partial_{{m}}\left(\la y\ra\la\bm{x}_1\ra\cdots\la\bm{x}_k\ra\right)\la\bm{x}_{{k}+1}\ra
+(-1)^{{m}}\la\upsilon\ra\la\bm{x}_1\ra\cdots\la\bm{x}_k\ra\widetilde{\partial}\la\bm{x}_{k+1}\ra$,
where $m=|\la\upsilon\ra\la\bm{x}_{1}\ra\cdots\la\bm{x}_{k}\ra|$ and
${n}=m+|\bm{x}_{{k}+1}|$ (Leibniz rule).
\end{itemize}
}\end{remark}

The following lemma is an essential part of the proof of Proposition~\ref{prop_partialpartialzero}.
\begin{lemma}\label{LEM_zeromap}
Let $w=\la\upsilon\ra\la\bm{x}_1\ra\cdots \la\bm{x}_k\ra$ be a generator of ${C}_{{n}}(X)_{\Wai}$ for $n\ge0$.
Then, we have the followings.
\begin{enumerate}
\item
$\partial\left({w}*{x}\right)=\partial({w})*{x}$ for any $x\in X$.
\item
$\partial({w}\PT\la\bm{x}\ra)=\partial({w})\PT\la\bm{x}\ra$ for a sequence $\bm{x}={x}_1,\ldots,{x}_m$ of elements of $G_\lambda$.
\end{enumerate}
\end{lemma}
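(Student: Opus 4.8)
The plan is to prove (1) by induction on the number $k$ of blocks in $w=\la\upsilon\ra\la\bm{x}_1\ra\cdots\la\bm{x}_k\ra$, peeling off the last block via the Leibniz rule of Remark~\ref{REM_leibniz}, and then to deduce (2) from (1) together with a single ``$\PT$ squares to zero'' computation. The inductive step and the deduction are both formal; the substance is in two one-block identities, where the \mgr{} and $X$-set axioms enter. Throughout, one uses that by the third \mgr{} axiom (Definition~\ref{def:multiplegrouprack}), $S_x$ carries each $G_\lambda$ into a single $G_\mu$, so $\bm{x}_i*x$ is again a sequence from one group and $w*x$ and $w\,\PT\la\bm{x}\ra$ are well-defined; the degenerate cases in which a block has length $1$ (so that $\PT$ produces the empty block $\la\ra$) are covered by the conventions fixed just before the lemma.

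For (1), I would first check the one-block identity
\[
(u*x)\,\PT\la\bm{b}*x\ra=\bigl(u\,\PT\la\bm{b}\ra\bigr)*x,
\]
valid for an arbitrary generator $u$, an arbitrary sequence $\bm{b}=b_1,\ldots,b_\ell$ of elements of one $G_\mu$, and an arbitrary $x\in X$. It follows by writing out $\PT\la\bm{b}\ra$ and $\PT\la\bm{b}*x\ra$ from the definition and matching the two sides term by term: the leading terms agree because $(\upsilon\star x)\star(b_1*x)=(\upsilon\star b_1)\star x$ by the second $X$-set axiom and $(a*x)*(b_1*x)=(a*b_1)*x$ by the second \mgr{} axiom, applied to the head of $w$ and to every entry of the blocks; the merge terms agree because $(b_ib_{i+1})*x=(b_i*x)(b_{i+1}*x)$ by the third \mgr{} axiom; and the deletion terms agree trivially. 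Granting this, (1) follows by induction on $k$: the case $k=0$ is $0=0$ because $\partial$ vanishes on $C_0(X)_{\Wai}$; for the step, write $w=w'\la\bm{x}_k\ra$, apply the Leibniz rule to $\partial(w)$ and to $\partial(w*x)=\partial\bigl((w'*x)\la\bm{x}_k*x\ra\bigr)$, use the inductive hypothesis for $w'$ on the terms carrying $\partial(w')$, and use the one-block identity (with $u=w'$, $\bm{b}=\bm{x}_k$) on the terms carrying $\PT$.

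For (2), write $\bm{x}=x_1,\ldots,x_m$ with all $x_i\in G_\lambda$ and recall $w\,\PT\la\bm{x}\ra=(w*x_1)\la\bm{x}^0\ra+\sum_{i=1}^m(-1)^i w\la\bm{x}^i\ra$, a sum of words each equal to $w$ or $w*x_1$ followed by a single block of length $m-1$. Applying the Leibniz rule to $\partial$ of each summand and then invoking (1) in the form $\partial(w*x_1)=\partial(w)*x_1$, one regroups the outcome as
\[
\partial\bigl(w\,\PT\la\bm{x}\ra\bigr)=\partial(w)\,\PT\la\bm{x}\ra+(-1)^{n}\Bigl((w*x_1)\,\PT\la\bm{x}^0\ra+\sum_{i=1}^m(-1)^i w\,\PT\la\bm{x}^i\ra\Bigr),
\]
so it remains to see that the parenthesized term $E$ vanishes. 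But $E$ is exactly $\PT$ applied a second time to the already reduced block, carrying the signs inherited from the first $\PT$; that is, $E$ is the composite $\PT\circ\PT$ acting on the single block $\la\bm{x}\ra$, with the prefix $w$ inert. This is the assertion that the bar-type differential $\PT$ squares to zero: writing $\PT$ as the alternating sum of its faces --- the $0$th sending a prefix $v$ to $v*x_1$, the $i$th multiplying $x_i$ and $x_{i+1}$ for $1\le i\le m-1$, the $m$th deleting $x_m$ --- one checks the simplicial identities, for which the associativity $(w*x_1)*x_2=w*(x_1x_2)$ and $(\upsilon\star x_1)\star x_2=\upsilon\star(x_1x_2)$ of the prefix action is the first \mgr{} axiom and the first $X$-set axiom, while associativity of multiplication in $G_\lambda$ handles the remaining ones. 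The signs then pair the terms of $E$ and cancel them, so $E=0$ and (2) follows; the case $m=1$ degenerates and (2) is then immediate from (1).

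The one genuinely non-formal ingredient is the identity $E=0$ of Part (2) --- equivalently $\partial^2=0$ on a single-block word, which is the seed from which Proposition~\ref{prop_partialpartialzero} is obtained through the Leibniz rule. There the difficulty is purely bookkeeping: organizing the double sum over pairs of faces of $\PT$ and verifying that the product of the two sign factors sorts the terms into cancelling pairs. Everything else --- Part (1), the reduction of (2), and the one-block identity used in (1) --- is a straightforward, if somewhat lengthy, application of the \mgr{} and $X$-set axioms, with the third \mgr{} axiom guaranteeing at each step that $S_x$ respects the block structure.
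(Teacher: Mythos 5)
Your proposal is correct and follows essentially the same route as the paper: part (1) rests on the same one-block commutation identity $(u*x)\,\PT\la\bm{b}*x\ra=(u\,\PT\la\bm{b}\ra)*x$ (the paper applies it directly to each block of $w$ rather than inducting on $k$, a purely cosmetic difference), and part (2) is exactly the paper's argument, namely the claim $w\,\PT\PT\la\bm{x}\ra=0$ --- your term $E$, which the paper cancels by the same pairing of merge terms plus the identity $w*x_1*x_2=w*(x_1x_2)$ from the first \mgr{} and $X$-set axioms --- followed by the Leibniz rule and part (1). Your explicit invocation of the third \mgr{} axiom for the merge terms $(b_ib_{i+1})*x=(b_i*x)(b_{i+1}*x)$ is accurate (the paper uses it implicitly).
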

\begin{proof}
(1)
If $k=0$, the equation clearly holds.
Then we assume that $k\ge1$.
This statement follows from the equation:
\begin{align*}
&\phantom{=}\la\upsilon\star x\ra\la\bm{x}_1*x\ra\cdots\PT\la\bm{x}_i*x\ra\cdots\la\bm{x}_k*x\ra\\
&=\la\upsilon\star x\ra\la\bm{x}_1*x\ra\cdots\bigl(*(x_{i1}*x)\la{\bm{x}_i}^0*x\ra+
\sum_{j=1}^{|\bm{x}_i|}(-1)^j
\la{\bm{x}_i}^j*x\ra\bigr)\cdots\la\bm{x}_k*x\ra\\
&=\Bigl(\la\upsilon\ra\la\bm{x}_1\ra\cdots\bigl(*x_{i1}\la{\bm{x}_i}^0\ra+
\sum_{j=1}^{|\bm{x}_i|}(-1)^j
\la{\bm{x}_i}^j\ra\bigr)\cdots\la\bm{x}_k\ra\Bigl)*x\\
&=(\la\upsilon\ra\la\bm{x}_1\ra\cdots\PT\la\bm{x}_i\ra\cdots\la\bm{x}_k\ra)*x
\end{align*}
for any $i$ with $1\le i\le k$,
where $x_{i1}$ means the first element of $\bm{x}_i$, and where we note $\la{\bm{x}_i}^j*x\ra=\la{\bm{x}_i}^0*x\ra=\la\ra$ if $|\bm{x}_i|=1$.
The second equation above follows from the axiom (2) of \mgr{s} and the axiom (2) of $X$-sets.
\par
(2)
Suppose that $m=1$.
We have
$\partial\bigl({w}\PT\la{x}_1\ra\bigr)
=\partial\left({w}*{x}_1-{w}\right)
=\partial({w})*{x}_1-\partial({w})
=\partial({w})\PT\la{x}_1\ra$,
where the second equation follows from Lemma~\ref{LEM_zeromap} (1).

We assume that $m\ge2$.
We show the claim $(*)$ ${w}\,\PT\PT\la\bm{x}\ra=0$, where we set
$w\PT\PT\la\bm{x}\ra
=\left({w}*{x}_1\right)\PT\la\bm{x}^0\ra+\sum_{i=1}^{m}(-1)^iw\,\PT\la\bm{x}^i\ra$.
We put $\la\bm{x}^{ij}\ra=\la\left(\bm{x}^i\right)^j\ra$ for any $i,j\in\Z_{\ge0}$ with $i\le m$ and $j<m$.
Then,
{\normalsize
\begin{align*}
{w}\PT\PT\la\bm{x}\ra
&=({w}*{x}_1)
\bigl(*x_2\la\bm{x}^{00}\ra+\sum_{j=1}^{m-1}(-1)^j\la\bm{x}^{0j}\ra\bigr)
+\sum_{i=1}^{m}\Bigl((-1)^i{w}
\bigl(*x_1^i\la\bm{x}^{i0}\ra+\sum_{j=1}^{m-1}(-1)^j\la\bm{x}^{ij}\ra\bigr)\Bigr)\\
&=\ul{\Bigl(w*x_1*x_2\la\bm{x}^{00}\ra+\sum_{j=1}^{m-1}(-1)^j\,w*x_1\la\bm{x}^{0j}\ra\Bigr)+\Bigl(-w*x_1x_2\la\bm{x}^{10}\ra+\sum_{i=2}^{m}(-1)^i\,w*x_1\la\bm{x}^{i0}\ra\Bigr)}_{(A)}\\
&\phantom{=}+\sum_{i=1}^{m-1}\ul{\Bigl(\sum_{j=i}^{m-1}(-1)^{i+j}\,w\la\bm{x}^{ij}\ra+\sum_{j=i}^{m-1}(-1)^{(j+1)+i}w\la\bm{x}^{(j+1)i}\ra\Bigr)}_{(B)}.
\end{align*}
}
If $i\le j$, $\la\bm{x}^{ij}\ra=\la\bm{x}^{(j+1)i}\ra$ by the definition of $\la\bm{x}^{ij}\ra$.
Then, $(A)=(w*x_1*x_2-w*x_1x_2)\la\bm{x}^{00}\ra$ and $(B)=0$.
By the axiom (1) of \mgr{s} and the axiom (1) of $X$-sets, $(A)=0$.
Therefore, ${w}\,\PT\PT\la\bm{x}\ra=0$. 
Then,
\begin{align*}
\partial({w}\PT\la\bm{x}\ra)
&=\partial\left({w}*{x}_1{\la\bm{x}^0\ra}\right)
  +\sum_{{i}=1}^{{m}}(-1)^i\partial\left({w}{\la\bm{x}^i\ra}\right)\\
&=\left(\partial({w}*{x}_1){\la\bm{x}^0\ra}+(-1)^{{n}}({w}*{x}_1)\widetilde{\partial}{\la\bm{x}^0\ra}\right)
+\sum_{{i}=1}^{{m}}(-1)^{{i}}\left(\partial({w}){\la\bm{x}^i\ra}
+(-1)^n{w}\widetilde{\partial}{\la\bm{x}^i\ra}\right)\\
&=\partial({w})*x_1{\la\bm{x}^0\ra}+\sum_{{i}=1}^{{m}}(-1)^{{i}}\partial({w}){\la\bm{x}^i\ra}+(-1)^{n}{w}\widetilde{\partial}\PT\la\bm{x}\ra\\
&=\partial({w})\PT\la\bm{x}\ra.
\end{align*}
Here, the second equation follows from Remark~\ref{REM_leibniz},
the third equation follows from Lemma~\ref{LEM_zeromap} (1),
and the fourth equation follows from the claim $(*)$  above.
\end{proof}

\begin{proposition}\label{prop_partialpartialzero}
$(C_n(X)_{\Wai},\partial_n)_{n \in\Z}$ is a chain complex.
\end{proposition}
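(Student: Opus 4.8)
The plan is to prove $\partial_{n-1}\circ\partial_n=0$. Since $\partial$ and $\partial\circ\partial$ are $\Z$-linear, it suffices to verify this on a generator $w=\la\upsilon\ra\la\bm{x}_1\ra\cdots\la\bm{x}_k\ra$ of $C_n(X)_{\Wai}$, and I would argue by induction on the number $k$ of blocks. For $k=0$ the generator $\la\upsilon\ra$ has degree $0$, so $\partial\la\upsilon\ra=0$ and hence $\partial\partial\la\upsilon\ra=0$; this is the base case.

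For the inductive step, suppose $\partial\partial$ vanishes on every generator with at most $k$ blocks, and write $w'=\la\upsilon\ra\la\bm{x}_1\ra\cdots\la\bm{x}_{k+1}\ra=w\la\bm{x}_{k+1}\ra$ with $w=\la\upsilon\ra\la\bm{x}_1\ra\cdots\la\bm{x}_k\ra$ of degree $m$. By the Leibniz rule of Remark~\ref{REM_leibniz}, $\partial(w')=\partial(w)\la\bm{x}_{k+1}\ra+(-1)^{m}w\PT\la\bm{x}_{k+1}\ra$. Applying $\partial$ once more, I would treat the first summand by the Leibniz rule again --- peeling off the last block $\la\bm{x}_{k+1}\ra$ and using that $\partial(w)$ is a $\Z$-combination of generators of degree $m-1$ --- to get $\partial\bigl(\partial(w)\la\bm{x}_{k+1}\ra\bigr)=\partial\partial(w)\la\bm{x}_{k+1}\ra+(-1)^{m-1}\partial(w)\PT\la\bm{x}_{k+1}\ra$, and the second summand by Lemma~\ref{LEM_zeromap}(2), which gives $\partial\bigl(w\PT\la\bm{x}_{k+1}\ra\bigr)=\partial(w)\PT\la\bm{x}_{k+1}\ra$. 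Adding, $\partial\partial(w')=\partial\partial(w)\la\bm{x}_{k+1}\ra+\bigl((-1)^{m-1}+(-1)^{m}\bigr)\partial(w)\PT\la\bm{x}_{k+1}\ra$; the first term is $0$ by the inductive hypothesis (each generator appearing in $\partial(w)$ has at most $k$ blocks) and the bracket is $0$, so $\partial\partial(w')=0$.

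I expect the only delicate point to be the sign and degree bookkeeping in the two nested uses of the Leibniz rule --- in particular recording that $\partial(w)$ lies in degree $m-1$, which is what makes $(-1)^{m-1}$ appear and cancel $(-1)^m$, and noting that the Leibniz rule is always applied so as to split off the \emph{final} block, so that the inductive hypothesis covers every intermediate term. All the genuinely algebraic content has already been isolated in Lemma~\ref{LEM_zeromap}: the compatibility of $\partial$ with right multiplication by $x\in X$ and with the operator $\PT$, and --- crucially --- the identity $w\PT\PT\la\bm{x}\ra=0$ proved as the claim $(*)$ inside its proof, which is exactly where the \mgr{} axiom (1) and the $X$-set axiom (1) are invoked. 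Relative to that lemma, the present proposition is the formal, bookkeeping part.
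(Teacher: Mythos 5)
Your proof is correct and follows essentially the same route as the paper's: induction on the number of blocks $k$, with the Leibniz rule of Remark~\ref{REM_leibniz} applied twice and Lemma~\ref{LEM_zeromap}~(2) supplying the cancellation $(-1)^{m-1}\partial(w)\PT\la\bm{x}_{k+1}\ra+(-1)^{m}\partial(w)\PT\la\bm{x}_{k+1}\ra=0$. The only (harmless) cosmetic differences are that you start the induction at $k=0$ rather than $k=1$ and that your sign bookkeeping via $m=|w|$ is slightly more explicit than the paper's.
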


\begin{proof}
It suffices to show $\partial\circ\partial\left({w}\right)=0$ for any generator $w={\la\upsilon\ra\la\bm{x}_1\ra\cdots\la\bm{x}_{{k}}\ra}$ of $C_{{n}}(X)_\Wai$.
The equation clearly holds if $n\le1$.
Then we assume that $n\ge2$.
We show the equation by an induction on $k$.
\par
In the case where $k=1$, we have 
$\partial\circ\partial\left(w\right)
=\partial(\la\upsilon\ra\widetilde{\partial}\la\bm{x}_1\ra)
=\partial(\la\upsilon\ra)\PT\la\bm{x}_1\ra
=0$,
where the second equation follows from Lemma~\ref{LEM_zeromap} (2).
\par
We assume that $\partial\circ\partial({w})=0$;
we show $\partial\circ\partial({w}\la\bm{x}_{k+1}\ra)=0$.
\begin{align*}
\partial\circ\partial({w}\la\bm{x}_{k+1}\ra)
&=\partial\left(\partial({w})\la\bm{x}_{{k}+1}\ra
+(-1)^{{n}}{w}\widetilde{\partial}\la\bm{x}_{{k}+1}\ra\right)\\
&=\left(\partial(\partial({w}))\la\bm{x}_{{k}+1}\ra
+(-1)^{{n}-1}\partial({w})\widetilde{\partial}\la\bm{x}_{{k}+1}\ra\right)
+(-1)^{{n}}\partial\left({w}\widetilde{\partial}\la\bm{x}_{{k}+1}\ra\right)\\
&=\partial\circ\partial({w})\la\bm{x}_{{k}+1}\ra\\
&=0.
\end{align*}
Here, the first and second equations follow from Remark~\ref{REM_leibniz},
the third equation follows from Lemma~\ref{LEM_zeromap} (2),
and fourth equation follows from the assumption of the induction.
\end{proof}

We call $C_*(X)_{\Wai}:=(C_n(X)_{\Wai},\partial_n)_{n \in\Z}$ the \textit{\mgr{} chain complex} of $(X,\Wai)$.
We denote by $H_n(X)_{\Wai}$ the $n$-th homology group of $C_*(X)_{\Wai}$.
For an abelian group $A$, the cochain complex $C^*({X};{A})_{\Wai}:=(C^n({X};{A})_{\Wai},\delta_n)_{n\in\Z}$ is defined in the ordinary way,
where $C^n({X};A)_{\Wai}=\operatorname{Hom}_{\Z}(C_n({X})_{\Wai};A)$, and where the coboundary homomorphism $\delta_n$ sends a cochain $h\in{C}^n({X};{A})_{\Wai}$ into $h\circ\partial_{n+1}\in{C}^{n+1}({X};{A})_{\Wai}$.
An $n$-cocycle of $C^*(X;{A})_{\Wai}$ is called a \textit{shadow \mgr{} $n$-cocycle} of $(X,\Wai)$.
If $Y$ is trivial, we often abbreviate $\Wai$ and $\upsilon\in\Wai$:
we write $C_*(X)_{\Wai}$ by $C_*(X)$, $C^*(X;A)_{\Wai}$ by $C^*(X;A)$ and $\la\upsilon\ra\la\bm{x}_1\ra\cdots\la\bm{x}_k\ra\in C_n(X)_Y$ by $\la\bm{x}_1\ra\cdots\la\bm{x}_k\ra\in C_n(X)$.
We often call an element of $C^n(X;A)$ an \textit{\mgr{} $n$-cocycle} of $X$ simply.
\par
For an $X_\Wai$-coloring $\si$ of a \diag{} $D$, we define the \textit{local weight} $w(\chi;\si)\in{C}_2(X)_{\Wai}$ at any $\chi\in U(D)\sqcup V(D)$ as depicted in Fig.~\ref{fig:local_chain},
where $U(D)$ (resp.~$V(D)$) is the set of crossings (resp.~vertices) of $D$.

	\begin{figure}[htpb]\centerline{
	\includegraphics{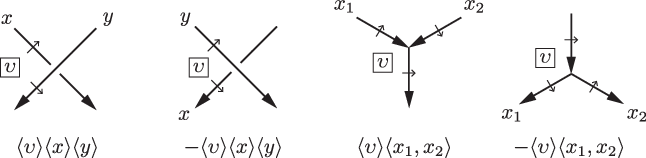}}
	\caption{Local wights for a crossing or a vertex $\chi$.}
	\label{fig:local_chain}
	\end{figure}

We define a 2-chain $W(D;\si)\in{C}_2(X)_{\Wai}$ by
\[W(D;\si)=\sum_{\chi\in U(D)\sqcup V(D)}w(\chi;\si).\]

\begin{lemma}\label{LEM_main_mgr}
Let $X$ be an \mgr{} and $\Wai$ be an $X$-set.
For any $X_{\Wai}$-coloring $\si$ of a \diag{} $D$, the 2-chain $W(D;\si)$ is a 2-cycle.
\end{lemma}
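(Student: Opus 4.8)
The plan is to check directly that $\partial_2 W(D;\si)=0$ in $C_1(X)_\Wai=\Z[\Wai\times X]$. Since $\partial_2$ is additive, $\partial_2 W(D;\si)=\sum_{\chi\in U(D)\sqcup V(D)}\partial_2 w(\chi;\si)$, so it suffices to expand each summand by means of Example~\ref{example_partial_3} and to exhibit a cancelling pairing of the resulting generators. First I would record the local contributions. Reading $w(\chi;\si)$ off Fig.~\ref{fig:local_chain}: a crossing carries a weight $\pm\la\upsilon\ra\la x\ra\la y\ra$ with $y$ the colour of the over-arc, $x$ an adjacent under-arc colour and $\upsilon$ an adjacent region colour, the sign and the precise colours being those prescribed by the sign of the crossing; for instance, at a positive crossing Example~\ref{example_partial_3} gives
\[
\partial_2 w(\chi;\si)=\la\upsilon\star x\ra\la y\ra+\la\upsilon\ra\la x\ra-\la\upsilon\ra\la y\ra-\la\upsilon\star y\ra\la x*y\ra,
\]
and the negative case is the analogous signed sum of four generators. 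A trivalent vertex carries a weight $\pm\la\upsilon\ra\la x_1,x_2\ra$, where $x_1,x_2\in G_\lambda$ are the colours of the two edges meeting $\chi$ coherently and $x_1x_2$ is the colour of the third edge by rule~(iii) (all well defined because a Y-orientation has no source or sink), and Example~\ref{example_partial_3} gives
\[
\partial_2 w(\chi;\si)=\pm\bigl(\la\upsilon\star x_1\ra\la x_2\ra+\la\upsilon\ra\la x_1\ra-\la\upsilon\ra\la x_1x_2\ra\bigr).
\]
In every case each generator occurring in $\partial_2 w(\chi;\si)$ has the form $\pm\la\si(R)\ra\la\si(\beta)\ra$ for a region $R$ and an arc $\beta$ incident to a common corner of $\chi$.

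Next I would match these generators along the $1$-cells of the CW decomposition of $S^2$ induced by $D$, that is, along the pieces $e$ of an arc $\beta$ lying between two consecutive elements of $U(D)\sqcup V(D)$ as one traverses $D$. Such a piece $e$ has two endpoints $\chi_-,\chi_+\in U(D)\sqcup V(D)$, borders a definite region on each of its two sides, and carries $\si$ constantly equal to $\si(\beta)$. I claim that for each region $R$ bordering $e$, the endpoints $\chi_-$ and $\chi_+$ contribute to the coefficient of $\la\si(R)\ra\la\si(\beta)\ra$ with total weight zero. Since every incidence $(\chi,R,\beta)$ arising in the first step is attached to a unique such $1$-cell $e$ and to one of its endpoints, and since circle components of $D$ are built from $1$-cells closing up into a loop, the claim yields $\partial_2 W(D;\si)=0$, so $W(D;\si)$ is a $2$-cycle. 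Proving the claim is a finite case check over the type of each endpoint of $e$ --- an undercrossing at which $\beta$ is the over-arc, the under-in arc or the under-out arc, or a trivalent vertex at which $\beta$ is one of the two coherent edges or the third edge --- together with the sign of that crossing or vertex: in each case one identifies the region colours appearing in the formulas of Example~\ref{example_partial_3} from rule~(iv), uses the axioms of an MGR and of its $X$-sets to see that the corner colours around the crossing or vertex are mutually consistent (around a vertex this is exactly the identity $\upsilon\star(x_1x_2)=(\upsilon\star x_1)\star x_2$ of the first $X$-set axiom), and reads off the two endpoint contributions to $\la\si(R)\ra\la\si(\beta)\ra$. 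The generators $\la\upsilon\ra\la x_1x_2\ra$ produced at vertices fit this scheme because $x_1x_2$ is the colour of an honest arc of $D$ issuing from that vertex.

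I expect the case check in the second step to be the only real work. It requires keeping the signs from Example~\ref{example_partial_3} correlated with the over/under information at crossings, the normal-orientation convention for arcs, and the Y-orientation at trivalent vertices, and verifying that the region colours transported around a crossing or a vertex are consistent. Conceptually nothing is subtle: the crossing cases reproduce the boundary cancellation already underlying the rack (co)homology of~\cite{FennRourkeSanderson95,CJKLS2003}, and the vertex cases are precisely where Definition~\ref{def:multiplegrouprack}~(3) and the $X$-set axioms genuinely enter; Fig.~\ref{fig:local_chain} together with Fig.~\ref{FIG:X_Y_coloring} is what organizes the bookkeeping.
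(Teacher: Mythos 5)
Your proposal is correct and follows essentially the same route as the paper: decompose $\partial_2 W(D;\si)$ into the local contributions of Example~\ref{example_partial_3}, observe that each resulting generator is $\pm\la\rho\ra\la s\ra$ for a semiarc $s$ (your ``$1$-cells'') and a distinguished adjacent region, and cancel the two contributions at the endpoints of each semiarc. The case check you defer is exactly the paper's list of relations $(*)$ together with the sign bookkeeping via $\epsilon(s;\chi)$, so nothing is missing in substance.
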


\begin{proof}
A connected component of $D\setminus U(D)\setminus V(D)$ is called a \textit{semiarc}.
Let $\chi^+$ (resp.~$\chi^-$) be a positive (resp.~negative) crossing of $D$,
and let $\chi^+_i$ (resp.~$\chi^-_i$) be the semiarc incident with $\chi^+$ (resp.~$\chi^-$) as illustrated in Fig.~\ref{fig:prove_2cycle} ($i=1,2,3,4$). 
Let $\omega^+$ (resp.~$\omega^-$) be a vertex which is the initial (resp.~terminal) point of exactly one edge, and let $\omega^+_i$ (resp.~$\omega^-_i$) be the semiarc incident with $\omega^+$ (resp.~$\omega^-_i$) as illustrated in Fig.~\ref{fig:prove_2cycle} ($i=1,2,3$).
There is the unique region $\rho_s$ facing a semiarc $s$ so that the normal orientation of $s$ points from the region $\rho_s$ to the opposite region with respect to $s$.

	\begin{figure}[htpb]\centerline{
	\includegraphics{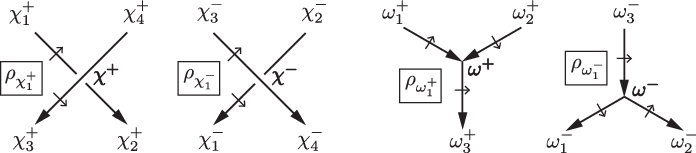}}
	\caption{Crossings $\chi^+$, $\chi^-$ and vertices $\omega^+$, $\omega^-$.}
	\label{fig:prove_2cycle}
	\end{figure}

For an $X_\Wai$-coloring $c$ of $D$, we define $\si(s)=\si(a)$ if an arc $a$ of $D$ contains a semiarc $s$.
Moreover, we write $s=\si(s)$ for simplicity notation.
Then, we immediately have the following equations $(*)$:
\begin{align*}
&\rho_{\chi^\pm_1}=\rho_{\chi^\pm_3},\quad
\chi^\pm_3=\chi^\pm_4,\quad
\rho_{\omega^\pm_1}=\rho_{\omega^\pm_3},\quad
\rho_{\chi^\pm_1}\star \chi^\pm_1=\rho_{\chi^\pm_4},\quad
\rho_{\chi^\pm_1}\star \chi^\pm_3=\rho_{\chi^\pm_2},\quad\\
&\chi^\pm_1\ast\chi^\pm_3=\chi^\pm_2,\quad
\rho_{\omega^\pm_1}\star\omega^\pm_1=\rho_{\omega^\pm_2},\quad
\omega^\pm_1\omega^\pm_2=\omega^\pm_3.
\end{align*}
For a crossing or a vertex $\chi$ of $D$ and a semiarc $s$ incident to $\chi$, we denote by $\mathcal S(D;\chi)$ the set of semiarcs incident to $\chi$, and define $\epsilon(s;\chi)$ by
\[
\epsilon(s;\chi)=
\begin{cases}
 1&\text{if the orientation of $s$ points to $\chi$},\\
-1&\text{otherwise}
\end{cases}.
\]
Then, by the equations $(*)$ and an equation of Example~\ref{example_partial_3},
\begin{align*}
\partial_2(w(\chi^\pm;{\si}))&=\partial_2\bigl(\pm\la\rho_{\chi_1^\pm}\ra\la\chi_1^\pm\ra\la\chi_3^\pm\ra\bigr)\\
&=\pm\bigl(\la\rho_{\chi_1^\pm}\star\chi_1^\pm\ra\la\chi_3^\pm\ra+\la\rho_{\chi_1^\pm}\ra\la\chi_1^\pm\ra-\la\rho_{\chi_1^\pm}\ra\la\chi_3^\pm\ra-\la\rho_{\chi_1^\pm}\star\chi_3^\pm\ra\la\chi_1^\pm*\chi_3^\pm\ra\bigr)\\
&=\pm\la \rho_{\chi_4^\pm}\ra\la \chi_4^\pm\ra\pm\la \rho_{\chi_1^\pm}\ra\la \chi_1^\pm\ra\mp\la \rho_{\chi_3^\pm}\ra\la \chi_3^\pm\ra\mp\la \rho_{\chi_2^\pm}\ra\la \chi_2^\pm\ra\\
&=\sum_{s\in\mathcal S(D;\chi^\pm)}\epsilon(s;\chi^\pm)\la \rho_s\ra\la s\ra,\text{ and}
\end{align*}
\begin{align*}
\partial_2(w(\omega^\pm;\si))
&=\partial_2\bigl(\pm\la \rho_{\omega_1^\pm}\ra\la \omega_1^\pm,\,\omega_2^\pm\ra\bigr)\\
&=\pm\bigl(\la \rho_{\omega_1^\pm}\star\omega_1^\pm\ra\la\omega_2^\pm\ra
+\la\rho_{\omega_1^\pm}\ra\la\omega_1^\pm\ra
-\la\rho_{\omega_1^\pm}\ra\la\omega_1^\pm\omega_2^\pm\ra\bigr)\\
&=\pm
\la\rho_{\omega_2^\pm}\ra\la\omega_2^\pm\ra
\pm\la\rho_{\omega_1^\pm}\ra\la\omega_1^\pm\ra
\mp\la\rho_{\omega_3^\pm}\ra\la\omega_3^\pm\ra\\
&=
\sum_{s\in\mathcal S(D;\omega^\pm)}\epsilon(s;\omega^\pm)\la\rho_s\ra\la s\ra.
\end{align*}
Therefore, we have $\partial_2(W({D};{\si}))=0$ since
\begin{align*}
\partial_2(W({D};{\si}))
&=\sum_{\chi\in{{U}({D})\sqcup{V}(D)}}\partial_2\left({w}(\chi;{\si})\right)\\
&=\sum_{\chi\in{{U}({D})\sqcup{V}(D)}}\Bigl(\sum_{s\in\mathcal S({D};\chi)}\epsilon(s;\chi)\la{\rho}_{s}\ra\la s\ra\Bigr)\\
&=\sum_{s\in\mathcal S({D})}\left(\epsilon(s;s^+)\la{\rho}_s\ra\la s\ra+\epsilon(s;s^-)\la{\rho}_{s}\ra\la s\ra\right),
\end{align*}
where $\mathcal S(D)$ is the set of semiarcs of $D$, and $s^\pm$ are the end points of $s\in \mathcal S(D)$.
\end{proof}

By the lemma above, it holds that
$\theta(W(D;\si))=\theta'(W(D;\si))$ for any cohomologous 2-cocycles $\theta$ and $\theta'$ of $C^2({X};{A})_{\Wai}$.
\par
We denote by $[W]$ the homology class that contains $W$ for a 2-cycle $W\in{C}_2(X)_\Wai$.

\begin{proposition}\label{prop:Y-orientation}
Let $X$ be an \mgr{} and $\Wai$ be an $X$-set.
Suppose that $D$ is a \diag, and that $D'$ is a \diag{} obtained by applying one of \Reidemeister{} to $D$ once.
For any $X_{\Wai}$-coloring $\si$ of $D$, it follows that $\left[W(D;\si)\right]=\left[W(D';\si')\right]$ in $H_2(X)_{\Wai}$, where $\si'$ is the unique $X_{\Wai}$-coloring of $D'$ described in Proposition~\ref{prop:coloring_invariant}
\end{proposition}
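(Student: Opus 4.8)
The plan is to show, for each of the \Reidemeister{}, that the $2$-chain $W(D;\si)-W(D';\si')$ is a boundary in $C_*(X)_{\Wai}$; since $W(D;\si)$ and $W(D';\si')$ are both $2$-cycles by Lemma~\ref{LEM_main_mgr}, this is exactly the assertion $[W(D;\si)]=[W(D';\si')]$ in $H_2(X)_{\Wai}$. The first step is to localize the difference. By Proposition~\ref{prop:coloring_invariant} the colorings $\si$ and $\si'$ coincide on every arc and every region lying outside the disk $B\subset S^2$ in which the move is performed, so $w(\chi;\si)=w(\chi;\si')$ for every crossing or vertex $\chi$ outside $B$, and all such contributions cancel. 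We are thus reduced to
\[
W(D;\si)-W(D';\si')=\sum_{\chi\in(U(D)\sqcup V(D))\cap B}w(\chi;\si)\ -\sum_{\chi'\in(U(D')\sqcup V(D'))\cap B}w(\chi';\si'),
\]
a $2$-chain involving only the finitely many crossings and vertices inside $B$, and it suffices to exhibit, in each case, a $3$-chain $\xi\in C_3(X)_{\Wai}$ (possibly $\xi=0$) with $\partial_3(\xi)$ equal to this localized difference.

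Next I would run through the moves of Fig.~\ref{FIG:spatial_surface_Reidemeister_move}, reading off the relevant $\xi$ from the computations in Example~\ref{example_partial_3}. For a second-type move the two crossings created or destroyed have opposite signs; recalling from the proof of Lemma~\ref{LEM_main_mgr} that the weight of a negative crossing is read off from its \emph{outgoing} under-semiarc, whose colour and adjacent region coincide with those of the incoming under-semiarc of the partner positive crossing, one checks the two local weights cancel, so $\xi=0$. For a third-type move, with strands coloured $a,b,c\in X$ and central region coloured $\rho$, the alternating sum of the six crossing weights is $\pm\partial_3(\la\rho\ra\la a\ra\la b\ra\la c\ra)$, whose six terms are exactly those listed in Example~\ref{example_partial_3}. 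For a move pushing an edge across a trivalent vertex, $\xi$ is one of the mixed generators $\la\rho\ra\la x\ra\la y_1,y_2\ra$ or $\la\rho\ra\la x_1,x_2\ra\la y\ra$ (according to whether the moving edge runs along the one-leg side or the two-leg side, and over or under): the five terms of the corresponding $\partial_3$ in Example~\ref{example_partial_3} split as one vertex weight plus one crossing weight on one side of the move and one vertex weight plus two crossing weights on the other, and the matching uses the \mgr{} axiom $y*(y_1y_2)=(y*y_1)*y_2$ together with the corresponding $X$-set axiom. For the move reconnecting two trivalent vertices (the IH-type move), $\xi=\pm\la\rho\ra\la x_1,x_2,x_3\ra$, and the identity to be checked is precisely the formula for $\partial_3(\la\upsilon\ra\la x_1,x_2,x_3\ra)$ in Example~\ref{example_partial_3}, which encodes associativity in the groups $G_\lambda$. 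Any remaining move that rearranges strands without altering the set of crossings and vertices of the diagram gives $\xi=0$ trivially.

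The step I expect to be the main obstacle is the bookkeeping in the vertex cases, rather than any conceptual point: one must (i) select, for each local weight, the correct region colour using the normal-orientation convention recalled in the proof of Lemma~\ref{LEM_main_mgr} (the identities $(*)$ there); (ii) keep track of the Y-orientation so that the two incoming legs and the outgoing leg at each vertex are taken in the correct order, which is what makes a product $y_1y_2$ inside a bracket well defined; and (iii) invoke the \mgr{} and $X$-set axioms precisely where they were used to establish $\partial\circ\partial=0$ (Lemma~\ref{LEM_zeromap} and Proposition~\ref{prop_partialpartialzero}). Once $\xi$ is written down correctly in each case, verifying $\partial_3(\xi)=W(D;\si)-W(D';\si')$ is a term-by-term comparison with Example~\ref{example_partial_3}, requiring no identities beyond those already used to show that $C_*(X)_{\Wai}$ is a chain complex.
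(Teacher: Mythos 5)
Your proposal is correct and follows essentially the same route as the paper: localize the difference $W(D;\si)-W(D';\si')$ to the disk of the move, then for each Y-oriented Reidemeister move exhibit the bounding $3$-chain ($0$ for the R2-type move, $\pm\la\upsilon\ra\la x\ra\la y\ra\la z\ra$ for R3, $\pm\la\upsilon\ra\la x\ra\la y_1,y_2\ra$ or $\pm\la\upsilon\ra\la x_1,x_2\ra\la y\ra$ for the edge-past-vertex move, and $\pm\la\upsilon\ra\la x_1,x_2,x_3\ra$ for the IH-type move), matching the boundaries computed in Example~\ref{example_partial_3}. The paper's proof is exactly this case list (with the verifications deferred to a figure), so no further comparison is needed.
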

\begin{proof}
It suffices to show $W(D;\si)-W(D';\si')=\partial_3(w)$ for some element $w\in{C}_3(X)_\Wai$.
When $D'$ is obtained from $D$ by applying an R2 move, $W(D;\si)-W(D';\si')=0=\partial_3(0)$.
When $D'$ is obtained from $D$ by applying an R3 move, $W(D;\si)-W(D';\si')=\partial_3(\pm\la\upsilon\ra\la{x}\ra\la{y}\ra\la{z}\ra)$ (see Example~\ref{example_partial_3}).
When $D'$ is obtained from $D$ by applying an R5 move, $W(D;\si)-W(D';\si')=\partial_3(\pm\la\upsilon\ra\la{x}\ra\la{y}_1,{y}_2\ra)$ or $\partial_3(\pm\la\upsilon\ra\la{x}_1,{x}_2\ra\la{y}\ra)$.
When $D'$ is obtained from $D$ by applying an R6 move, $W(D;\si)-W(D';\si')=\partial_3(\pm\la\upsilon\ra\la{x}_1,{x}_2,{x}_3\ra)$.
In Fig.~\ref{FIG:spatial_surface_homology_reidemeister}, some examples are written.
\end{proof}

		\begin{figure}[htbp]\centerline{
		\includegraphics{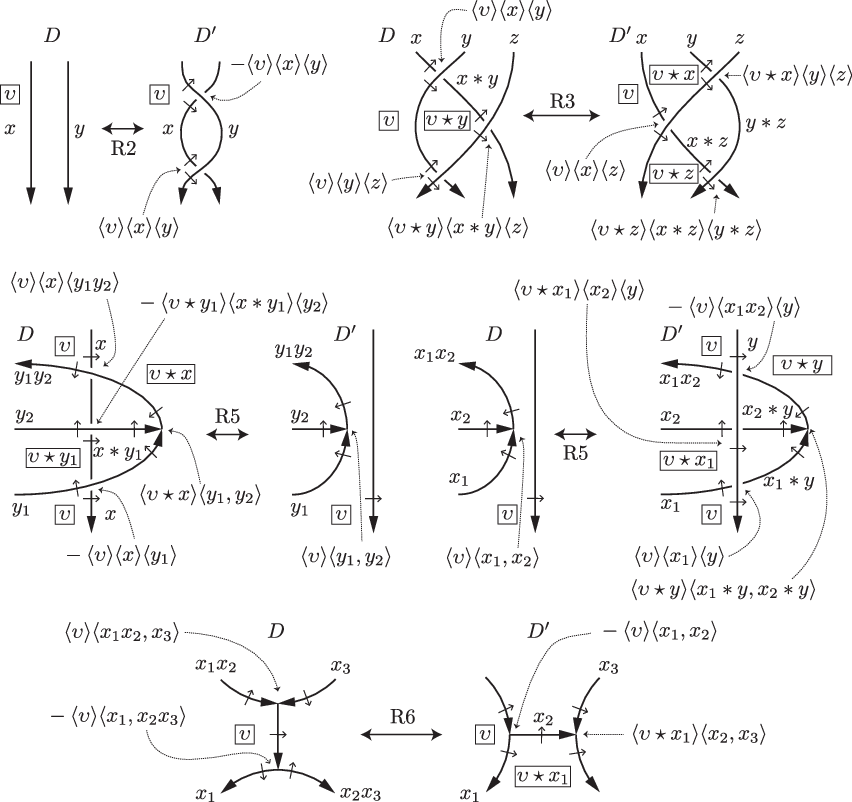}}
		\caption{Local 2-chains of $W(D;\si)$ and $W(D';\si')$.}
		\label{FIG:spatial_surface_homology_reidemeister}
		\end{figure}

For a \diag{} $D$ and a shadow \mgr{} 2-cocycle $\theta:C_2(X)_\Wai\to A$, we define the following multisets:
\begin{align*}
\mathcal{H}({D})&=\Set{[W(D;\si)]\in{H}_2(X)_{\Wai}|\si\in\col_X(D)_{\Wai}},\\
\Phi_\theta({D})&=\Set{\theta(W(D;\si))|\si\in\col_X(D)_{\Wai}},
\end{align*}
where $X$ is an \mgr, $\Wai$ is an $X$-set, and $A$ is an abelian group.
\par
The following theorem follows from Theorem~\ref{theorem_oriented_surface_with_S1} and Proposition~\ref{prop:Y-orientation}.
\begin{theorem}\label{thm:main1}
Let $D$ be a \diag{} of an \surf{} $(F,o)$.
Then, the multisets $\mathcal H(D)$ and $\Phi_\theta(D)$ are invariants of $(F,o)$, respectively.
\end{theorem}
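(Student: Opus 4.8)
The plan is to reduce the claim to two facts already available: (i) by Proposition~\ref{prop:coloring_invariant}, applying a single Y-oriented Reidemeister move sets up a bijection $\col_X(D)_{\Wai}\to\col_X(D')_{\Wai}$, $\si\mapsto\si'$, and (ii) by Proposition~\ref{prop:Y-orientation}, this bijection satisfies $[W(D;\si)]=[W(D';\si')]$ in $H_2(X)_{\Wai}$. Granting these, I would argue as follows. First, observe that for a single move the multiset $\mathcal H(D)=\{[W(D;\si)]\mid\si\in\col_X(D)_{\Wai}\}$ equals $\mathcal H(D')$: the bijection $\si\mapsto\si'$ matches up the colorings, and by (ii) the homology classes they contribute coincide, so the two multisets have the same underlying set with the same multiplicities. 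Hence $\mathcal H$ is invariant under a single Y-oriented Reidemeister move.

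Next I would upgrade "invariant under a single move" to "invariant of $(F,o)$" by invoking Theorem~\ref{theorem_oriented_surface_with_S1}: any two Y-oriented diagrams of equivalent \surfs{} are connected by a finite sequence of Y-oriented Reidemeister moves, and a multiset-valued quantity that is preserved by each single move in the sequence is preserved by the whole sequence (this is just transitivity of equality of multisets). Therefore $\mathcal H(D)$ depends only on the equivalence class of $(F,o)$, which is exactly the first assertion of the theorem.

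For $\Phi_\theta(D)=\{\theta(W(D;\si))\mid\si\in\col_X(D)_{\Wai}\}$ I would run the same argument, but here the key point is that $\theta$ is a \emph{cocycle}, so $\theta$ descends to a function on homology classes: if $[W]=[W']$ then $W-W'=\partial_3(w)$ for some $w\in C_3(X)_{\Wai}$, and $\theta(W)-\theta(W')=\theta(\partial_3 w)=(\delta_2\theta)(w)=0$. Combining this with (ii) gives $\theta(W(D;\si))=\theta(W(D';\si'))$ for corresponding colorings under a single move, hence $\Phi_\theta(D)=\Phi_\theta(D')$ as multisets; then Theorem~\ref{theorem_oriented_surface_with_S1} again propagates this along an arbitrary sequence of moves, giving invariance of $\Phi_\theta$.

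I do not expect a genuine obstacle here: the substance of the argument has already been discharged in Lemma~\ref{LEM_main_mgr} (that $W(D;\si)$ is a cycle, so $[W(D;\si)]$ and $\theta(W(D;\si))$ make sense) and in Proposition~\ref{prop:Y-orientation} (move-invariance of the homology class). The only mildly delicate point worth spelling out is the bookkeeping at the level of \emph{multisets} rather than sets: one must note that the map $\si\mapsto\si'$ is a bijection (not merely a map) between colorings of $D$ and of $D'$, so that multiplicities are preserved; this is precisely the uniqueness clause of Proposition~\ref{prop:coloring_invariant}, applied in both directions. Beyond that the proof is a short formal assembly of the cited results.
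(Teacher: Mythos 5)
Your proposal is correct and follows essentially the same route as the paper, which simply derives the theorem from Theorem~\ref{theorem_oriented_surface_with_S1} and Proposition~\ref{prop:Y-orientation} (with Proposition~\ref{prop:coloring_invariant} supplying the bijection of colorings). Your extra remarks --- that $\theta$ descends to homology because $W(D;\si)$ is a cycle and $\theta$ is a cocycle, and that the uniqueness clause makes $\si\mapsto\si'$ a bijection so multiplicities match --- are exactly the (unstated) bookkeeping the paper relies on.
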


Then we put $\mathcal{H}(F,o)=\mathcal{H}({D})$.
We also put $\Phi_\theta(F,o)=\Phi_\theta({D})$, which is called a \textit{(shadow) \mgr{} cocycle invariant}.
\par
Let $Q\times\Z_{k\type Q}$ be the associated \mgr{} of a rack $Q$ and $Y$ be a $(Q\times\Z_{k\type Q})$-set.
For a $(Q\times\Z_{k\type Q})_Y$-coloring $\si:\mathcal A(D)\cup\mathcal R(D)\to (Q\times\Z_{k\type Q})\cup\Wai$ of a \diag{} $D$, we set $\gcd \si=\gcd\left(\Set{\operatorname{pr}_2\circ\,c(a)|a\in \mathcal A(D)}\cup\{k\type Q\}\right)$, where $\operatorname{pr}_2:Q\times \Z_{k\type Q}\to\Z_{k\type Q}$ is the canonical projection, and where we regard $\operatorname{pr}_2\circ\,c(a)$ as an element of $\Z$ which represents $\operatorname{pr}_2\circ\,c(a)\in\Z_{k\type Q}$.
Here, we set $\gcd (S\cup\{\infty\})=\gcd S$ for a set $S$ of integers.
Then, we have the following lemma immediately.

\begin{lemma}\label{lemma:gcd_new}
Suppose that $D$ is a \diag, and that $D'$ is a \diag{} obtained by applying one of \Reidemeister{} to $D$ once.
For any $(Q\times\Z_{k\type Q})_{\Wai}$-coloring $\si$ of $D$, it holds that $\operatorname{gcd}\si=\operatorname{gcd}\si'$,
where $\si'$ is the unique $(Q\times\Z_{k\type Q})_{\Wai}$-coloring of $D'$ described in Proposition~\ref{prop:coloring_invariant}.
\end{lemma}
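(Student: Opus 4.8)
The plan is to trace the second coordinate $\operatorname{pr}_2$ of an $X_{\Wai}$-colouring through the two local operations that govern the colouring rules. In the associated \mgr{} $X=Q\times\Z_{k\type Q}$ each group $G_a=\{a\}\times\Z_{k\type Q}$ is a copy of $\Z_{k\type Q}$ with $(a,g)(a,h)=(a,g+h)$, and, since $\Z_{k\type Q}$ is abelian, the rack operation is $(a,g)*(b,h)=(a*^h b,\,g)$. Consequently $\operatorname{pr}_2(x*y)=\operatorname{pr}_2(x)$ for all $x,y\in X$, while $\operatorname{pr}_2(x_1x_2)\equiv\operatorname{pr}_2(x_1)+\operatorname{pr}_2(x_2)$ and $\operatorname{pr}_2(x^{-1})\equiv-\operatorname{pr}_2(x)\pmod{k\type Q}$ whenever $x_1,x_2,x$ lie in a common $G_\lambda$. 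Writing $\Gamma(\si)$ for the subgroup of $\Z$ generated by $\{k\type Q\}\cup\{\operatorname{pr}_2\circ\si(a)\mid a\in\mathcal A(D)\}$, the integer $\gcd\si$ is the non-negative generator of $\Gamma(\si)$, so it suffices to prove $\Gamma(\si)=\Gamma(\si')$.

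By the definition of the \Reidemeister{}, the move from $D$ to $D'$ is performed inside a disk $E\subset S^2$; outside $E$ the diagrams $D$, $D'$ and the colourings $\si$, $\si'$ agree, so the arcs meeting the exterior of $E$ carry the same colours in both diagrams, and the colours $b_1,\ldots,b_m$ on the strands where the diagram crosses $\partial E$ are common to $\si$ and $\si'$. Let $\Gamma_0\subseteq\Z$ be the subgroup generated by $\{k\type Q\}$ together with the values $\operatorname{pr}_2\circ\si(a)$ over all arcs $a$ meeting the exterior of $E$; then $\Gamma_0\subseteq\Gamma(\si)\cap\Gamma(\si')$, and $\operatorname{pr}_2(b_i)\in\Gamma_0$ for every $i$. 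I would then show $\Gamma(\si)\subseteq\Gamma_0$: an arc of $D$ not meeting the exterior of $E$ lies inside $E$, so by Proposition~\ref{prop:coloring_invariant} its colour is determined by $b_1,\ldots,b_m$, and inspection of the moves in Fig.~\ref{FIG:spatial_surface_Reidemeister_move} shows that this colour is obtained from the $b_i$ by finitely many applications of $*$, of multiplication inside a single $G_\lambda$, and of inversion --- at a crossing the colour carried along the under-strand is modified by $*$, and at a vertex two colours of a common $G_\lambda$ are multiplied. By the displayed relations its second coordinate lies in $\Gamma_0$, whence $\Gamma(\si)=\Gamma_0$. Exchanging the roles of $D$ and $D'$ yields $\Gamma(\si')=\Gamma_0$ as well, and therefore $\gcd\si=\gcd\si'$.

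The only real work is the move-by-move inspection in the last step; it is routine once the two relations on $\operatorname{pr}_2$ are available, and the one subtlety is that $\operatorname{pr}_2$ is valued in $\Z_{k\type Q}$ rather than $\Z$, so the additive relation at a vertex only holds modulo $k\type Q$. Including $k\type Q$ among the generators of $\Gamma(\si)$ --- equivalently, in the definition of $\gcd\si$ --- absorbs this ambiguity and makes every inclusion above hold over $\Z$.
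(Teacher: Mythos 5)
Your argument is correct and is precisely the verification that the paper leaves implicit (the paper states this lemma "immediately," with no proof): the key facts are that $\operatorname{pr}_2$ is unchanged by the rack operation of $Q\times\Z_{k\operatorname{type}Q}$ (since $\Z_{k\operatorname{type}Q}$ is abelian, $h^{-1}gh=g$) and is additive under the group multiplication at vertices, so the subgroup of $\Z$ generated by $k\operatorname{type}Q$ and the second coordinates is already generated by the colors of the arcs meeting the exterior of the disk where the move is applied, which are common to $\si$ and $\si'$. Your remark that adjoining $k\operatorname{type}Q$ to the generating set absorbs the ambiguity of lifting from $\Z_{k\operatorname{type}Q}$ to $\Z$ is exactly the point that makes the argument clean.
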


For a \diag{} $D$, a shadow \mgr{} 2-cocycle $\theta:C_2(Q\times\Z_{k\type Q})_\Wai\to A$ and an integer $m$ with $1\le m \le {k\type Q}$,
we define the following multisets:
\begin{align*}
\mathcal{H}^m({D})&=\Set{[W(D;\si)]\in{H}_2(Q\times\Z_{k\type Q})_{\Wai}|\si\in\col_{Q\times\Z_{k\type Q}}(D)_{\Wai},\,\gcd c=m},\\
\Phi_\theta^m({D})&=\Set{\theta(W(D;\si))|\si\in\col_{Q\times\Z_{k\type Q}}(D)_{\Wai},\,\gcd c=m}.
\end{align*}
The following theorem follows from Theorem~\ref{theorem_oriented_surface_with_S1}, Proposition~\ref{prop:Y-orientation} and Lemma~\ref{lemma:gcd_new}.

\begin{theorem}\label{thm:main2}
Let $D$ be a \diag{} of an \surf{} $(F,o)$.
The multisets $\mathcal H^m(D)$ and $\Phi^m_\theta(D)$ are invariants of $(F,o)$, respectively.
\end{theorem}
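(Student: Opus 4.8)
The plan is to repeat the argument proving Theorem~\ref{thm:main1}, inserting Lemma~\ref{lemma:gcd_new} to keep track of the extra condition $\gcd\si=m$. Write $X=Q\times\Z_{k\type Q}$ for short. By Theorem~\ref{theorem_oriented_surface_with_S1}, any \diag{} of $(F,o)$ is related to $D$ by a finite sequence of \Reidemeister, so by induction on the length of such a sequence it is enough to prove: if $D'$ is a \diag{} obtained by applying one of \Reidemeister{} to $D$ once, then $\mathcal H^m(D)=\mathcal H^m(D')$ and $\Phi^m_\theta(D)=\Phi^m_\theta(D')$ as multisets. Fix such a $D'$, and let $\si\mapsto\si'$ be the bijection $\col_X(D)_{\Wai}\to\col_X(D')_{\Wai}$ provided by Proposition~\ref{prop:coloring_invariant}, where $\si'$ is the unique $X_{\Wai}$-coloring of $D'$ that agrees with $\si$ outside the disk of the move.

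First I would record the properties of this bijection that the argument needs. By Proposition~\ref{prop:Y-orientation}, $[W(D;\si)]=[W(D';\si')]$ in $H_2(X)_{\Wai}$; since the fixed shadow \mgr{} $2$-cocycle $\theta$ vanishes on boundaries, this equality of homology classes also yields $\theta(W(D;\si))=\theta(W(D';\si'))$. By Lemma~\ref{lemma:gcd_new}, $\gcd\si=\gcd\si'$. Consequently, for every $m$ with $1\le m\le k\type Q$, the bijection $\si\mapsto\si'$ restricts to a bijection between $\Set{\si\in\col_X(D)_{\Wai}|\gcd\si=m}$ and $\Set{\si'\in\col_X(D')_{\Wai}|\gcd\si'=m}$, and under this restricted bijection the attached labels $[W(D;\si)]$ and $\theta(W(D;\si))$ coincide entry by entry with the corresponding labels for $D'$. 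Hence the multisets of labels are equal, namely $\mathcal H^m(D)=\mathcal H^m(D')$ and $\Phi^m_\theta(D)=\Phi^m_\theta(D')$, which closes the induction.

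I do not expect any genuine obstacle: all of the topological and homological content is already carried by Proposition~\ref{prop:coloring_invariant}, Proposition~\ref{prop:Y-orientation} and Lemma~\ref{lemma:gcd_new}, and the present statement is essentially their conjunction refined by the value of $\gcd$. The one point that warrants a little care is the multiset bookkeeping: one must check that a bijection of index sets which matches the attached labels one-to-one induces an equality of the corresponding multisets of labels, and that cutting down to the submultiset defined by the constraint $\gcd\si=m$ is compatible with this identification (this compatibility is precisely what Lemma~\ref{lemma:gcd_new} supplies).
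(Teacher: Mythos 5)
Your proof is correct and follows exactly the route the paper intends: the paper gives no written argument beyond stating that the theorem follows from Theorem~\ref{theorem_oriented_surface_with_S1}, Proposition~\ref{prop:Y-orientation} and Lemma~\ref{lemma:gcd_new}, and your write-up simply makes that deduction explicit (including the correct observation that a cocycle takes equal values on homologous cycles, and that the coloring bijection restricts to each $\gcd=m$ stratum).
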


Then we put $\mathcal{H}^m(F,o)=\mathcal{H}^m({D})$ and $\Phi^m_\theta(F,o)=\Phi^m_\theta({D})$, respectively.

\begin{proposition}\label{prop:minusinvers_localchainminus}
For an \surf{} $(F,o)$, it follows that
$\mathcal H(-F^*,-o^*)=-\mathcal H(F,o)$, $\Phi_\theta(-F^*,-o^*)=-\Phi_\theta(F,o)$, $\mathcal H^m(-F^*,-o^*)=-\mathcal H^m(-F^*,-o^*)$ and $\Phi^m_\theta(-F^*,-o^*)=-\Phi^m_\theta(F,o)$,
where we write $-S=\Set{-a|a\in S}$ for a multiset $S$.
\end{proposition}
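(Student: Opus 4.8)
The plan is to trace through the explicit mirror-image construction already used in the proof of Proposition~\ref{prop:coloring_not_distinguish} and show that it negates every local weight. Recall that if $D$ is a \diag{} of $(F,o)$ lying in $\R^2\subset S^2$, then $-D^*:=-r(D)$ is a \diag{} of $(-F^*,-o^*)$, where $r(x,y)=(-x,y)$; and for each $X_\Wai$-coloring $\si$ of $D$ there is the symmetric coloring $\si^*$ of $-D^*$ described in Fig.~\ref{FIG:X_Y_coloring_proof_mirror}, giving a bijection $\col_X(D)_\Wai\to\col_X(-D^*)_\Wai$. First I would set up a crossing/vertex correspondence: reflecting by $r$ reverses the sign of every crossing (a positive crossing of $D$ becomes a negative crossing of $-D^*$ and vice versa), and reversing all arc orientations interchanges, at each crossing, the roles of the ``upper'' and ``lower'' incoming arcs while preserving over/under information; at each vertex it exchanges initial-type vertices $\omega^+$ with terminal-type vertices $\omega^-$. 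So each $\chi\in U(D)\sqcup V(D)$ has a partner $\chi^*\in U(-D^*)\sqcup V(-D^*)$.

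The key step is the local computation $w(\chi^*;\si^*) = -w(\chi;\si)$ in $C_2(X)_\Wai$. For a crossing this should be immediate from the definition in Fig.~\ref{fig:local_chain}: the sign flips because the crossing sign flips, while the triple $\la\rho\ra\la x\ra\la y\ra$ of colors attached to it is unchanged under the symmetric coloring (this is exactly why $\si^*$ is defined the way it is). The same holds at a vertex: an $\omega^+$-type contribution $+\la\rho\ra\la x_1,x_2\ra$ becomes the $\omega^-$-type contribution $-\la\rho\ra\la x_1,x_2\ra$ on the same colors, since under the symmetric coloring the product $x_1x_2$ on the merged edge is preserved. Summing over all crossings and vertices, $W(-D^*;\si^*)=-W(D;\si)$ in $C_2(X)_\Wai$; in particular the bijection $\si\mapsto\si^*$ sends $[W(D;\si)]$ to $-[W(D;\si)]$ in $H_2(X)_\Wai$ and $\theta(W(D;\si))$ to $-\theta(W(D;\si))$. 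This gives $\mathcal H(-F^*,-o^*)=-\mathcal H(F,o)$ and $\Phi_\theta(-F^*,-o^*)=-\Phi_\theta(F,o)$ directly.

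For the graded versions, I would note that $\col$ restricted to an associated \mgr{} $X=Q\times\Z_{k\type Q}$ has $\mathrm{pr}_2$ unaffected by the symmetric-coloring operation (reflecting the diagram and reversing orientations does not change which $\Z_{k\type Q}$-component an arc color lies in, only the $Q$-component permutes and the crossing sign flips), so $\gcd\si^*=\gcd\si$. Hence the bijection $\si\mapsto\si^*$ restricts to a bijection between the sub-multisets with $\gcd=m$, and the same negation argument yields $\mathcal H^m(-F^*,-o^*)=-\mathcal H^m(F,o)$ and $\Phi^m_\theta(-F^*,-o^*)=-\Phi^m_\theta(F,o)$. (The statement as printed contains the typo $\mathcal H^m(-F^*,-o^*)=-\mathcal H^m(-F^*,-o^*)$, which I would silently correct to $=-\mathcal H^m(F,o)$.)

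The main obstacle is purely bookkeeping: verifying $w(\chi^*;\si^*)=-w(\chi;\si)$ for \emph{every} local picture — each crossing sign together with each of the four diagrammatic orientations at a crossing, and both vertex types with the edge orientations — which is best handled by a figure (an augmented version of Fig.~\ref{FIG:X_Y_coloring_proof_mirror} with the 2-chains marked) rather than a case list in prose. Once that figure is in place, the global statement is a one-line consequence of linearity of $\partial_2$ and the fact that $\si\mapsto\si^*$ is a bijection preserving $\gcd$.
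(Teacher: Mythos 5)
Your proposal is correct and follows essentially the same route as the paper: pass to the mirror-reversed diagram $-D^*$ with the symmetric coloring $c^*$ from Proposition~\ref{prop:coloring_not_distinguish}, observe that each local weight changes sign so that $W(-D^*;c^*)=-W(D;c)$, and conclude. Your additional check that $\gcd\si^*=\gcd\si$ (needed for the graded invariants $\mathcal H^m$ and $\Phi^m_\theta$, and left implicit in the paper) and your correction of the typo in the statement are both right.
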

\begin{proof}
Let $D$ (resp.~$-D^*$) be the \diag{} of $(F,o)$ (resp.~$(-F^*,-o^*)$), and let $c$ (resp.~$c^*$) be its  coloring defined in the proof of Proposition~\ref{prop:coloring_not_distinguish}.
It suffices to show that $W(D,c)=-W(-D^*,c^*)$.
We can observe that the local weight at each vertex or crossing in $D$ with $c$ has the opposite sign from the corresponding local weight in $-D^*$ with $c^*$ (see Figure~\ref{FIG:X_Y_coloring_proof_mirror}).
\end{proof}

%%%%%%%%%%%%%
%%%%%%%%%%%%%
\section{Constructions of shadow rack 2-cocycles}\label{SEC:2-COCYCLE_racks}
In this section, we suppose that ${Q}$ is a rack, ${\Wai}$ is a ${Q}$-set and $A$ is an abelian group.
\par
A 2-cochain $\theta:{C}_2^{\mathrm{R}}({Q})_{\Wai}\to{A}$ of ${C}^*_{\mathrm{R}}({Q};A)_{\Wai}$ is a cocycle if and only if the following condition is satisfied:
\begin{align*}
&\hspace{3ex}\theta\left(
	\la\upsilon\star{a}\ra\la{b}\ra\la{c}\ra
  +\la\upsilon\ra\la{a}\ra\la{c}\ra
  +\la\upsilon\star{c}\ra\la{a}*{c}\ra\la{b}*{c}\ra
	\right)\\
&=\theta\left(
   \la\upsilon\ra\la{b}\ra\la{c}\ra
  +\la\upsilon\star{b}\ra\la{a}*{b}\ra\la{c}\ra
  +\la\upsilon\ra\la{a}\ra\la{b}\ra
	\right),
\end{align*}
for any $\upsilon\in\Wai$ and $a,b,c\in{Q}$,
since
\begin{align*}
\partial_3(\la\upsilon\ra\la{a}\ra\la{b}\ra\la{c}\ra)
&=\la\upsilon\star{a}\ra\la{b}\ra\la{c}\ra
  +\la\upsilon\ra\la{a}\ra\la{c}\ra
  +\la\upsilon\star{c}\ra\la{a}*{c}\ra\la{b}*{c}\ra\\
&\phantom{=}
  -\la\upsilon\ra\la{b}\ra\la{c}\ra
  -\la\upsilon\star{b}\ra\la{a}*{b}\ra\la{c}\ra
  -\la\upsilon\ra\la{a}\ra\la{b}\ra.
\end{align*}
We call the above condition the \textit{(shadow) rack 2-cocycle condition}.

\begin{example}\label{ex_COCYCLES}{\rm
Let $f,g,h$ be elements of the polynomial ring $R[t^\pm,s]$ over a ring $R$.
Let $M$ be a left $R'$-module, where 
\[
R'=R[t^\pm,s]/\left(s(t+s-1), f(1-t),fs,(h(1-t)+(f+g)s), (f+g+h)s\right).
\]
For a $(t,s)$-rack $M$, we define a 2-cochain $\theta:{C}_2^{\mathrm{R}}(M)_{M}\to M$ by $\theta(x,y,z)=fx+gy+hz$.
Then $\theta$ is a 2-cocycle of ${C}^*_{\mathrm{R}}(M;M)_{M}$, since the shadow rack 2-cocycle condition is satisfied.
The details are left to the reader.
}\end{example}

Next proposition claims that we can obtain a shadow rack 2-cocycle $\oline{\theta}:{C}_2^{\mathrm{R}}(Q^n)_{\Wai}\to A$ from a given shadow rack 2-cocycle $\theta:{C}_2^{\mathrm{R}}({Q})_{\Wai}\to A$.

\begin{proposition}\label{prop:quandle rack}
Let $\theta:{C}_2^{\mathrm{R}}(Q)_{\Wai}\to A$ be a shadow rack 2-cocycle.
Fix ${e}_1,\ldots,{e}_m\in\{-1,1\}$ and ${i}_1,\ldots,{i}_m\in\{1,\ldots,n\}$ for arbitrary $m,n\in\Z_{>0}$.
\begin{enumerate}
\item
The Cartesian product ${Q}^n$ is a rack with the binary operation defined by
\begin{align*}
&({a}_1,\ldots,{a}_n)\ast({b}_1,\ldots,{b}_n)
=({a}_1{\ast}^{e_1}{b}_{i_1}{\ast}^{e_2}\cdots{\ast}^{e_m}{b}_{i_m},\ldots,{a}_n{\ast}^{e_1}{b}_{i_1}{\ast}^{e_2}\cdots{\ast}^{e_m}{b}_{i_m}). 
\end{align*}
Moreover, ${\Wai}$ is also a ${Q}^n$-set with the map $\star$ defined by
$$
\upsilon\star({a}_1,\ldots,{a}_n)=\upsilon{\star}^{e_1}{b}_{i_1}{\star}^{e_2}\cdots{\star}^{e_m}{b}_{i_m}.
$$
\item
Let $Q^n$ be the rack and $\Wai$ be the ${Q}^n$-set described in (1).
We define a 2-cochain $\oline{\theta}:{C}_2^{\mathrm{R}}(Q^n)_{\Wai}\to A$ by
\[
\oline{\theta}\left(
\la\upsilon\ra\la({a}_1,\ldots,{a}_n)\ra\la({b}_1,\ldots,{b}_n)\ra
\right)
=
\sum_{p=1}^m\sum_{q=1}^m
e_pe_q\,\theta\left(
\la\upsilon_{p,q}\ra
\la a_{p,q}\ra
\la b_{i_q}\ra
\right)
\]
for any generator $\la\upsilon\ra\la({a}_1,\ldots,{a}_n)\ra\la({b}_1,\ldots,{b}_n)\ra$ of ${C}_2^{\mathrm{R}}(Q^n)_\Wai$, where we set
\begin{align*}
\upsilon_{p,q}
&=\upsilon{\star}^{e_1}a_{i_1}{\star}^{e_2}\cdots{\star}^{e_{p-1}}a_{i_{p-1}}{\star}^{-\delta(e_p,-1)}a_{i_p}{\star}^{e_1}b_{i_1}{\star}^{e_2}\cdots{\star}^{e_{q-1}}b_{i_{q-1}}{\star}^{-\delta(e_q,-1)}b_{i_q},\\
a_{p,q}&=a_{i_p}{\ast}^{e_1}b_{i_1}{\ast}^{e_2}\cdots*^{e_{q-1}}b_{i_{q-1}}{\ast}^{-\delta(e_q,-1)}b_{i_q},
\end{align*}
and where $\delta$ is the Kronecker delta.
Then $\oline{\theta}$ is a shadow rack 2-cocycle.
\end{enumerate}
\end{proposition}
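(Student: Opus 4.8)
The plan is to treat (1) and (2) separately, in each case reducing the problem to the ground rack $Q$ by observing that the proposed operation on $Q^n$ is \emph{diagonal}. Writing $w_{\bm{b}}:=S_{b_{i_m}}^{e_m}\circ\cdots\circ S_{b_{i_1}}^{e_1}\colon Q\to Q$ for $\bm{b}=(b_1,\dots,b_n)$, and $t_{\bm{a}}:=T_{a_{i_m}}^{e_m}\circ\cdots\circ T_{a_{i_1}}^{e_1}\colon\Wai\to\Wai$ for $\bm{a}=(a_1,\dots,a_n)$, the operations in (1) read $\bm{a}*\bm{b}=(w_{\bm{b}}(a_1),\dots,w_{\bm{b}}(a_n))$ and $\upsilon\star\bm{a}=t_{\bm{a}}(\upsilon)$. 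Since each $S_{b_{i_k}}^{e_k}$ and each $T_{a_{i_k}}^{e_k}$ is a bijection, so are $w_{\bm{b}}$ and $t_{\bm{a}}$, which gives the first rack axiom (bijectivity of $S_{\bm{b}}$) and the first $Q^n$-set axiom (bijectivity of $T_{\bm{a}}$).

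For the remaining axioms in (1) I would use the standard fact that for any element $\phi$ of the group generated by $\{S_x\mid x\in Q\}$ in $\operatorname{Sym}(Q)$ one has $\phi\circ S_y^{\,e}\circ\phi^{-1}=S_{\phi(y)}^{\,e}$ for all $y\in Q$ and $e\in\Z$, and that the ``parallel'' composite $\psi$ of $T$'s corresponding to $\phi$ satisfies $\psi\circ T_y^{\,e}\circ\psi^{-1}=T_{\phi(y)}^{\,e}$; both are proved by induction on word length, the single-generator cases being exactly self-distributivity of $Q$ resp.\ the second $Q$-set axiom. Taking $\phi=w_{\bm{c}}$ and $\psi=t_{\bm{c}}$ this yields $w_{\bm{b}*\bm{c}}=w_{\bm{c}}\circ w_{\bm{b}}\circ w_{\bm{c}}^{-1}$ and $t_{\bm{a}*\bm{b}}=t_{\bm{b}}\circ t_{\bm{a}}\circ t_{\bm{b}}^{-1}$. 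Hence the $j$-th coordinate of $(\bm{a}*\bm{b})*\bm{c}$, which is $w_{\bm{c}}(w_{\bm{b}}(a_j))$, coincides with that of $(\bm{a}*\bm{c})*(\bm{b}*\bm{c})$, which is $w_{\bm{b}*\bm{c}}(w_{\bm{c}}(a_j))$; and $(\upsilon\star\bm{a})\star\bm{b}=t_{\bm{b}}(t_{\bm{a}}(\upsilon))=t_{\bm{a}*\bm{b}}(t_{\bm{b}}(\upsilon))=(\upsilon\star\bm{b})\star(\bm{a}*\bm{b})$. This settles (1).

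For (2) the cleanest route is to realize $\oline{\theta}$ as $\theta\circ\Psi_2$, where $\Psi_*\colon C_*^{\mathrm{R}}(Q^n)_\Wai\to C_*^{\mathrm{R}}(Q)_\Wai$ is the chain map (in the relevant degrees) that ``unpacks'' each $Q^n$-letter into the word in $Q$ it encodes. On generators I would put $\Psi_0=\mathrm{id}$, $\Psi_1(\la\upsilon\ra\la\bm{a}\ra)=\sum_{p=1}^m e_p\,\la\upsilon^{(1)}_p\ra\la a_{i_p}\ra$, $\Psi_2(\la\upsilon\ra\la\bm{a}\ra\la\bm{b}\ra)=\sum_{p,q=1}^m e_pe_q\,\la\upsilon_{p,q}\ra\la a_{p,q}\ra\la b_{i_q}\ra$ (the right-hand side of the displayed formula for $\oline{\theta}$), and $\Psi_3(\la\upsilon\ra\la\bm{a}\ra\la\bm{b}\ra\la\bm{c}\ra)=\sum_{p,q,r=1}^m e_pe_qe_r\,\la\upsilon_{p,q,r}\ra\la a_{p,q,r}\ra\la b_{q,r}\ra\la c_{i_r}\ra$, where each subscripted entry is the nested prefix obtained by pushing the indicated letter through the earlier words, with the same $-\delta(e_\bullet,-1)$ corrections in the exponents as in the statement. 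One then verifies $\partial\circ\Psi_k=\Psi_{k-1}\circ\partial$ for $k=1,2,3$; each is a telescoping identity in the index that disappears, the base case being $\sum_p e_p\bigl(\la\upsilon^{(1)}_p\star a_{i_p}\ra-\la\upsilon^{(1)}_p\ra\bigr)=\la\upsilon\star\bm{a}\ra-\la\upsilon\ra$ in $C_0$. Granting this, $\oline{\theta}\circ\partial_3=\theta\circ\Psi_2\circ\partial_3=\theta\circ\partial_3\circ\Psi_3=0$ because $\theta$ is a cocycle, so $\oline{\theta}$ is a shadow rack $2$-cocycle. Equivalently one can bypass chain maps and check the shadow rack $2$-cocycle condition for $\oline{\theta}$ head-on, rewriting $\oline{\theta}$ applied to $\partial_3(\la\upsilon\ra\la\bm{a}\ra\la\bm{b}\ra\la\bm{c}\ra)$ as $\sum_{p,q,r}e_pe_qe_r$ times the shadow rack $2$-cocycle condition for $\theta$ at the ``$(p,q,r)$-subcrossing'' $\la\upsilon_{p,q,r}\ra\la a_{p,q,r}\ra\la b_{q,r}\ra\la c_{i_r}\ra$.

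The only substantial step is this final verification, and its difficulty is entirely one of bookkeeping: one must arrange the prefixes $\upsilon_{p,q}$, $\upsilon_{p,q,r}$, $a_{p,q}$, $a_{p,q,r}$, $b_{q,r}$ so that the six terms produced by $\partial_3$ on the $Q$-side recombine, after summation over the dropped index, into the four terms produced by $\partial_2$ on the $Q^n$-side. The Kronecker-delta corrections $-\delta(e_\bullet,-1)$ in the exponents are present precisely so that this recombination works uniformly whether a given $e_\bullet$ is $+1$ or $-1$; tracking their simultaneous effect on the ``region'' entry and on the ``under-arc'' entry is the one place where care is required. I would first run the computation in the case $m=1$, $e_1=\pm1$ (where $\Psi$ is, after a coordinate projection, either the identity or the standard ``mirror'' chain map), and then read off the general case from the telescoping.
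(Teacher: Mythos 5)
Your proposal is correct and follows essentially the same route as the paper: part (2) is exactly the $m$-parallel (cabling) argument the paper invokes, your chain map $\Psi_*$ being the algebraic formalization of ``the local weight at a crossing equals the signed sum of the local weights at the $m^2$ subcrossings,'' with the $-\delta(e_\bullet,-1)$ shifts playing precisely the role you identify, and your level of detail matches (indeed slightly exceeds) the paper's own, which likewise leaves the telescoping bookkeeping to a figure. The only divergence is in part (1), where you supply a direct argument via the conjugation identities $w_{\bm{b}*\bm{c}}=w_{\bm{c}}\circ w_{\bm{b}}\circ w_{\bm{c}}^{-1}$ and $t_{\bm{a}*\bm{b}}=t_{\bm{b}}\circ t_{\bm{a}}\circ t_{\bm{b}}^{-1}$ while the paper simply cites Proposition~3.1 of the earlier Ishii--Matsuzaki--Murao paper; your argument there is correct.
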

We note that, in general, the rack ${Q}^n$ in Proposition~\ref{prop:quandle rack} is not a quandle even if ${Q}$ is a quandle.

\begin{proof}
(1) This is proved in \cite[Proposition~3.1]{IMM20}.
\par
(2)
It is sufficient to verify that $\oline{\theta}$ vanishes on the chain 
$\partial_3\left(\la\upsilon\ra\la a\ra\la b\ra\la c\ra\right)$ for all $\upsilon\in\Wai$, $a,b,c\in Q^n$.
It is seen that $\oline{\theta}$ vanishes using the fact that $\theta$ is a shadow rack 2-cocycle.
This is observed by considering the geometric meaning of an $m$-parallel operation:
the local weight at a crossing $\chi$ of a \diag{} $D$ corresponds to the sum of the local weights at the corresponding $m^2$ crossings $\chi_{1,1},\ldots,\chi_{m,m}$ in the $m$-paralleled diagram $D^m$,
and $\oline{\theta}(w(\chi;c))\in A$ is the sum of $\theta(w(\chi_{1,1};c^m)),\ldots,\theta(w(\chi_{m,m};c^m))$,
where $c$ is a $(Q_n)_\Wai$-coloring of $D$ and $c^m$ is the corresponding $Q_\Wai$-coloring of $D^m$.
For example, we consider the case where $Q^2$ is the rack and $\Wai$ is the $Q^2$-set defined by
$({a}_1,a_2)\ast({b}_1,{b}_2)=(a_1\ast b_1{\ast}^{-1}b_2\ast b_1\ast b_2,\,a_2\ast b_1{\ast}^{-1}b_2\ast b_1\ast b_2)$ and $\upsilon\star(a_1,a_2)=\upsilon\star b_1{\star}^{-1}b_2\star b_1\star b_2$.
We note that $m=4$, $n=2$, $e_1=e_3=e_4=1, e_2=-1$, and $i_1=i_3=1, i_2=i_4=2$.
In Fig.~\ref{FIG:interpretation_product_rack_cos}, the geometric meaning of $\oline{\theta}$ is illustrated:
the local weight at a crossing $\chi_{p,q}$ in the 4-paralleled diagram is $e_pe_q\la\upsilon_{p,q}\ra\la a_{p,q}\ra\la b_{i_q}\ra$,
where $a_{p,q}$ (resp.~$b_{i_q}$) is a color of the corresponding underarc (resp.~overarc) and $\upsilon_{p,q}$ is a color of the corresponding region.
We have $a_{3,1}=a_1$, $a_{3,2}=a_{3,3}=a_1*b_1{*}^{-1}b_2$, $a_{3,4}=a_1*b_1{*}^{-1}b_2*b_1$ and
$\upsilon_{3,1}=\upsilon\star a_1{\star}^{-1}a_2$, $\upsilon_{3,2}=\upsilon_{3,3}=\upsilon\star a_1{\star}^{-1}a_2\star b_1{\star}^{-1}b_2$, $\upsilon_{3,4}=\upsilon\star a_1{\star}^{-1}a_2\star b_1{\star}^{-1}b_2\star b_1$ for instance.
\end{proof}
		\begin{figure}[htbp]\centerline{
		\includegraphics{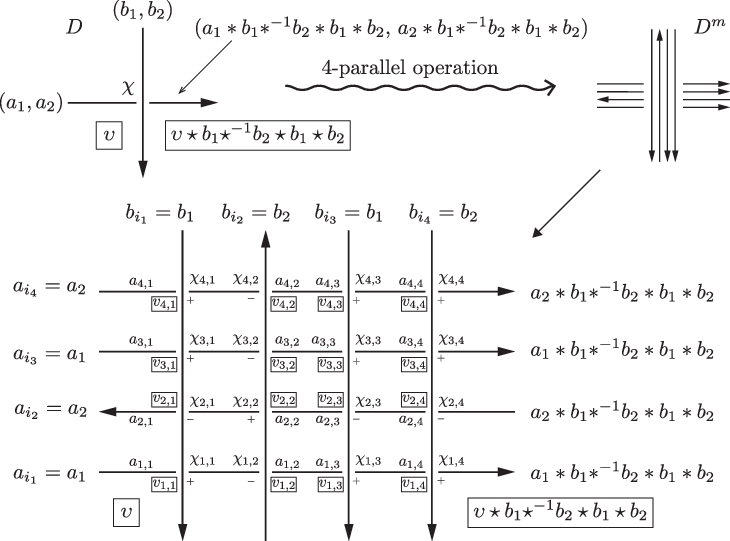}}
		\caption{A geometric interpretation of an $m$-parallel operation.}
		\label{FIG:interpretation_product_rack_cos}
		\end{figure}

%%%%%%%%%%%%%
%%%%%%%%%%%%%
\section{Constructions of shadow \mgr{} 2-cocycles}\label{SEC:2-COCYCLE_mgr}
In this section, we suppose that
${X}=\bigsqcup_{\lambda\in\Lambda}{G}_\lambda$ is an \mgr, ${\Wai}$ is an ${X}$-set and $A$ is an abelian group.
\par
A 2-cochain $\theta:{C}_2(X)_{\Wai}\to{A}$ of $C^*({X};A)_{{\Wai}}$ is a cocycle if and only if the following conditions (1), (2), (3) and (4) are satisfied.
\begin{enumerate}
\item For any ${\upsilon}\in\Wai$ and ${x},{y},{z}\in{X}$,
$$
 \theta\left(
\la{\upsilon}\star{x}\ra\la{y}\ra\la{z}\ra+\la{\upsilon}\ra\la{x}\ra\la{z}\ra+\la{\upsilon}\star{z}\ra\la {x}*{z}\ra\la{y}*{z}\ra
\right)
=\theta\left(
\la{\upsilon}\ra\la{y}\ra\la{z}\ra+\la{\upsilon}\star{y}\ra\la{x}*{y}\ra\la{z}\ra+\la{\upsilon}\ra\la{x}\ra\la{y}\ra
\right).
$$
\item For any ${\upsilon}\in\Wai$, ${x}\in{X}$ and ${y}_1,{y}_2\in{G_\lambda}$,
$$
\theta\left(
\la{\upsilon}\star{x}\ra\la{y}_1,{y}_2\ra+\la{\upsilon}\ra\la{x}\ra\la{y}_1{y}_2\ra
\right)
=\theta\left(
\la{\upsilon}\ra\la{y}_1,{y}_2\ra+\la{\upsilon}\star{y}_1\ra\la{x}*{y}_1\ra\la{y}_2\ra+\la{\upsilon}\ra\la{x}\ra\la{y}_1\ra
\right).
$$
\item For any ${\upsilon}\in\Wai$, ${x}_1,{x}_2\in G_\lambda$ and $y\in X$,
$$
\theta\left(
\la{\upsilon}\star{x}_1\ra\la{x}_2\ra\la{y}\ra+\la{\upsilon}\ra\la{x}_1\ra\la{y}\ra+\la{\upsilon}\star{y}\ra\la {x}_1*{y},{x}_2*{y}\ra
\right)
=\theta\left(
\la{\upsilon}\ra\la{x}_1{x}_2\ra\la{y}\ra+\la{\upsilon}\ra\la{x}_1,{x}_2\ra
\right).
$$
\item For any ${\upsilon}\in\Wai$ and ${x}_1,{x}_2,{x}_3\in{G_\lambda}$,
$$
\theta\left(
\la{\upsilon}\star{x}_1\ra\la{x}_2,{x}_3\ra+\la{\upsilon}\ra\la{x}_1,{x}_2{x}_3\ra
\right)
=\theta\left(
\la{\upsilon}\ra\la{x}_1{x}_2,{x}_3\ra+\la{\upsilon}\ra\la{x}_1,{x}_2\ra
\right).
$$
\end{enumerate}
These conditions come from Example~\ref{example_partial_3}.
We call them the \textit{(shadow) \mgr{} 2-cocycle conditions}.

\begin{proposition}
Let $X=\bigsqcup_{\lambda\in\Lambda}G_\lambda$ be an \mgr.
A 2-cochain $\theta:C_2(X)\to A$ is an \mgr{} 2-cocycle if and only if the set $X\times A=\bigsqcup_{\lambda\in\Lambda}(G_\lambda\times A)$ with
\[
(x,s)*(y,t)=\left(x*y,s+\theta(\la{x}\ra\la{y}\ra)\right),\quad
(x_1,s)(x_2,t)=\left(x_1x_2,s+t+\theta(\la{x_1,x_2}\ra)\right)
\]
is an \mgr.
\end{proposition}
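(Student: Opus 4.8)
The plan is to prove the equivalence by unwinding, coordinate by coordinate, what it means for $X\times A=\bigsqcup_{\lambda\in\Lambda}(G_\lambda\times A)$ to be an \mgr{}: one must check that (a) each $G_\lambda\times A$ is a group under the stated multiplication, and (b) the three axioms of Definition~\ref{def:multiplegrouprack} hold for the stated operation $*$. In every one of these checks the first coordinates of the two sides of the identity to be verified agree automatically, because $X$ is itself an \mgr{} (so $G_\lambda$ is a group, $*$ is self-distributive, $*$ satisfies the \mgr{} axioms in $X$, and $x*e_\lambda=x$); equating the second coordinates, all the $A$-parameters cancel and one is left with precisely one of the \mgr{} $2$-cocycle conditions (1)--(4) for $\theta$. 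Since here $\Wai$ is trivial, the variable $\upsilon$ may be deleted throughout conditions (1)--(4).

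First I would dispose of the group structure. Writing out associativity of $(x_1,s_1)(x_2,s_2)(x_3,s_3)$ and cancelling the $s_i$ leaves exactly condition~(4):
\[
\theta(\la x_1,x_2\ra)+\theta(\la x_1x_2,x_3\ra)=\theta(\la x_2,x_3\ra)+\theta(\la x_1,x_2x_3\ra).
\]
Granting (4), specialising it at $x_1=x_2=e_\lambda$ and at $x_2=x_3=e_\lambda$ gives $\theta(\la e_\lambda,z\ra)=\theta(\la z,e_\lambda\ra)=\theta(\la e_\lambda,e_\lambda\ra)=:c_\lambda$ for all $z\in G_\lambda$, so $(e_\lambda,-c_\lambda)$ is a two-sided identity, and a further specialisation of (4) produces two-sided inverses; this is the classical correspondence between central extensions of $G_\lambda$ by $A$ and group $2$-cocycles. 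Conversely, if $G_\lambda\times A$ is a group it is in particular associative, hence (4) holds. So ``each $G_\lambda\times A$ is a group'' is equivalent to condition~(4).

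Next I would run through the three \mgr{} axioms for $X\times A$. For $x\in X$ and $y_1,y_2\in G_\lambda$, comparing $(x,s)*\big((y_1,t_1)(y_2,t_2)\big)$ with $\big((x,s)*(y_1,t_1)\big)*(y_2,t_2)$ and using $x*(y_1y_2)=(x*y_1)*y_2$ in $X$ collapses to $\theta(\la x\ra\la y_1y_2\ra)=\theta(\la x\ra\la y_1\ra)+\theta(\la x*y_1\ra\la y_2\ra)$, which is condition~(2); putting $y_1=e_\lambda$ in this relation forces $\theta(\la x\ra\la e_\lambda\ra)=0$, which is exactly what makes $(x,s)*(e_\lambda,-c_\lambda)=(x,s)$, so axiom~(1) of Definition~\ref{def:multiplegrouprack} for $X\times A$ is equivalent to condition~(2). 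Similarly, writing out self-distributivity $\big((x,s)*(y,t)\big)*(z,u)=\big((x,s)*(z,u)\big)*\big((y,t)*(z,u)\big)$ and using self-distributivity of $X$ reduces to $\theta(\la x\ra\la y\ra)+\theta(\la x*y\ra\la z\ra)=\theta(\la x\ra\la z\ra)+\theta(\la x*z\ra\la y*z\ra)$, which is condition~(1) once the common summands $\theta(\la y\ra\la z\ra)$ appearing on both sides there are removed. Finally, for $x_1,x_2\in G_\lambda$ and $y\in X$, the elements $(x_1,s_1)*(y,u)$ and $(x_2,s_2)*(y,u)$ lie in one group $G_\mu\times A$ (since $x_1*y,x_2*y\in G_\mu$ for a common $\mu$ by axiom~(3) for $X$), so $\big((x_1,s_1)(x_2,s_2)\big)*(y,u)$ and $\big((x_1,s_1)*(y,u)\big)\big((x_2,s_2)*(y,u)\big)$ both make sense; equating their second coordinates and using $(x_1x_2)*y=(x_1*y)(x_2*y)$ in $X$ gives $\theta(\la x_1,x_2\ra)+\theta(\la x_1x_2\ra\la y\ra)=\theta(\la x_1\ra\la y\ra)+\theta(\la x_2\ra\la y\ra)+\theta(\la x_1*y,x_2*y\ra)$, which is condition~(3). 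Assembling the four equivalences proves the proposition.

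There is no genuine obstacle here: each step is a short direct computation of exactly the kind displayed in Example~\ref{example_partial_3}. The only care needed is bookkeeping — keeping the identity $(e_\lambda,-c_\lambda)$ of $G_\lambda\times A$ straight (in particular the vanishing $\theta(\la x\ra\la e_\lambda\ra)=0$), making sure every product entering axiom~(3) is formed inside a single group $G_\mu\times A$, and confirming that the $2$-cocycle condition extracted from a given \mgr{} requirement is the one bearing the matching number, so that the two lists (1)--(4) line up on the nose.
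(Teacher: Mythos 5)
Your proposal is correct and follows essentially the same route as the paper's proof: both match each requirement for $X\times A$ (associativity/identity/inverses of $G_\lambda\times A$ and the three axioms of Definition~\ref{def:multiplegrouprack}) with the corresponding \mgr{} $2$-cocycle condition (4), (2), (1), (3) by comparing second coordinates, including the observations that $\theta(\la x\ra\la e_\lambda\ra)=0$ follows from condition~(2) and that $\theta(\la e_\lambda,z\ra)=\theta(\la z,e_\lambda\ra)=\theta(\la e_\lambda,e_\lambda\ra)$ and $\theta(\la x,x^{-1}\ra)=\theta(\la x^{-1},x\ra)$ follow from condition~(4).
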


\begin{proof}
Let $\lambda,\mu\in\Lambda$, $x,y,z\in X$, $x_1,x_2,x_3\in G_\lambda$, $y_1,y_2\in G_\mu$ and $s,t,u\in A$ be arbitrary elements, respectively.
\par
We show that the \mgr{} 2-cocycle condition~(4) is satisfied if and only if
$G_\lambda\times A$ is a group with the identity $(e_\lambda,-\theta(\lr{e_\lambda,e_\lambda}))$, and $(x_1,s)^{-1}:=(x_1^{-1},-s-\theta(\lr{e_\lambda,e_\lambda})-\theta(\lr{x_1,x_1^{-1}}))$ is the inverse element of $(x_1,s)\in G_\lambda\times A$.
The \mgr{} 2-cocycle condition~(4) is satisfied if and only if $G_\lambda\times A$ is associative, since
\begin{align*}
  \left((x_1,s)(x_2,t)\right)(x_3,u)
&=\left(x_1x_2x_3,s+t+\theta(\lr{x_1,x_2})+u+\theta(\lr{x_1x_2,x_3})\right),\\
  (x_1,s)\left((x_2,t)(x_3,u)\right)
&=\left(x_1x_2x_3,s+t+u+\theta(\lr{x_2,x_3})+\theta(\lr{x_1,x_2x_3})\right).
\end{align*}
Assume that the \mgr{} 2-cocycle condition~(4) is satisfied.
Then, we have $(x_1,s)(e_\lambda,-\theta(\lr{e_\lambda,e_\lambda}))=(x_1,s)=(e_\lambda,-\theta(\lr{e_\lambda,e_\lambda}))(x_1,s)$,
since $\theta(\lr{e_\lambda,x_1})=\theta(\lr{e_\lambda,e_\lambda})=\theta(\lr{x_1,e_\lambda})$.
We also have $(x_1,s)(x_1,s)^{-1}=\left(e_\lambda,-\theta(\lr{e_\lambda,e_\lambda})\right)=(x_1,s)^{-1}(x_1,s)$, since $\theta(\lr{x_1^{-1},x_1})=\theta(\lr{x_1,x_1^{-1}})$. 
Therefore, the claim holds.
\par
The axiom~(1) of \mgr s is satisfied if and only if the \mgr{} 2-cocycle condition~(2) is satisfied, since
\begin{align*}
  (x,s)*\left((y_1,t)(y_2,u)\right)
&=\left(x*(y_1y_2),s+\theta(\lr{x}\lr{y_1y_2})\right),\\
  ((x,s)*(y_1,t))*(y_2,u)
&=(\left(x*y_1)*y_2,s+\theta(\lr{x}\lr{y_1})+\theta(\lr{x*y_1}\lr{y_2})\right),\\
  (x,s)*\left(e_\mu,-\theta(\lr{e_\mu,e_\mu})\right)
&=\left(x,s+\theta(\lr{x}\lr{e_\mu})\right).
\end{align*}
Here, we note that $\theta(\lr{x}\lr{e_\mu})=0$ if the \mgr{} 2-cocycle condition (2) is satisfied.
\par
The axiom~(2) of \mgr s is satisfied if and only if the \mgr{} 2-cocycle condition~(1) is satisfied, since
\begin{align*}
  \left( (x,s)*(y,t)\right)*(z,u)
&=\left((x*y)*z,s+\theta(\lr{x}\lr{y})+\theta(\lr{x*y}\lr{z})\right),\\
  \left((x,s)*(z,u)\right)*((y,t)*(z,u))
&=((x*z)*(y*z),s+\theta(\lr{x}\lr{z})+\theta(\lr{x*z}\lr{y*z})).
\end{align*}

The axiom~(3) of \mgr s is satisfied if and only if the \mgr{} 2-cocycle condition~(3) is satisfied, since
\begin{align*}
  \left((x_1,s)(x_2,t)\right)*(y,u)
&=\left((x_1x_2)*y, s+t+\theta(\lr{x_1,x_2})+\theta(\lr{x_1x_2}\lr{y})\right),\\
  ((x_1,s)*(y,u))((x_2,t)*(y,u))
&=\left((x_1*y)(x_2*y),s+\theta(\lr{x_1}\lr{y})+t+\theta(\lr{x_2}\lr{y})+\theta(\lr{x_1*y,x_2*y})\right).
\end{align*}
This completes the proof.
\end{proof}

Next proposition claims that we can obtain a shadow \mgr{} 2-cocycle $\otilde \theta:{C}_2(Q\times\Z_{k\type{Q_\Wai}})_{{\Wai}}\to A$ from a given shadow rack 2-cocycle $\theta:{C}_2^{\mathrm{{R}}}({Q})_{{\Wai}}\to A$.

\begin{proposition}\label{prop_main_mgr_ccycle}
Let $\theta:{C}_2^{\mathrm{{R}}}({Q})_{{\Wai}}\to A$ be a shadow rack 2-cocycle.
For any $\upsilon\in\Wai$, $a,b\in Q$ and $i,j\in\Z\setminus\{0\}$, we set
$$\displaystyle\theta_{\upsilon;\,a,i;\,b,j}=\ccycl{|i|}{|j|}.$$
Let $X:=Q\times\Z_{k\type{Q_\Wai}}$ be the associated \mgr{} and $Y$ the $X$-set described in Remark~\ref{frequentlyused_mgr}.
\begin{enumerate}
\item Suppose that $k\type{Q_\Wai}=\infty$, that is, $X=Q\times\Z$.
We define a 2-cochain $\otilde \theta:{C}_2(X)_{{\Wai}}\to A$ as follows:
for any generators $\la\upsilon\ra\la({a},i)\ra\la({b},j)\ra$ and $\la\upsilon\ra\la(c,i),(c,j)\ra$ of ${C}_2\left(X\right)_{\Wai}$,
\begin{align*}
\otilde\theta\left(\la\upsilon\ra\la({a},i)\ra\la({b},j)\ra\right)&= \left\{
\begin{array}{ll}
\phantom{-}\theta_{\upsilon;\,a,i;\,b,j} & (i>0,\,j>0),\\%1
-          \theta_{\upsilon\star^{j}b;\,a*^{j}b,i;\,b,-j}& (i>0,\,j<0),\\%2
-          \theta_{\upsilon\star^{i}a;\,a,-i;\,b,j}& (i<0,\,j>0),\\%3
\phantom{-}\theta_{\upsilon\star^{i}a\star^{j}b;\,a*^{j}b,-i;\,b,-j}& (i<0,\,j<0),\\%4
\phantom{-}0& (otherwise),
\end{array}
\right.\\
\otilde\theta\left(\la\upsilon\ra\la({c},i),({c},j)\ra\right)&=0.
\end{align*}
Then $\otilde{\theta}$ is a shadow \mgr{} 2-cocycle.
\item Suppose that $k\type{Q_\Wai}\ne\infty$.
We define a 2-cochain $\otilde \theta:{C}_2(X)_{{\Wai}}\to A$ as follows:
for any generators $\la\upsilon\ra\la({a},i)\ra\la({b},j)\ra$ and $\la\upsilon\ra\la(c,i),(c,j)\ra$ of ${C}_2\left(X\right)_{\Wai}$,
\begin{align*}
\otilde\theta\left(\la\upsilon\ra\la({a},i)\ra\la({b},j)\ra\right)&=\theta_{\upsilon;a,i;b,j},\\
\otilde\theta\left(\la\upsilon\ra\la({c},i),({c},j)\ra\right)&=0,
\end{align*}
where $1\le i\le k\type{Q_\Wai}$ and $1\le j\le k\type{Q_\Wai}$.
\begin{enumerate}
\item[(i)]
$\otilde{\theta}$ is a shadow \mgr{} 2-cocycle if the following equations hold for any $\upsilon\in\Wai$ and $a,b\in{Q}$:
\begin{align*}
\sum_{p=1}^{k\type Q_\Wai}
\theta\left(\la\upsilon\star^{p-1}a\ra\la a\ra\la b\ra\right)&=0,\\
\sum_{q=1}^{k\type Q_\Wai}
\theta\left(\la\upsilon\star^{q-1}b\ra\la a*^{q-1}b\ra\la b\ra\right)&=0.
\end{align*}
\item[(ii)]
$\otilde{\theta}$ is a shadow \mgr{} 2-cocycle if $k\alpha=0$ for any $\alpha\in A$.
In particular, $\otilde{\theta}$ is a shadow \mgr{} 2-cocycle if $A$ is finite and $k=\#A$.
\end{enumerate}
\end{enumerate}
\end{proposition}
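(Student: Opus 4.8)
I would verify the four shadow \mgr{} 2-cocycle conditions (1)--(4) for $\widetilde\theta$, which by Example~\ref{example_partial_3} amount to $\widetilde\theta(\partial_3(w))=0$ for the four types of generators $w$ of $C_3(X)_\Wai$. Write a general element of $X=Q\times\Z_{k\type{Q_\Wai}}$ as $(a,i)$ with $a\in Q$, and recall that $(a,i)*(c,k)=(a*^kc,i)$ and $\upsilon\star(a,i)=\upsilon\star^ia$. Two elementary identities will be used throughout: applying the $X$-set axiom to $(a,i)$ and $(b,j)$ gives $\upsilon\star^ia\star^jb=\upsilon\star^jb\star^i(a*^jb)$, while the underlying family-of-racks axioms give $a*^ib*^jb=a*^{i+j}b$ and $\upsilon\star^ic\star^jc=\upsilon\star^{i+j}c$. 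Moreover $a*^{k\type{Q_\Wai}}b=a$ and $\upsilon\star^{k\type{Q_\Wai}}c=\upsilon$, since $\type{Q}$ divides $\type{Q_\Wai}$ which divides $k\type{Q_\Wai}$; hence every summand $\theta(\la\upsilon\star^{p-1}a\star^{q-1}b\ra\la a*^{q-1}b\ra\la b\ra)$ of $\theta_{\upsilon;a,i;b,j}$ is $k\type{Q_\Wai}$-periodic in each of $p$ and $q$.

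Condition~(4) holds trivially, since every term is $\widetilde\theta$ of a length-two generator $\la\cdot\ra\la(c,\cdot),(c,\cdot)\ra$, hence zero. Because $\widetilde\theta$ vanishes on all length-two generators, conditions~(2) and (3)---after putting $x=(a,i)$, $y_1=(b,j_1)$, $y_2=(b,j_2)$, respectively $x_1=(a,i_1)$, $x_2=(a,i_2)$, $y=(b,j)$, and deleting the vanishing terms---reduce to the concatenation identities
\begin{align*}
\widetilde\theta\bigl(\la\upsilon\ra\la(a,i)\ra\la(b,j_1+j_2)\ra\bigr)
&=\widetilde\theta\bigl(\la\upsilon\star^{j_1}b\ra\la(a*^{j_1}b,i)\ra\la(b,j_2)\ra\bigr)+\widetilde\theta\bigl(\la\upsilon\ra\la(a,i)\ra\la(b,j_1)\ra\bigr),\\
\widetilde\theta\bigl(\la\upsilon\ra\la(a,i_1+i_2)\ra\la(b,j)\ra\bigr)
&=\widetilde\theta\bigl(\la\upsilon\star^{i_1}a\ra\la(a,i_2)\ra\la(b,j)\ra\bigr)+\widetilde\theta\bigl(\la\upsilon\ra\la(a,i_1)\ra\la(b,j)\ra\bigr).
\end{align*}
When every $\Z$-index involved is positive, both sides of each identity expand, by the definition of $\theta_{\upsilon;a,i;b,j}$, into double sums of rack weights; using the elementary identities above to normalize base points and underarc labels (for instance $\upsilon\star^{j_1}b\star^{p-1}(a*^{j_1}b)=\upsilon\star^{p-1}a\star^{j_1}b$ and $(a*^{j_1}b)*^{q-1}b=a*^{j_1+q-1}b$), each identity becomes a bookkeeping of index ranges; no cocycle property of $\theta$ is needed here.

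In case~(1) I would then treat the remaining sign patterns of $(i,j_1,j_2)$ and $(i_1,i_2,j)$ one at a time; the piecewise formula defining $\widetilde\theta$ has been arranged precisely so that replacing a negative index by its absolute value, shifting the corresponding base point (by $\star^ia$ or $\star^jb$) and underarc label (by $*^jb$), and attaching a sign for each reversed index, reduces every case to the positive one. In case~(2) there are no negative indices, but one must handle the wrap-around $j_1+j_2>k\type{Q_\Wai}$, respectively $i_1+i_2>k\type{Q_\Wai}$: by the $k\type{Q_\Wai}$-periodicity of the summands, the overshooting double sum exceeds the correct one by exactly one full period, so each concatenation identity holds if and only if
\[
\sum_{q=1}^{k\type{Q_\Wai}}\theta\bigl(\la\upsilon\star^{q-1}b\ra\la a*^{q-1}b\ra\la b\ra\bigr)=0
\quad\text{and}\quad
\sum_{p=1}^{k\type{Q_\Wai}}\theta\bigl(\la\upsilon\star^{p-1}a\ra\la a\ra\la b\ra\bigr)=0
\]
for all $\upsilon\in\Wai$ and $a,b\in Q$ (the reduction from the a priori family of equations indexed by $i$, respectively $j$, to just these two again uses the elementary identities). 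This is hypothesis~(i). Finally (ii) implies (i): by $a*^{\type{Q_\Wai}}b=a$ and $\upsilon\star^{\type{Q_\Wai}}c=\upsilon$, each full-period sum above equals $k$ times the corresponding sum over a single period of length $\type{Q_\Wai}$, hence vanishes when $k\alpha=0$ for all $\alpha\in A$; and a finite $A$ satisfies $(\#A)\alpha=0$, so $k=\#A$ works.

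It remains to verify condition~(1), which is the only place the rack 2-cocycle condition for $\theta$ enters. Putting $x=(a,i)$, $y=(b,j)$, $z=(c,k)$ and expanding each $\widetilde\theta$-term into a double sum of rack weights---again using the elementary identities and the family axiom (e.g.\ $(a*^kc)*^{q-1}(b*^kc)=a*^{q-1}b*^kc$) to normalize the entries---the required identity becomes a linear relation among rack weights that is a telescoping sum of instances of the rack 2-cocycle condition. The transparent way to organize this is the geometric interpretation already used in the proof of Proposition~\ref{prop:quandle rack}: replace an arc coloured $(a,i)$ by $|i|$ parallel arcs coloured $a$, oriented according to the sign of $i$, so that $\widetilde\theta$ evaluated on the local weight of a crossing equals the sum of $\theta$ evaluated on the local weights of the $|ij|$ corresponding crossings of the parallelized rack-coloured diagram; an R3 move in the \mgr{}-coloured \diag{} then becomes a sequence of R2 and R3 moves in the parallelized diagram, the R2 moves contributing cancelling pairs of $\theta$-weights and each R3 move preserving the total $\theta$-weight by the rack 2-cocycle condition. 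The step I expect to be the main obstacle is exactly this sign bookkeeping: confirming, in each sign pattern of the $\Z$-indices, that the piecewise definition of $\widetilde\theta$ is the one produced by parallelization with orientation reversals---equivalently, that after reindexing the leftover rack weights assemble into genuine instances of the rack 2-cocycle condition of $\theta$.
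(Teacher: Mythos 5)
Your proposal is correct and follows essentially the same route as the paper: reduce to the four \mgr{} 2-cocycle conditions, note that (4) is trivial and that (2), (3) are index-bookkeeping concatenation identities requiring no hypothesis on $\theta$ (except for the wrap-around in the finite case, which is exactly where hypothesis (i) enters), and verify (1) via the parallelization picture, which is where the rack 2-cocycle condition is used. The only cosmetic difference is in (2)(i), where the paper shows the finite $\widetilde{\theta}$ is the periodic reduction of the infinite one and inherits the cocycle property from part (1), while you analyze the wrap-around in the concatenation identities directly; both arguments rest on the same periodicity observation and the same two vanishing sums.
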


\begin{proof}
(1)
It is sufficient to verify that $\otilde{\theta}$ vanishes on the following chains:
\begin{align}
&\partial_3(\la\upsilon\ra\la x\ra\la y\ra\la z\ra)\label{eq:rackcos},\\
&\partial_3(\la x\ra\la y_1,y_2\ra),\quad\partial_3(\la x_1,x_2\ra\la y\ra)\label{eq:mgrcos1},\\
&\partial_3(\la x_1,x_2,x_3\ra).\label{eq:mgrcos2}
\end{align}
On chains in \eqref{eq:rackcos}, it is seen that $\otilde{\theta}$ vanishes using the fact that $\theta$ is a shadow rack 2-cocycle.
On chains in \eqref{eq:mgrcos1}, we see that $\otilde{\theta}$ vanishes without using any assumption on $\theta$.
These are observed by considering the geometric meaning of a parallel operation according to the second coordinate $i$ of an element of $Q\times\Z$ as illustrated in Fig.~\ref{FIG_mgr_parallel}:
we take $-i$ edges with opposite directions if $i$ is negative.
The local weight at a crossing $\chi$ of a \diag{} $D$ corresponds to the sum of the local weights at the corresponding $ij$ crossings $\chi_{1,1},\ldots,\chi_{i,j}$ in the paralleled diagram $D'$,
and $\otilde{\theta}(w(\chi;c))\in A$ is the sum of $\theta(w(\chi_{1,1};c')),\ldots,\theta(w(\chi_{i,j};c'))$,
where $c$ is an $X_\Wai$-coloring of $D$ and $c'$ is the corresponding $Q_\Wai$-coloring of $D'$.
In Fig.~\ref{FIG_mgr_parallel}, 
the local weight at a crossing $\chi_{p,q}$ in the paralleled diagram is $\la\upsilon\star^{p-1}a\star^{q-1}b\ra\la a*^{q-1}b\ra\la b\ra$,
where $a*^{q-1}b$, $b$ and $\upsilon\star^{p-1}a\star^{q-1}b$ are the colors of the corresponding underarc, overarc and region, respectively.
By the assumption $\otilde{\theta}\left(\la\upsilon\ra\la({c},i),({c},j)\ra\right)=0$, we see that $\otilde{\theta}$ vanishes on chains in \eqref{eq:mgrcos2}.
\par
(2)
(i)
We denote by $\otilde\theta_\infty$ the shadow \mgr{} 2-cocycle $\otilde\theta$ defined in (1) of Proposition~\ref{prop_main_mgr_ccycle}.
Let $i$ and $j$ be positive integers with $i,j\le \kappa$, where we put $\kappa=k\type{Q_\Wai}$.
It is sufficient to show that $\otilde\theta_\infty\left(\la\upsilon\ra\la({a},i+s\kappa)\ra\la({b},j+t\kappa)\ra\right)=\otilde\theta\left(\la\upsilon\ra\la({a},i)\ra\la({b},j)\ra\right)$ for any $\upsilon\in\Wai$, $a,b\in Q$ and $s,t\in\Z$.
We note that $\theta_{\upsilon;\,a,\kappa;\,b,1}=0$ and $\theta_{\upsilon;\,a,1;\,b,\kappa}=0$ by the assumption.
In the case that $i+s\kappa>0$ and $j+t\kappa>0$, we have
\begin{align*}
\otilde\theta_\infty\left(\la\upsilon\ra\la({a},i+s\kappa)\ra\la({b},j+t\kappa)\ra\right)
&=\theta_{\upsilon;\,a,i+s\kappa;\,b,j+t\kappa}\\
&=\sum_{p=1}^{i+s\kappa}\theta_{{\upsilon}\star^{p-1}a;\,a,1;\,b,j+t\kappa}\\
&=\sum_{p=1}^{i}\theta_{{\upsilon}\star^{p-1}a;\,a,1;\,b,j}\\
&=\theta_{{\upsilon};\,a,i;\,b,j}\\
&=\otilde\theta\left(\la\upsilon\ra\la({a},i)\ra\la({b},j)\ra\right),
\end{align*}
where, the third equation follows from the assumption.
In the other cases, we have $\otilde\theta_\infty\left(\la\upsilon\ra\la({a},i+s\kappa)\ra\la({b},j+t\kappa)\ra\right)=\otilde\theta\left(\la\upsilon\ra\la({a},i)\ra\la({b},j)\ra\right)$ in the same manner.
\par
(ii)
By using (i), it suffices to prove that
$\theta_{\upsilon;a,\kappa;b,1}=\theta_{\upsilon;a,1;b,\kappa}=0$.
We have
\begin{align*}
\theta_{\upsilon;a,\kappa;b,1}
=\sum_{{p}=1}^{\kappa}\theta\left(\la\upsilon\star^{{p}-1}a\ra\la{a}\ra\la{b}\ra\right)
=k\cdot\theta\left(\sum_{{p}=1}^{\type{Q_\Wai}}\la\upsilon\star^{{p}-1}a\ra\la{a}\ra\la{b}\ra\right)
=0.
\end{align*}
Similarly, we have
$\theta_{\upsilon;a,1;b,\kappa}=0$.
\end{proof}

		\begin{figure}[htbp]\centerline{
		\includegraphics{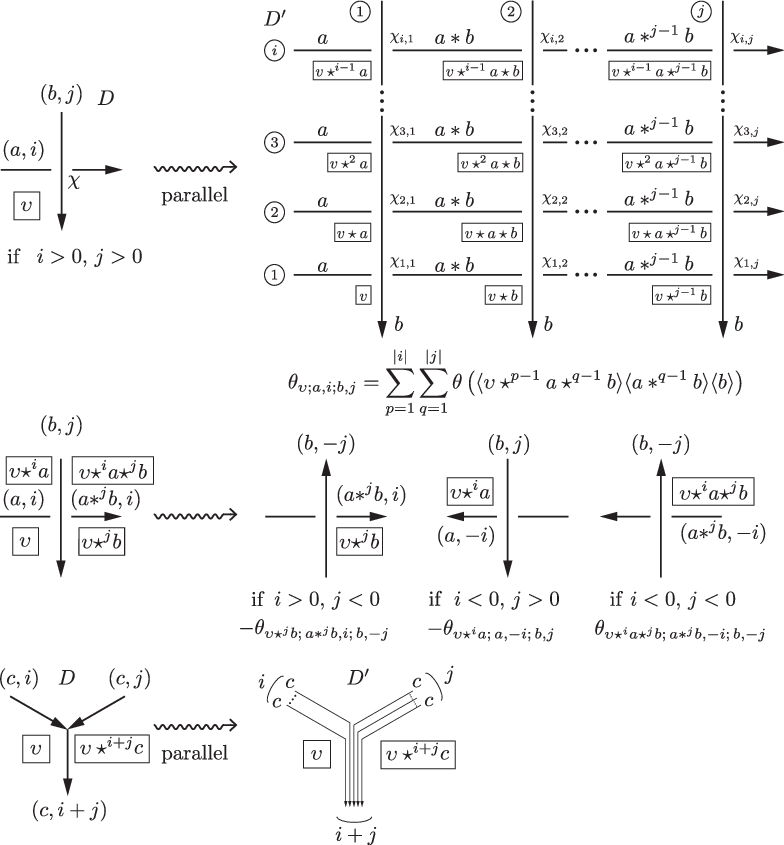}}
		\caption{A geometric interpretation of a parallel operation.}
		\label{FIG_mgr_parallel}
		\end{figure}

%%%%%%%%%%%%%
%%%%%%%%%%%%%
\section{Symmetries of \oss s}\label{SEC:SYMMETRIES}
In this section, we assume that an \oss{} may have closed components.
If an \oss{} $F$ is connected and closed, we have the \oss{} $F^\circ$ with a boundary by removing an open disk from $F$.
Two closed and connected \oss s $F$ and $F'$ are equivalent if and only if $F^\circ$ and $F'^\circ$ are equivalent (see~\cite{Matsuzaki19}). 
Then, we often identify $F$ with $F^\circ$ and regard a \diag{} of $F^\circ$ as that of $F$.
\par
Let define some terminology on symmetries of an \oss{} $F$.
If $F\cong -F$ then $F$ is \textit{invertible}.
By considering combinations of invertibility and chirality, we can give the following  five definitions of symmetries\footnote{If $F$ satisfies (1), (3) or (4), $F$ is called \textit{amphicheiral}.
If $F$ satisfies (2) or (5), $F$ is called \textit{chiral}.}.
\begin{enumerate}
\item If $-F\cong F\cong F^*$, $F$ is \textit{fully amphicheiral}.
\item If $-F\cong F\not\cong F^*$, $F$ is \textit{reversible}.
\item If $-F\not\cong F\cong F^*$, $F$ is \textit{positive amphicheiral}.
\item If $-F\not\cong F\not\cong F^*$ and $F\cong-F^*$, $F$ is \textit{negative amphicheiral}.
\item If $-F\not\cong F\not\cong F^*$ and $F\not\cong-F^*$, $F$ is \textit{fully chiral}.
\end{enumerate}
We denoted by $O_g$ the connected and closed \oss{} with genus $g\ge0$ standardly embedded in $S^3$, that is, it bounds two handlebodies in $S^3$.
Clearly, $O_g$ is fully amphicheiral.
\par
Evaluating our invariant (by using Mathematica), we give an example of positive amphicheiral \oss s  that have a boundary (Example.~\ref{positive_amphicheiral}),
and also give examples of these five symmetric types of closed \oss s respectively for any genus $g\ge2$ (Proposition~\ref{proposition:table}).

\begin{example}\label{example:prime}
Let $3_1$ (resp.~$4_1$) be the \oss{} obtained from the boundary of a regular neighborhood of the right-handed trefoil (resp.~figure-eight knot) such that its front side is facing $\infty\in S^3$ as depicted in Fig.~\ref{FIG:prime_surfaces}.
Since the right-handed trefoil (resp.~figure-eight knot) is chiral (resp.~amphicheiral), then $3_1$ (resp.~$4_1$) is fully chiral (resp.~negative amphicheiral).
Here, we note that $-F\not\cong F\not\cong F^*$
if $F$ is not standardly embedded and has genus 1,
since one of the connected components of its exterior is the interior of a solid torus and the other is not.
Let $2_1^2$ be the oriented annulus so that it is an Seifert surface of the oriented Hopf link as illustrated in Fig.~\ref{FIG:prime_surfaces}.
Since any oriented annulus is always invertible, and the linking numbers of the boundaries of $2_1^2$ and $(2_1^2)^*$ are different, then $2_1^2$ is reversible.
\end{example}
		\begin{figure}[htbp]\centerline{
		\includegraphics{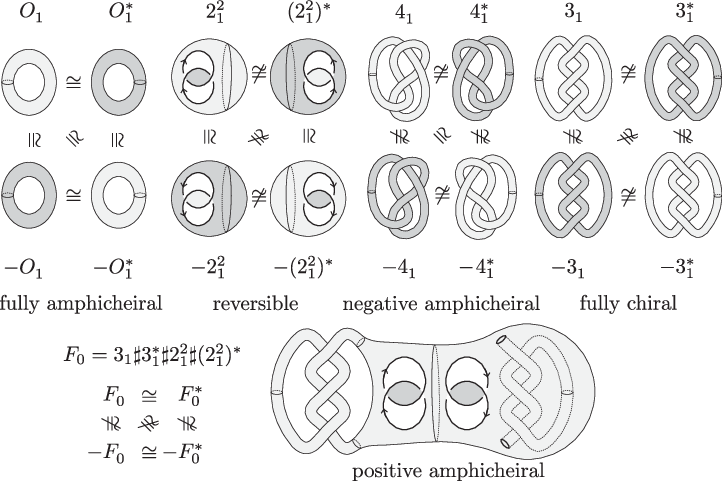}}
		\caption{Oriented spatial surfaces $O_1$, $3_1$, $4_1$ and $2_1^2$ etc.}
		\label{FIG:prime_surfaces}
		\end{figure}

Suppose that $S^3$ has a fixed orientation.
Let $(F_i\subset S^{3})$ be a pair of a connected \oss{} and the 3-sphere ($i=1,2$).
Let $B_i\subset S^{3}$ be a 3-ball such that
$B_i\cap \partial F_i=\emptyset$, and
$\partial B_i$ transversally intersects with $F_i$,
and $B_i\cap F_i$ is a disk ($i=1,2$).
The \textit{connected sum} $(F_1\subset S^{3})\sharp(F_2\subset S^{3})$, simply $F_1\sharp F_2$, is a new pair of a connected \oss{} and the $3$-sphere obtained by matching the boundaries
$\partial(S^{3}\setminus\operatorname{Int}B_1)$ and $\partial(S^{3}\setminus \operatorname{Int}B_2)$ using an orientation-reversing homeomorphism $r$ such that $r(\partial(F_1\setminus \operatorname{Int}B_1))=\partial(F_2\setminus \operatorname{Int}B_2)$, and that $r|_{F_1\setminus \operatorname{Int}B_1}$ is also orientation-reversing.
Up to equivalence, the operation $\sharp$ is well-defined, associative and commutative by the Alexander's theorem \cite{A1924} and the homogeneity theorem of Newman-Gugenheim \cite{G1953}.

\begin{example}\label{positive_amphicheiral}
We show that the \oss{} $F_0=3_1\sharp3_1^*\sharp 2_1^2\sharp(2_1^2)^*$ is positive amphicheiral (see Fig.~\ref{FIG:prime_surfaces}).
Since $F_0^*\cong3_1^*\sharp 3_1\sharp (2_1^2)^*\sharp2_1^2\cong F_0$,
it suffices to confirm $F_0\not\cong-F_0^*$.
Unfortunately, none of coloring numbers distinguish $F_0$ and $-F_0^*$ (see Proposition~\ref{prop:coloring_not_distinguish}).
We then prepare a shadow rack 2-cocycle.
Let $R_p$ be the dihedral quandle for a prime number $p$.
The following 2-cochain $\theta_p:C_2^\mathrm{R}(R_p)_{R_p}\to\Z_p$ is a shadow rack 2-cocycle of $C_\mathrm{R}^*(R_p;\Z_p)_{R_p}$;
\[
 \theta_p\left(\la\ai\ra\la\jei\ra\la\kei\ra\right)
=\left[(\ai-\jei)\frac{\,(\jei^p+(2\kei-\jei)^p-2\kei^p)\,}{p}\right],
\]
for any generator $\la\ai\ra\la\jei\ra\la\kei\ra\in C^{\mathrm R}_2(R_p)_{R_p}$, which is known as the Mochizuki's 3-cocycle (see Remark~\ref{REM:iikae_ccycle}).
Let $Q=(R_3)^3$ be the rack, described in Proposition~\ref{prop:quandle rack},
defined by $$(a_1,a_2,a_3)\ast(b_1,b_2,b_3)=(a_1\ast b_1,\,a_2\ast b_1,\,a_3\ast b_1),$$
and let $R_3$ be the $Q$-set defined by $\upsilon\ast(b_1,b_2,b_3)=\upsilon\ast b_1$, where $\type Q_{R_3}=2$.
By Proposition~\ref{prop:quandle rack}, we have the shadow rack 2-cocycle $\theta:=\oline{\theta_3}$ of $C^*_\mathrm{R}(Q;\Z_3)_{R_3}$.
Moreover, we obtain the shadow \mgr{} 2-cocycle $\otilde{\theta}$ of $C^*(Q\times\Z_2;\Z_3)_{R_3}$
since it satisfies the equations in Proposition~\ref{prop_main_mgr_ccycle} (2)(i).
We obtain $\Phi_{\otilde{\theta}}(F_0)=\left\{0_{\ul{2880}},1_{\ul{1728}},2_{\ul{1152}}\right\}$,
where we represent the multiplicity of elements of a multiset by using subscripts with underlines.
Then $\Phi_{\otilde{\theta}}(-F_0^*)=-\Phi_{\otilde{\theta}}(F_0)=\left\{0_{\ul{2880}},1_{\ul{1152}},2_{\ul{1728}}\right\}$ by Proposition~\ref{prop:minusinvers_localchainminus}.
Therefore, it holds that $F_0\not\cong-F_0^*$.
\end{example}
For Proposition~\ref{proposition:table}, we prepare the following lemma.
Set
$F_1=4_1\sharp(-4_1)$,
$F_2=3_1\sharp(-3_1)$,
$F_3=3_1\sharp3_1^* $,
$F_4=4_1$ and
$F_5=3_1$ as illustrated in Fig.~\ref{FIG_TABLE}.
		\begin{figure}[htbp]\centerline{
		\includegraphics{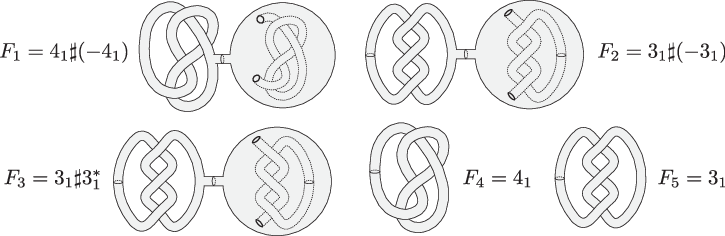}}
		\caption{Five closed \oss s.}
		\label{FIG_TABLE}
		\end{figure}
		
\begin{lemma}\label{essential_lemma}
For any $g\ge0$, it holds that $F_i\sharp O_g\not\cong-(F_i\sharp O_g)^*$ $(i=2,3,5)$.
\end{lemma}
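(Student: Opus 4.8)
The idea is to detect the asymmetry with a shadow \mgr{} cocycle invariant, after observing that forming a connected sum with the standard surface $O_{g}$ only rescales the multiplicities of such an invariant. I will use a shadow \mgr{} $2$-cocycle $\otilde{\theta}$ of the kind built in Example~\ref{positive_amphicheiral}: a shadow rack $2$-cocycle $\theta$ of a rack $Q$ (for instance $\theta=\oline{\theta_{3}}$ on $Q=(R_{3})^{3}$, obtained from Mochizuki's $3$-cocycle $\theta_{3}$ by Proposition~\ref{prop:quandle rack}) gives, by Proposition~\ref{prop_main_mgr_ccycle}, a shadow \mgr{} $2$-cocycle $\otilde{\theta}$ of $C^{*}(X;A)_{\Wai}$ with $X=Q\times\Z_{k\type{Q_{\Wai}}}$. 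The feature I will exploit is that such a $\otilde{\theta}$ vanishes on every generator $\la\upsilon\ra\la x_{1},x_{2}\ra$ with $x_{1},x_{2}$ in a common group of $X$, and hence on the local weight at every vertex of a \diag. Each $F_{i}\sharp O_{g}$ is closed and connected, so I identify it with $(F_{i}\sharp O_{g})^{\circ}$, whose $S^{1}$-orientation is empty; thus $\Phi_{\otilde{\theta}}(F_{i}\sharp O_{g})$ is defined, and by Proposition~\ref{prop:minusinvers_localchainminus} we have $\Phi_{\otilde{\theta}}\bigl(-(F_{i}\sharp O_{g})^{*}\bigr)=-\Phi_{\otilde{\theta}}(F_{i}\sharp O_{g})$. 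So it will suffice, for each $i\in\{2,3,5\}$, to find $Q$ and $\theta$ for which the multiset $\Phi_{\otilde{\theta}}(F_{i}\sharp O_{g})$ is not equal to its negation.

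\textbf{Reduction to the case $g=0$.} The heart of the argument will be a connected-sum formula: \emph{for every \oss{} $G$ there is a positive integer $N_{g}$, depending only on $g$ and $X$, with $\Phi_{\otilde{\theta}}(G\sharp O_{g})=N_{g}\cdot\Phi_{\otilde{\theta}}(G)$}, where $N_{g}\cdot M$ denotes the multiset obtained from $M$ by multiplying every multiplicity by $N_{g}$. Granting this, suppose $F_{i}\sharp O_{g}\cong-(F_{i}\sharp O_{g})^{*}$; then, by the invariance of $\Phi_{\otilde{\theta}}$, Proposition~\ref{prop:minusinvers_localchainminus} and the formula, $N_{g}\Phi_{\otilde{\theta}}(F_{i})=\Phi_{\otilde{\theta}}(F_{i}\sharp O_{g})=\Phi_{\otilde{\theta}}\bigl(-(F_{i}\sharp O_{g})^{*}\bigr)=-\Phi_{\otilde{\theta}}(F_{i}\sharp O_{g})=-N_{g}\Phi_{\otilde{\theta}}(F_{i})$, whence $\Phi_{\otilde{\theta}}(F_{i})=-\Phi_{\otilde{\theta}}(F_{i})$. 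So it is enough to check, for $i=2,3,5$, that $\Phi_{\otilde{\theta}}(F_{i})\neq-\Phi_{\otilde{\theta}}(F_{i})$ for a suitable $Q$ and $\theta$; since $F_{i}\sharp O_{0}\cong F_{i}$, this also settles $g=0$. This remaining verification is finite and is carried out exactly as in Example~\ref{positive_amphicheiral}: with $Q=(R_{3})^{3}$ and $\theta=\oline{\theta_{3}}$ (so that $\otilde{\theta}$ is a shadow \mgr{} $2$-cocycle by Proposition~\ref{prop_main_mgr_ccycle} (2)(i)), one evaluates $\Phi_{\otilde{\theta}}$ on fixed \diag s of $3_{1}$, $3_{1}\sharp(-3_{1})$ and $3_{1}\sharp3_{1}^{*}$ with Mathematica and checks in each case that the multiset obtained is not symmetric under negation.

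\textbf{Proving the connected-sum formula.} I would fix \diag s $D_{G}$ of $G$ and $D_{g}$ of $O_{g}$, form the standard diagrammatic connected sum $D=D_{G}\sharp D_{g}$, and identify a coloring of $D$ with a pair of colorings of $D_{G}$ and of $D_{g}$ agreeing on the colors along the connecting band, the $2$-chain $W(D;\si)$ being the sum of the $2$-chains of the two pieces. For $D_{g}$ I would take a standard diagram of the trivial genus-$g$ surface $O_{g}$---for instance, realizing $O_{g}$ as the $g$-fold connected sum of the standard torus $O_{1}$ and taking $g$ copies of a fixed diagram $D_{1}$ of $O_{1}$. Two facts about this diagram then give the formula. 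First, every $X_{\Wai}$-coloring of $D_{g}$ contributes $0$ under $\otilde{\theta}$: this is verified on the standard diagram, using that $\otilde{\theta}$ annihilates vertex weights together with the explicit form of $\theta_{3}$, $\oline{\theta_{3}}$ and $\otilde{\theta}$ (in particular that $\theta$ vanishes when its two acted-upon arguments coincide). Second, the number of $X_{\Wai}$-colorings of $D_{g}$ extending a prescribed color along the connecting band is a constant $N_{g}$: an $X_{\Wai}$-coloring of $D_{g}$ is determined by its underlying $X$-coloring and the color of a single region, and a direct count on the standard diagram shows that the number of $X$-colorings with a fixed value on the connecting band does not depend on that value. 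Combining the two, each $X_{\Wai}$-coloring of $D_{G}$ lifts to exactly $N_{g}$ colorings of $D$, all of the same $\otilde{\theta}$-weight, so $\Phi_{\otilde{\theta}}(D)=N_{g}\cdot\Phi_{\otilde{\theta}}(D_{G})$.

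\textbf{The main obstacle.} The delicate point is precisely this last step---that the standard $O_{g}$ admits a diagram for which (i) every coloring has vanishing $\otilde{\theta}$-weight and, above all, (ii) the number of colorings extending a prescribed color along the connecting band is independent of that prescription. Item~(ii) is a homogeneity statement for the colorings of the trivial genus-$g$ surface; I expect to establish it by induction over the $g$ handles, reducing to the corresponding statement for the standard torus $O_{1}$, which can be checked on an explicit small diagram, and then to deduce~(i) from the explicit cocycle formulas. The remaining ingredients---the diagrammatic description of $\sharp$, the decomposition of colorings of a connected sum into compatible pieces, and the three base-case evaluations---are routine bookkeeping or pure computation. (For $i=5$ alone one can avoid the cocycle entirely, since $F_{5}\sharp O_{g}\cong-(F_{5}\sharp O_{g})^{*}$ would force a self-homeomorphism of $S^{3}$ carrying the genus-$(g+1)$ handlebody bounded on one side of $F_{5}\sharp O_{g}$ to the mirror image of the trefoil exterior boundary-summed with a handlebody, contradicting the uniqueness of the JSJ decomposition; but the cocycle argument is uniform in $i$ and is the one I would write.)
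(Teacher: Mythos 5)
Your overall strategy (a connected-sum formula for the cocycle invariant under $\sharp\, O_g$, reduction to a finite base-case computation, and Proposition~\ref{prop:minusinvers_localchainminus}) matches the paper's, but the precise formula you propose to prove is false, and it fails exactly at the item you flag as ``the main obstacle''. Attaching a handle at an arc colored by $(q,a)\in Q\times\Z_{k\type{Q_\Wai}}$ forces the condition $q\ast^{b}q=q$ on the label $b$ of the new handle arc, and the number of admissible $b$ depends on $\type q$. For the racks that make this invariant effective --- e.g.\ $Q=(R_3)^3$ with the parallelized operation --- $\type q$ is not constant: it equals $1$ on the diagonal $Q_1=\{(a,a,a)\}$ and $2$ elsewhere. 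Hence the number of colorings of the handle part extending a prescribed color on the connecting band genuinely depends on that color, and $\Phi_{\otilde\theta}(G\sharp O_g)$ is \emph{not} of the form $N_g\cdot\Phi_{\otilde\theta}(G)$. You can see this in the paper's final answer for $F_2$: the multiplicities $144(4^g+2^{g+1})$, $144\cdot 4^g$, $288(4^g+2^g)$ have $g$-dependent ratios, so no single scalar $N_g$ can work. (If you force homogeneity by taking $Q$ a quandle, then every $\type q=1$, but you thereby discard precisely the type stratification that gives the invariant its power here, and there is no evidence the base cases would still come out asymmetric under negation.)

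The paper repairs this by refining the invariant \emph{before} taking connected sums: it restricts to colorings with $\gcd c\in\{3,6\}$ (Theorem~\ref{thm:main2}) and partitions them according to the first coordinate $q=\operatorname{pr}_1\circ c(\alpha)$ of the color on the arc $\alpha$ where the handle is attached. On each stratum the handle contributes a \emph{constant} multiplier --- $4$ for $q\in Q_1$ and $2$ for $q\in Q_2$ --- and zero cocycle weight, yielding the stratified connected-sum formula \eqref{equation:claim}; iterating over $g$ handles gives multipliers $4^g$ and $2^g$ on the two strata, and the three base-case computations of $\Phi^{3,6}(F_i)^q_\alpha$ then close the argument. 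So the missing idea is not homogeneity but its opposite: track how the extension count varies with $\type q$ and organize the invariant so that this variation is computable. Your JSJ remark for $i=5$ is plausible but, as you say, not the argument you would write; for $i=2,3$ you need the cocycle, and as stated your connected-sum lemma cannot be proved.
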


\begin{proof}
Let  $Q=(R_3)^3$ be the rack defined by $$(a_1,a_2,a_3)\ast(b_1,b_2,b_3)=(a_1\ast b_1\ast b_2\ast b_3,\,a_2\ast b_1\ast b_2\ast b_3,\,a_3\ast b_1\ast b_2\ast b_3),$$
and let $R_3$ be the $Q$-set defined by $\upsilon\ast(b_1,b_2,b_3)=\upsilon\ast b_1\ast b_2\ast b_3$, where $\type Q_{R_3}=2$.
Set $Q_1=\left\{(0,0,0),(1,1,1),(2,2,2)\right\}\subset Q$ and $Q_2=Q\setminus Q_1$, then
\begin{equation}\label{eq:type1_is_3}
\text{$\type q_i=i$ for any $q_i\in Q_i$ ($i=1,2$).}
\end{equation}
Using Proposition~\ref{prop:quandle rack}, we have the shadow rack 2-cocycle $\theta:=\oline{\theta_3}$ of $C^*_\mathrm{R}(Q;\Z_3)_{R_3}$,
where $\theta_3$ is the Mochizuki's 3-cocycle.
Moreover, we obtain the shadow \mgr{} 2-cocycle $\otilde{\theta}$ of $C^*(Q\times\Z_6;\Z_3)_{R_3}$
by using Proposition~\ref{prop_main_mgr_ccycle}.
Suppose that $D$ is a \diag{} of a connected \oss{} $F$.
For $\alpha\in\mathcal A(D)$ and $q\in Q$, we put
\begin{align*}
\col^{3,6}(D)_\alpha^{q}&=\Set{c\in\col_{Q\times\Z_6}(D)_{R_3}|\gcd c\in\{3,6\},\,\operatorname{pr}_1\mathbin{\circ} c(\alpha)=q},\\
\Phi^{{3,6}}(F)_\alpha^{q}
&=\Set{\otilde\theta(W(D;c))\in\Phi_{\otilde\theta}(F)|c\in\col^{3,6}(D)_\alpha^q},
\end{align*}
where $\operatorname{pr}_1:Q\times \Z_{k\type Q}\to Q$ is the canonical projection.
We write
\[
\text{
$S\sqcup S'=
\left\{
{s_1}_{\ul{i_1}},\ldots,
{s_k}_{\ul{i_k+j_k}},\ldots,
{s_{\ell}}_{\ul{i_\ell+j_\ell}},\ldots,
{s_{m}}_{\ul{j_{m}}}
\right\}
$
and
$\bigsqcup_{i=1}^nS=\underbrace{S\sqcup\cdots\sqcup S}_{n}$
}
\]
for multisets
$S=\left\{{s_1}_{\ul{i_1}},\ldots,{s_k}_{\ul{i_k}},\ldots,{s_\ell}_{\ul{i_\ell}}\right\}$ and
$S'=\left\{{s_k}_{\ul{j_k}},\ldots,{s_\ell}_{\ul{j_\ell}},\ldots,{s_{m}}_{\ul{j_{m}}}\right\}$.
Here, $\sum_{q\in Q}\#\col^{3,6}(D)_\alpha^{q}$ and $\bigsqcup_{q\in Q}\Phi^{{3,6}}(F)_\alpha^{q}$ are invariants of $F$ (see~Theorem~\ref{thm:main2}).
\par
We show the following claim:
for any $\alpha\in\mathcal A(D)$,
\begin{equation}\label{equation:claim}
\Phi^{{3,6}}(F\sharp O_1)
=\left(\bigsqcup_{q_1\in Q_1}\bigsqcup_{i=1}^4\Phi^{{3,6}}(F)_\alpha^{q_1}\right)
\sqcup
 \left(\bigsqcup_{q_2\in Q_2}\bigsqcup_{i=1}^2\Phi^{{3,6}}(F)_\alpha^{q_2}\right).
\end{equation}
Let $D\sharp O_1$ be a \diag{} of $F\sharp O_1$ so that $D\sharp O_1$ divides into the $D$ part and a torus part, and that the torus part is connected to $\alpha$ as illustrated in Fig.~\ref{FIG_TABLE_DIAGRAM_MIMI}.
Let $c_\alpha^q\in\col^{3,6}(D)_\alpha^q$ and ${c_1}_\alpha^q\in\col^{3,6}(D\sharp O_1)_\alpha^q$ so that $D$ part of ${c_1}_\alpha^q$ is the same as $c_\alpha^q$.
The coloring condition of $D\sharp O_1$ is obtained from that of $D$ by adding the equality $q*^bq=q$ at the crossing $\chi$ in Fig.~\ref{FIG_TABLE_DIAGRAM_MIMI}.
The equality is satisfied if and only if $b\in\Set{k\in\{3,6\}|\text{$\type q$ divides $k$}}$.
Then $\#\col^{3,6}(D\sharp O_1)_\alpha^q=2\cdot\#\Set{k\in\{3,6\}|\text{$\type q$ divides $k$}}\cdot\#\col^{3,6}(D)_\alpha^q$.
By \eqref{eq:type1_is_3}, we obtain
\[
\#\col^{{3,6}}(D\sharp O_1)_\alpha^{q}= 
\begin{cases}
4\cdot\#\col^{{3,6}}(D)_\alpha^{q}&(q\in Q_1),\\
2\cdot\#\col^{{3,6}}(D)_\alpha^{q}&(q\in Q_2).
\end{cases}
\]
Since $\otilde\theta_2(\lr{\upsilon}\lr{(q,a)}\lr{(q,b)})=0$ when $q*^bq=q$,
we have $\otilde\theta_2(W(D\sharp O_1;{c_1}_\alpha^q))=\otilde\theta_2(W(D;{c}_\alpha^q))$, and the claim \eqref{equation:claim} follows.
\par
Let $D_i$ and $\alpha$ be the \diag{} of $F_i$ and the arc as illustrated in Fig.~\ref{FIG_TABLE_DIAGRAM} ($i=2,3,5$).
We obtain
\[
\Phi^{{3,6}}(F_2)_\alpha^{q}= 
\begin{cases}
\left\{0_{\ul{48}},1_{\ul{48}},2_{\ul{96}}\right\}&(q\in Q_1),\\
\left\{0_{\ul{12}},\phantom{,1_{48}}2_{\ul{96}}\right \}       &(q\in Q_2).
\end{cases}
\]
By \eqref{eq:type1_is_3} and \eqref{equation:claim}, we have inductively
\begin{align*}
\Phi^{3,6}(F_2\sharp O_g)
&=\bigsqcup_{i=1}^{3\cdot4^g}\left\{0_{\ul{48}},1_{\ul{48}},2_{\ul{96}}\right\}\sqcup\bigsqcup_{i=1}^{24\cdot2^g}\left\{0_{\ul{12}},2_{\ul{96}}\right\} \\
&=\left\{0_{\ul{144(4^g+2^{g+1})}},1_{\ul{144\cdot 4^g}},2_{\ul{288(4^g+2^g)}}\right\}.
\end{align*}
Then $\Phi^{3,6}(-(F_2\sharp O_g)^*)=-\Phi^{3,6}(F_2\sharp O_g)=\left\{0_{\ul{144(4^g+2^{g+1})}},1_{\ul{288(4^g+2^g)}},2_{\ul{144\cdot 4^g}}\right\}$ by Proposition~\ref{prop:minusinvers_localchainminus}.
Since $144\cdot 4^g\ne 288(4^g+2^g)$ for any $g$,
it holds that $F_2\sharp O_g\not\cong-(F_2\sharp O_g)^*$.
\par
In the same manner, since
\[
\Phi^{{3,6}}(F_3)_\alpha^{q}= 
\begin{cases}
\left\{0_{\ul{96}},1_{\ul{48}},2_{\ul{48}}\right\}&(q\in Q_1),\\
\left\{0_{\ul{12}},1_{\ul{12}}\phantom{,2_{48}}\right\}       &(q\in Q_2),
\end{cases}
\quad
\Phi^{{3,6}}(F_5)_\alpha^{q}= 
\begin{cases}
\left\{0_{\ul{48}},1_{\ul{48}},2_{\ul{96}}\right\}            &(q\in Q_1),\\
\left\{0_{\ul{12}},\phantom{1_{\ul{48}},}\,2_{\ul{96}}\right\}&(q\in Q_2),
\end{cases}
\]
then we have
$\Phi^{3,6}(F_3\sharp O_g)
= \left\{0_{\ul{288(4^g+2^g)}},
     1_{\ul{144(4^g+2^{g+1})}},
2_{\ul{144\cdot 4^g}}\right\}$ and
$\Phi^{3,6}(F_5\sharp O_g)
=\left\{0_{\ul{144(4^g+2^{g+1})}},
2_{\ul{144\cdot 4^g}}\right\}$.
Hence, $F_i\sharp O_g\not\cong-(F_i\sharp O_g)^*$ ($i=3,5$).
\end{proof}
		\begin{figure}[htbp]\centerline{
		\includegraphics{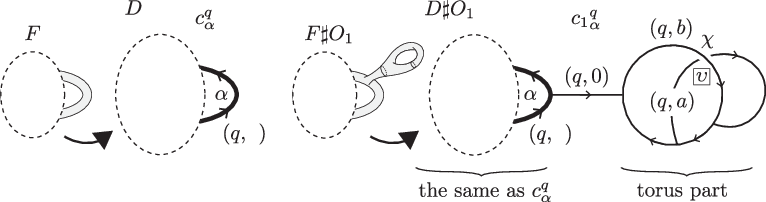}}
		\caption{Diagrams $D$ and $D\sharp O_1$ of $F$ and $F\sharp O_1$, and their colorings $c_\alpha^q$ and ${c_1}_\alpha^q$ by the \mgr{} $Q\times\Z_6$, respectively.}
		\label{FIG_TABLE_DIAGRAM_MIMI}
		\end{figure}

		\begin{figure}[htbp]\centerline{
		\includegraphics{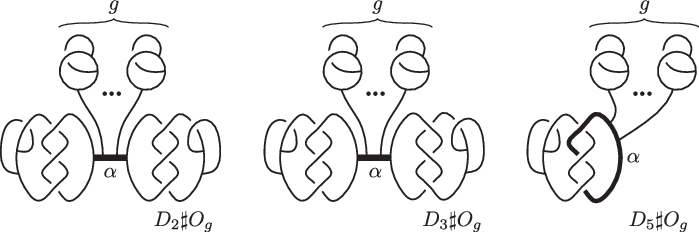}}
		\caption{A diagram $D_i\sharp O_g$ of $F_i\sharp O_g$ ($i=2,3,5$),
		whose Y-orientation is omitted for simplicity.}
		\label{FIG_TABLE_DIAGRAM}
		\end{figure}

\begin{proposition}\label{proposition:table}
For any $g\ge2$, there are genus $g$ closed \oss s of the five symmetric types, respectively.
Actually, we have the following table.
\begin{tabular}{r|c|c|c|c|c}\hline
 {\small genus}
&{\small fully amphicheiral}
&{\small reversible}
&{\small positive amphicheiral}
&{\small negative amphicheiral}
&{\small fully chiral}\\
\hline\hline
 {\small 1}
&$O_1$ only
&none
&none
&$F_4$
&$F_5$\\
\hline
 {\small $g\ge2$}
&$F_1\sharp O_{g-2}$
&$F_2\sharp O_{g-2}$
&$F_3\sharp O_{g-2}$
&$F_4\sharp O_{g-1}$
&$F_5\sharp O_{g-1}$\\
\hline
\end{tabular}
\end{proposition}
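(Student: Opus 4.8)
The five symmetry types are distinguished purely by the coincidence pattern among $F$, $-F$, $F^*$, $-F^*$; since mirroring and orientation reversal commute and are involutions, it suffices to verify, for each of the five families $F_1\sharp O_{g-2},\dots,F_5\sharp O_{g-1}$, the pattern claimed in the table. Each such surface is connected, closed and of genus $g$ (the genus is additive under $\sharp$, and $\operatorname{genus}F_1=\operatorname{genus}F_2=\operatorname{genus}F_3=2$, $\operatorname{genus}F_4=\operatorname{genus}F_5=1$; for $g=2$ one has $O_{g-2}=O_0=S^2$ and $F_i\sharp O_0\cong F_i$). The plan is to obtain every \emph{equivalence} in the table formally --- from commutativity and associativity of $\sharp$, from $(A\sharp B)^*\cong A^*\sharp B^*$ and $-(A\sharp B)\cong(-A)\sharp(-B)$, from the facts recorded in Example~\ref{example:prime} (notably $4_1\cong-4_1^*$, equivalently $4_1^*\cong-4_1$), and from $O_k$ being fully amphicheiral --- and to obtain every \emph{inequivalence} either from the topology of the complement or from the shadow \mgr{} cocycle invariant of Lemma~\ref{essential_lemma}.

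For the families $F_1\sharp O_{g-2},F_2\sharp O_{g-2},F_3\sharp O_{g-2}$ I would argue as follows. For $F_1=4_1\sharp(-4_1)$: commutativity gives $-F_1=(-4_1)\sharp4_1\cong F_1$ and $F_1^*=4_1^*\sharp(-4_1)^*\cong(-4_1)\sharp4_1\cong F_1$, so $F_1$, hence $F_1\sharp O_{g-2}$, is fully amphicheiral. For $F_2=3_1\sharp(-3_1)$: commutativity gives $-F_2\cong F_2$, so $-(F_2\sharp O_{g-2})\cong F_2\sharp O_{g-2}$, and applying $*$ yields $(F_2\sharp O_{g-2})^*\cong-(F_2\sharp O_{g-2})^*$; Lemma~\ref{essential_lemma} ($i=2$, parameter $g-2\ge0$) then gives $F_2\sharp O_{g-2}\not\cong-(F_2\sharp O_{g-2})^*\cong(F_2\sharp O_{g-2})^*$, so the surface is reversible. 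Dually, for $F_3=3_1\sharp3_1^*$: commutativity gives $F_3^*\cong F_3$, so $(F_3\sharp O_{g-2})^*\cong F_3\sharp O_{g-2}$, and applying $-$ yields $-(F_3\sharp O_{g-2})\cong-(F_3\sharp O_{g-2})^*$; Lemma~\ref{essential_lemma} ($i=3$) gives $F_3\sharp O_{g-2}\not\cong-(F_3\sharp O_{g-2})$, so the surface is positive amphicheiral. Here no obstruction comes from the complement: each of these three surfaces has two \emph{homeomorphic} complementary regions in $S^3$ (each a boundary-connected sum of a knot exterior with a handlebody), so the cocycle invariant is genuinely needed for the inequivalences.

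For the families $F_4\sharp O_{g-1}$ and $F_5\sharp O_{g-1}$ the complement does the main work. With $K$ the figure-eight knot ($i=4$) or the right-handed trefoil ($i=5$) and $H$ a genus-$(g-1)$ handlebody, the two regions complementary to $F_i\sharp O_{g-1}$ in $S^3$ are $N(K)\natural H$ --- a genus-$g$ handlebody --- and $E(K)\natural H$, whose fundamental group is a free product of $\pi_1(E(K))$ with a free group and is therefore not free, so the two regions are not homeomorphic. Since an orientation-preserving self-homeomorphism of $S^3$ realizing $F\cong-F$ or $F\cong F^*$ must interchange the two complementary regions --- this is the explicit form of the genus-$1$ remark in Example~\ref{example:prime} --- we get $F_i\sharp O_{g-1}\not\cong-(F_i\sharp O_{g-1})$ and $F_i\sharp O_{g-1}\not\cong(F_i\sharp O_{g-1})^*$ for $i=4,5$. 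For $i=4$, combining $4_1\cong-4_1^*$ with the full amphicheirality of $O_{g-1}$ gives $F_4\sharp O_{g-1}\cong-(F_4\sharp O_{g-1})^*$, so $F_4\sharp O_{g-1}$ is negative amphicheiral. For $i=5$, Lemma~\ref{essential_lemma} ($i=5$, parameter $g-1\ge0$) supplies the last missing relation $F_5\sharp O_{g-1}\not\cong-(F_5\sharp O_{g-1})^*$, so $F$, $-F$, $F^*$, $-F^*$ are pairwise inequivalent and $F_5\sharp O_{g-1}$ is fully chiral. (The genus-$1$ row of the table is then Example~\ref{example:prime}; the same genus-$1$ remark shows that ``reversible'' and ``positive amphicheiral'' are empty in genus $1$, since a non-standardly embedded closed genus-$1$ \oss{} satisfies $-F\not\cong F\not\cong F^*$, while the only genus-$1$ closed \oss{} with $-F\cong F$ is $O_1$.)

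The hard step is the fully chiral case. The complement is powerless to separate $F_5\sharp O_{g-1}$ from its mirror image, since the trefoil and its mirror have homeomorphic exteriors, and the orientation/handlebody argument yields only $F\not\cong-F$ and $F\not\cong F^*$; the delicate relation $F_5\sharp O_{g-1}\not\cong-(F_5\sharp O_{g-1})^*$ must come from the explicit evaluation of $\Phi^{3,6}$ in the proof of Lemma~\ref{essential_lemma}, which in turn rests on Proposition~\ref{prop_main_mgr_ccycle}, Proposition~\ref{prop:quandle rack}, Mochizuki's $3$-cocycle, and a sizeable machine-assisted state-sum computation. A secondary technical point is to justify, uniformly in $g$, that $E(K)\natural H$ is not a handlebody --- the genus-$1$ instance is the one quoted in Example~\ref{example:prime}, and the general case follows from the free-product description of $\pi_1$.
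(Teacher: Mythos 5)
Your proposal is correct and follows essentially the same route as the paper: the equivalences come formally from commutativity of $\sharp$, the facts in Example~\ref{example:prime} (in particular $4_1^*\cong-4_1$), and full amphicheirality of $O_k$; the inequivalences $F\not\cong-F$ and $F\not\cong F^*$ for $i=4,5$ come from the handlebody/non-handlebody dichotomy of the two complementary regions; and the remaining inequivalences $F\not\cong-F^*$ for $i=2,3,5$ are exactly what Lemma~\ref{essential_lemma} supplies via the cocycle invariant. Your slightly more explicit justification of the complement argument (free product of $\pi_1$, side-swapping under an equivalence realizing $F\cong-F$ or $F\cong F^*$) matches what the paper states parenthetically via Seifert--van Kampen and Grushko.
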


\begin{proof}
Any \oss{} $F$ with genus $g\ge2$ in the table is not standardly embedded, since $F$ bounds a 3-manifold which is not a handlebody. (For example, it follows from the Seifert-van Kampen theorem and the Grushko decomposition theorem.)
\par
As seen in Example~\ref{example:prime}, $F_4$ (resp.~$F_5$) is negative amphicheiral (resp.~fully chiral). 
If a closed \oss{} $F$ with genus 1 is not standardly embedded, then $-F\not\cong F\not\cong F^*$, that is, $F$ is not fully amphicheiral, not reversible and not positive amphicheiral.
Then the only $O_1$ is fully amphicheiral.
\par
Suppose that $g\ge0$, hereafter.
Since 
$-(F_1   \sharp O_{g})\cong(-4_1)\sharp4_1\sharp O_{g}\cong F_1\sharp O_{g}$ and
$(F_1\sharp O_{g})^*\cong4_1^* \sharp(-4_1)^*\sharp O_{g}\cong (-4_1) \sharp4_1\sharp O_{g}\cong F_1\sharp O_{g}$ (see~Fig.~\ref{FIG:prime_surfaces}),
then $F_1\sharp O_{g}$ is fully amphicheiral.
Since $-(F_2\sharp O_g)\cong (-3_1)\sharp 3_1 \sharp O_g\cong F_2\sharp O_g$ and $F_2\sharp O_g\not\cong-(F_2\sharp O_g)^*$ by Lemma \ref{essential_lemma},
then $F_2\sharp O_g$ is reversible.
Since $(F_3\sharp O_g)^*\cong 3_1^*\sharp 3_1\sharp O_g\cong F_3\sharp O_g$ and $F_3\sharp O_g\not\cong-(F_3\sharp O_g)^*$ by Lemma \ref{essential_lemma},
then $F_3\sharp O_g$ is positive amphicheiral.
The negative amphicheirality of $F_4\sharp O_g$ follows from the negative amphicheirality of $4_1$.
The fully chirality of $F_5\sharp O_g$ follows from the fully chirality of $3_1$ and Lemma~\ref{essential_lemma}.
Here, we note that $-(F_i\sharp O_g)\not\cong F_i\sharp O_g\not\cong (F_i\sharp O_g)^*$ ($i=4,5$) because $F_i\sharp O_g$ bounds both a handlebody and a 3-manifold which is not a handlebody in $S^3$.
\end{proof}

\section*{Acknowledgement}
The second author was supported by JSPS KAKENHI Grant Numbers JP20K22312 and JP21K13796.

\end{document}